\newtheorem{theorem}{Theorem}[section]
\newtheorem{lemma}[theorem]{Lemma}
\newtheorem{corollary}[theorem]{Corollary}
\newtheorem{proposition}[theorem]{Proposition}
\newtheorem{claim}{Claim}
\theoremstyle{definition}
\newtheorem{definition}[theorem]{Definition}
\newtheorem{remark}[theorem]{Remark}
\newtheorem{example}[theorem]{Example}
\newtheorem{notation}{Notation}
\newcommand{\free}{{\rm Free}}
\newcommand{\bF}{{\bf F}}
\newcommand{\cleq}{{\leq}}
\newcommand{\cgeq}{{\geq}}
\begin{document}

\title{Algebras and relational frames for G\"odel modal logic and some of its extensions}
\author{Tommaso Flaminio$^1$,  Lluis Godo$^1$, \\ Paula Mench\'on$^2$,  Ricardo O. Rodriguez$^3$\\
{\small $^1$ Artificial Intelligence Research Institute (IIIA - CSIC). Barcelona, Spain. }\\
{\small{\tt \{tommaso,godo\}@iiia.csic.es}}\\
{\small $^2$ CONICET, Univ. Nacional del Centro de la Provincia de Buenos Aires. Tandil, Argentina}\\
{\small {\tt paulamenchon@gmail.com}}\\
{\small $^3$UBA-FCEyN, Departamento de Computaci\'on,}\\
{\small CONICET-UBA, Inst. de Invest. en Cs. de la Computaci\'on. Buenos Aires, Argentina.}\\
{\small {\tt ricardo@dc.uba.ar}}}
\date{}

\maketitle


\begin{abstract}
G\"odel modal logics can be seen as extenions of intutionistic modal logics with the prelinearity axiom. In this paper we focus on the algebraic and relational semantics for G\"odel modal logics that leverages on the duality between finite G\"odel algebras and finite forests, i.e. finite posets whose principal downsets are totally ordered. We consider different subvarieties of the basic variety $\mathbb{GAO}$ of G\"odel algebras with two modal operators (GAOs for short) and their corresponding classes of forest frames, either with one or two accessibility relations. These relational structures can be considered as prelinear versions of the usual relational semantics of intuitionistic modal logic.  More precisely we consider two main extensions of finite G\"odel algebras with operators:  the one obtained by adding Dunn axioms, typically studied in the fragment of positive classical (and intuitionistic) logic,  and the one determined by adding Fischer Servi axioms. We present J\'onsson-Tarski like representation theorems for the different types of finite GAOs considered in the paper. 
\end{abstract}

\section{Introduction}
Extending modal logics to a non-classical propositional ground has been, and still is, a fruitful research line that encompasses several approaches, ideas and methods. In the last years, this topic  has significantly impacted on the community of many-valued and mathematical fuzzy logic that have proposed ways to expand fuzzy logics (t-norm based fuzzy logics, in the terminology of H\'ajek \cite{H98}) by modal operators so as to capture modes of truth that can be faithfully described as ``graded''. 

In this line, one of the fuzzy logics that has been an object of major interest without any doubt is the so called {\em G\"odel logic}, i.e., the axiomatic extension of intuitionistic propositional calculus given by the {\em prelinearity axiom}: $(\varphi\to \psi)\vee(\psi\to \varphi)$. As first observed by Horn in \cite{Horn}, prelinearity implies completeness of G\"odel logic with respect to totally ordered Heyting algebras, i.e., {\em G\"odel chains}. Indeed, prelinear Heyting algebras form a proper subvariety of that of Heyting algebras, usually called the variety of G\"odel algebras and denoted  $\mathbb{G}$ whose subdirectly irreducible elements are totally ordered. Furthermore, in contrast with the intuitionistic case, $\mathbb{G}$ is locally finite, whence the finitely generated free G\"odel algebras are finite.

Modal extensions of G\"odel logic have been intensively discussed in the literature \cite{CMRR,XARO,RodVid}. Following the usual methodological and philosophical approach to fuzzy logic, they have been mainly approached semantically by generalizing the classical definition of Kripke model $\langle W, R, e\rangle$ by allowing both the evaluation $e$ of (modal) formulas and the accessibility relation $R$ to range over a G\"odel algebra, rather than the classical two-valued set $\{0,1\}$ (see \cite{BEGR} for a general approach).  More precisely, a model of this kind, besides evaluating formulas in a more general structure than the classical two-element boolean algebra, regards the accessibility relation $R$ as a function from the cartesian product $W\times W$ to a Godel algebra ${\bf A}$ so that, for all $w,w'\in W$, $R(w,w')=a\in A$ means that $a$ is the {\em degree of accessibility} of $w'$ from $w$. 

In this chapter we will put forward a novel approach to G\"odel modal logic that leverages on the duality between finite G\"odel algebras and finite forests. This approach, preliminary presented in \cite{FGR}, will be deepened and extended in the present paper. In fact, 
since G\"odel algebras are nothing but prelinear Heyting algebras, and their lattice reducts are distributive lattices, there are a number of previous  works  in the literature, both for distributive modal algebras (see e.g. 
\cite{Viorica00, Viorica00-2, Goldblatt, Cel06, Cel08, Petrovich}) and for Heying modal algebras or modal intuitionistic logics (see e.g. 
\cite{Ono, FS, Sotirov, BoDo, Dosen, PlotStir, Dunn, Cel01, Ale, Orlo, CelJan}), from which many results can be adapted to our setting. 

In particular, we will focus on G\"odel modal algebras and their dual structures, that is, the prime spectra of G\"odel algebras ordered by reverse-inclusion. These ordered structures can be regarded as the prelinear version of posets and they are known in the literature as {\em forests}: posets whose principal downsets are totally ordered.  The algebras we will consider 
form a variety  denoted by $\mathbb{GAO}$ for {\em G\"odel algebras with operators}.  Hence, the algebras we are concerned with are those belonging to the finite slice of $\mathbb{GAO}$. The associated relational structures based on forests, as we briefly recalled above, might hence be regarded as the prelinear version of the usual relational semantics of intuitionistic modal logic. Accessibility relations $R_\Box$ and $R_\Diamond$ on finite forests are defined, in our frames, by ad hoc properties  that we  express in terms of (anti)monotonicity on the first argument of the relations themselves. These relational frames will be called {\em forest frames}.

In this chapter we will be mainly concerned with J\'onsson-Tarski like representation theorems for  G\"odel algebras with operators (and some of their extensions) as already done for the Boolean case \cite{JT61,Lemmon}, the Heyting case \cite{Ale, Orlo} and the case of their common positive fragment \cite{CelJan}. It is worth noticing that, although J\'onsson-Tarski like representation theorems for  modal algebras might be proved without making an explicit reference to the relational frames that determine the isomorphic copy of the starting algebra (see for instance \cite{Lemmon,BoDo,JT61}), in the more recent papers \cite{Ale,Orlo}, these relational frames are explicitly used in the proof of such theorems. In this chapter we will follow this latter approach as it will give us also the opportunity of pointing out which relational frames are more or less general  in a sense that will be made clear in Section \ref{sec:tworelations}.
Along the whole chapter, we will take care of comparing our approach to GAOs with the ones that underly studies on minimal intuitionistic modal logics, in particular those developed by Bo\v{z}i\'c and Do\v{s}en in \cite{BoDo}, that later reappear  in the work of 
Palmigiano \cite{Ale} and Or{\l}owska and Rewitzky  \cite{Orlo}.  Note that in this chapter we will not deal with logic at all.


More in detail, we will observe that, if we start from any G\"odel algebra with operators $({\bf A}, \Box,\Diamond)$, its associated forest frame $({\bf F}_{\bf A}, R_\Box, R_\Diamond)$ allows to construct another algebraic structure $({\bf G}(\bF_{\bf A}), \beta_\Box, \delta_\Diamond)$ isomorphic to the starting one. Interestingly, the forest frame $({\bf F}_{\bf A}, R_\Box, R_\Diamond)$ is not the unique one that reconstructs $({\bf A}, \Box, \Diamond)$ up to isomorphism. Indeed, as we will show in Section \ref{sec:tworelations}, for every G\"odel algebra with operators $({\bf A}, \Box,\Diamond)$, there are not isomorphic forest frames, Palmigiano-like and Or{\l}owska and Rewitzky-like frames that determine the same original modal algebra $({\bf A}, \Box, \Diamond)$ up to isomorphism. 

In Section \ref{GAO} we will start by considering the most general way to define the operators $\Box$ and $\Diamond$ on G\"odel algebras while in Section \ref{sec:tworelations} we investigate the relational structures corresponding to the resulting algebraic structures. In Section \ref{extensions} we will focus on particular and well-known extensions. Precisely we will consider two main extensions of G\"odel algebras with operators: (1) the first one is obtained by adding the Dunn axioms, typically studied in the fragment of positive classical (and intuitionistic) logic \cite{Dunn,CelJan}; (2) the second one is determined by  adding the Fischer Servi axioms \cite{FS}. From the algebraic perspective, adding these  identities to G\"odel algebras with operators identifies two proper subvarieties of $\mathbb{GAO}$ that will be respectively denoted by $\mathbb{DGAO}$ and $\mathbb{FSGAO}$. 
Section \ref{extensions} is hence complemented by some examples that showing that $\mathbb{DGAO}$ and $\mathbb{FSGAO}$ can be distinguished.  

In contrast with the case of general G\"odel algebras with operators discussed in Sections \ref{GAO} and \ref{sec:tworelations} whose relational structures need two independent relations to treat the modal operators, the structures belonging to $\mathbb{DGAO}$ and $\mathbb{FSGAO}$ only need, for their J\'onnson-Tarski like representation, frames with only one accessibility relation. Forest frames with one relation are hence studied in Section \ref{one-relation} where, in addition to a comparison with the usual intuitionistic case, we will also study in detail the relational structures corresponding to two further subvarieties of $\mathbb{GAO}$. The first one is the variety $\mathbb{FSDGAO}$ obtained as the intersection  $\mathbb{DGAO}\cap\mathbb{FSGAO}$. The algebras belonging to such variety have been called {\em bi-modal G\"odel algebras} in \cite{XARO}.
The second subvariety that we will consider in Section \ref{MDGAO} is another refinement of $\mathbb{DGAO}$ and it will be denoted by $\mathbb{WGAO}$. Algebras in this class are characterized by the requirement that $\Box a$ and $\Diamond a$ are Boolean for of each element $a$. A final proposition will make clear the inclusions between subvarieties of $\mathbb{GAO}$ studied in this paper.
 
 Next section on preliminaries is devoted to introduce the basics of finite G\"odel algebras and their dual structures of finite forests. 

\section{Preliminaries: G\"odel algebras and forests}\label{sec:algandFortests}
G\"odel algebras, the algebraic semantics of infinite-valued G\"odel logic \cite{H98}, are    idempotent, bounded, integral, commutative residuated lattices of the form ${\bf A}=(A, \wedge,\vee, \to, \bot,\top)$ satisfying the prelinearity equation: $(a\to b)\vee(b\to a)=\top$. In other words, G\"odel algebras are {\em prelinear Heyting algebras}.

Let us recall that  the prelinearity equation has a twofold effect on Heyting algebras. Indeed, from the logical side it makes G\"odel logic to be sound and complete w.r.t. totally ordered truth-value scales and hence it properly presents it as a fuzzy logic in the sense of \cite{H98}. Also, from the universal algebraic perspective, it makes the variety of G\"odel algebras locally finite, whereas as it is well-known, Heyting algebras are not. This latter observation is particularly important for us because it will allows us to provide relevant examples of G\"odel algebras with operators based on finite freely generated structures. This latter fact, for clear reasons, is not possible in the Heyting realm. In fact, all algebras we will consider in this paper will be assumed to be finite.

Another important feature of G\"odel logic, that spotlights a nice behavior compared to other well-known many-valued logics, follows from an observation made by Takeuti and Titani \cite{Take} that characterises G\"odel implication operator  $\to$ as that unique truth-function on $[0,1]$ satisfying natural properties relating $\to$ with the order of $[0,1]$ and, very importantly, satisfying the classical deduction theorem. See \cite{Take} and \cite{BaazPre} for more insights on the subject.

Let ${\bf A}$ be a G\"odel algebra. A non-empty subset $f$ of ${\bf A}$ is said to be a {\em filter} provided that: (1) $\top\in f$, (2) if $x,y\in f$, then $x\wedge y\in f$, (3) if $x\in f$ and $y\geq x$ then $y\in f$. A filter $f\neq A$ (that is a {\em proper} filter) is said to be {\em prime} if $x\vee y\in f$ implies that either $x\in f$ or $y\in f$. A filter $f$ is {\em principal} (or {\em principally generated}) if there exists an element $x\in A$ such that $f={\uparrow}x=\{y\in A\mid y\geq x\}$. By a standard result \cite[]{}, in every finite G\"odel algebra prime filters coincide with those filters principally generated by the join-irreducible elements of $A$.

 A non-empty subset  $h$ of $A$ is said to be a {\em co-filter}, if (1) if $x\in h$ and $y\geq x$ implies $y\in h$ and (2) $x\vee y\in f$ implies that either $x\in h$ or $y\in h$. Therefore a subset $f$ of $A$ is a prime filter iff it is both a filter and a co-filter.

 A non-empty subset $i$ of $A$ is an {\em ideal} provided that: (1) $i$ is downward closed and such that, if $x, y\in i$ then $x\vee y\in i$. It is easy to see that the set-theoretical complement of a proper co-filter is an ideal.

Let ${\bf A}$ be a finite G\"odel algebra and denote by $F_{\bf A}$ the finite set of its prime filters. Unlike the case of boolean algebras, prime and maximal filters are not the same for G\"odel algebras and indeed $F_{\bf A}$ can be ordered in a nontrivial way. In particular, if for $f_1, f_2\in F_{\bf A}$ we define $f_1\leq f_2$ iff (as prime filters) $f_1\supseteq f_2$,  ${\bf F}_{\bf A}=(F_{\bf A}, \leq)$ turns out to be a finite {\em forest}, i.e., a poset such that the downset of each element is totally ordered.

Finite forests  play a crucial role in the theory of finite G\"odel algebras. Indeed, let ${\bf F}=(F,\leq)$ be a finite forest,  $G(\bF)$ be the set of all downward closed subsets of $F$ (i.e., the {\em subforests} of ${\bf F}$) and consider the following operations on $G(\bF)$: for all $x,y\in F$,
\begin{itemize}
\item[1.] $x\wedge y= x\cap y$ (the set-theoretic intersection);
\item[2.] $x\vee y= x\cup y$ (the set-theoretic union);
\item[3.] $x\to y= ({\uparrow}(x\setminus y))^c=F\setminus {\uparrow}(x\setminus y)$, where $\setminus$ denotes the set-theoretical difference, for every $z\in F$, ${\uparrow}z=\{k\in F\mid k\geq z\}$ and $^c$ denotes the set-theoretical complement.\footnote{Without danger of confusion, and thanks  to the following result, we will not distinguish the symbols of a G\"odel algebra ${\bf A}$ from those of ${\bf G}({\bF})$} 
\end{itemize}
The algebra ${\bf G}(\bF)=(G(\bF), \wedge, \vee, \to, \emptyset, F)$ is a G\"odel algebra \cite[\S4.2]{ABG} 
 and  the following Stone-like representation theorem holds.

\begin{lemma}[{\cite[Theorem 4.2.1]{ABG}}]\label{lemma1}
Every finite  G\"odel algebra ${\bf A}$ is isomorphic to ${\bf G}(\bF_{\bf A})$ through the map $r: {\bf A}\to {\bf G}(\bF_{\bf A})$
$$
r: a\in A\mapsto\{f\in F_{\bf A}\mid a\in f\}.
$$
\end{lemma}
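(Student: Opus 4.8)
The plan is to verify that $r$ is a well-defined homomorphism of G\"odel algebras and then that it is bijective, exploiting throughout the fact recalled above that in a finite G\"odel algebra the prime filters are exactly the principal filters ${\uparrow}p$ generated by the join-irreducible elements $p$ of ${\bf A}$. First I would check that $r$ lands in $G(\bF_{\bf A})$, i.e. that each $r(a)$ is a downset of $\bF_{\bf A}$: if $f_2\in r(a)$ and $f_1\leq f_2$, then by definition of the forest order $f_1\supseteq f_2$, so $a\in f_2\subseteq f_1$ and hence $f_1\in r(a)$, which shows $r(a)$ is downward closed.

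Next I would show that $r$ preserves the operations. The bounds are immediate, since $\top$ lies in every prime filter and $\bot$ in none, so $r(\top)=F_{\bf A}$ and $r(\bot)=\emptyset$. For meet and join, the identity $r(a\wedge b)=r(a)\cap r(b)$ is just the statement that a filter is closed under meets and upward closed, while $r(a\vee b)=r(a)\cup r(b)$ is exactly the primeness condition. The only real work is the implication, where I must prove $r(a\to b)=r(a)\to r(b)=F_{\bf A}\setminus{\uparrow}(r(a)\setminus r(b))$. Unwinding the forest order, the right-hand side consists of those $f$ for which no prime filter $g\supseteq f$ satisfies $a\in g$ and $b\notin g$.

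I expect the implication step to be the main obstacle. For the inclusion $\subseteq$, assume $a\to b\in f$ and suppose toward a contradiction that some $g\supseteq f$ has $a\in g$ and $b\notin g$; then $a\to b\in g$ as well, and since $a\wedge(a\to b)\leq b$ in any Heyting algebra, the filter properties of $g$ force $b\in g$, a contradiction. For the reverse inclusion I would argue contrapositively: if $a\to b\notin f$, I construct a witness $g$. Let $f'$ be the filter generated by $f\cup\{a\}$; by residuation, $b\in f'$ would give $c\wedge a\leq b$ for some $c\in f$, i.e. $c\leq a\to b$ and hence $a\to b\in f$, contrary to assumption, so $b\notin f'$. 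Writing $f'={\uparrow}c$ with $c=\bigwedge f'$ (possible since $f'$ is a finite filter), the condition $b\notin f'$ reads $c\not\leq b$, and finiteness together with distributivity yield a join-irreducible $p\leq c$ with $p\not\leq b$; otherwise $c$, being the join of the join-irreducibles below it, would lie under $b$. Then $g={\uparrow}p$ is a prime filter with $f\subseteq f'\subseteq g$, $a\in g$ and $b\notin g$, so $f\notin r(a)\to r(b)$, as required.

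Finally, for bijectivity I would use the correspondence $p\mapsto{\uparrow}p$ between join-irreducibles and prime filters, which is a poset isomorphism from $J({\bf A})$ to $\bF_{\bf A}$, since ${\uparrow}p\supseteq{\uparrow}q$ iff $p\leq q$ matches the reverse-inclusion order defining $\leq$. Under this identification $r(a)$ becomes the downset ${\downarrow}a\cap J({\bf A})$ of the join-irreducibles below $a$, so $r$ coincides with the Birkhoff representation of the finite distributive lattice reduct of ${\bf A}$; injectivity and surjectivity onto $G(\bF_{\bf A})$ then follow from Birkhoff's theorem for finite distributive lattices. Alternatively, one can argue injectivity directly (if $a\neq b$, say $a\not\leq b$, prime filter separation yields $f$ with $a\in f$ and $b\notin f$, whence $r(a)\neq r(b)$) and surjectivity by the same join-irreducible description of the sets $r(a)$.
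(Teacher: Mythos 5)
Your proof is correct, but note that the paper itself contains no proof of this lemma to compare against: it is imported verbatim from the literature (Theorem 4.2.1 of the Handbook chapter by Aguzzoli, Bova and Gerla, cited as \cite{ABG}). Your argument fills that gap with what is essentially the standard proof. The lattice-theoretic part (well-definedness of $r$ as a map into downsets, preservation of bounds, meet and join, and bijectivity) is exactly Birkhoff's representation of the finite distributive lattice reduct, transported along the order isomorphism $p\mapsto{\uparrow}p$ between join-irreducibles and prime filters --- an identification the paper itself recalls in its preliminaries, so you are entitled to it. The only genuinely Heyting-specific step is the implication, and both of your inclusions are sound: the forward one is modus ponens inside a prime filter extending $f$, and the converse correctly manufactures the separating prime filter ${\uparrow}p$ from the filter generated by $f\cup\{a\}$, using that in a finite algebra every filter is principal and every element is the join of the join-irreducibles below it. One small remark: once $r$ is known to be a lattice isomorphism between two Heyting algebras, preservation of $\to$ is automatic, since the residuum is determined by the lattice order; so your explicit computation of $r(a\to b)$ is best read as verifying that the operation $x\to y=F\setminus{\uparrow}(x\setminus y)$ defined on $G(\bF_{\bf A})$ in the preliminaries really is the Heyting residuum of that lattice --- a point worth keeping explicit, since the lemma asserts an isomorphism onto that specific algebra.
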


\begin{example}\label{example1}
Let $\free_1$ be the 1-generated free G\"odel algebra  (Fig. \ref{fig1}). Its prime filters, which are all principally generated as upsets of its join-irreducible elements, are $f_1=\{y\in \free_1\mid y\geq x\} = \{x, x \lor \neg x, \neg\neg x, \top\}$, $f_2=\{y\in \free_1\mid y\geq \neg x\} =\{ \neg x, x \lor \neg x, \top\}$, and $f_3=\{y\in \free_1\mid y\geq \neg \neg x\} = \{\neg\neg x, \top\}$. The forest $\bF_{\free_1}$ is obtained by ordering $\{f_1, f_2, f_3\}$ by reverse inclusion.

Let us consider the set $G({\bF_{\free_1}})$ of subforests of $\bF_{\free_1}$:
$$
G({\bF_{\free_1}})=\{\emptyset, F_{\free_1}, \{f_2\}, \{f_1\}, \{f_2, f_1\}, \{f_3, f_1\}\}
$$
with operations $\wedge, \vee, \to$ as in (1-3) above. Lemma \ref{lemma1}  shows that algebra ${\bf G}({\bF_{\free_1}})$ is a G\"odel algebra which is isomorphic to $\free_1$.
\begin{figure}
\begin{center}
\begin{tikzpicture}

  \node [label=below:{${\bot}$}, label=below:{} ] (n1)  {} ;
  \node [above right of=n1,label=right:{${x}$}, label=below:{ }] (n2)  {} ;
  \node [above left of=n1,label=left:{${\neg x}$},label=below:{}] (n3) {} ;
  \node [above right of=n2,label=right:{${\neg \neg x}$},label=below:{}] (n4) {} ;
  \node [above left of=n4,label=above:{${\top}$},label=below:{}] (n6) {} ;
  \node [above right of=n3,label=left:{${\neg x \vee x}$},label=below:{}] (n5) {} ;

  \draw  (n1) -- (n2);
  \draw (n1) -- (n3);
  \draw  (n3) -- (n5);
  \draw  (n2) -- (n4);
  \draw  (n4) -- (n6);
  \draw  (n5) -- (n6);
  \draw  (n2) -- (n5);
  \draw [fill] (n1) circle [radius=.5mm];
  \draw [fill] (n2) circle [radius=.5mm];
  \draw [fill] (n3) circle [radius=.5mm];
  \draw [fill] (n4) circle [radius=.5mm];
  \draw [fill] (n5) circle [radius=.5mm];
  \draw [fill] (n6) circle [radius=.5mm];

\end{tikzpicture}
%
\hspace{.6cm}
\begin{tikzpicture}
  \node [label=below:{$f_2$}, label=below:{} ] (n1)  {} ;
  \node [right of=n1,label=below:{$f_1$}, label=below:{} ] (n2)  {} ;
  \node [above of=n2, label=above:{$f_3$}, label=below:{} ] (n3)  {} ;
  \draw [fill] (n1) circle [radius=.5mm];
    \draw [fill] (n2) circle [radius=.5mm];
      \draw [fill] (n3) circle [radius=.5mm];
   \draw (n2)--(n3);
\end{tikzpicture}
\hfill
%
\begin{tikzpicture}

  \node [label=below:{${\emptyset}$}, label=below:{} ] (n1)  {} ;
  \node [above right of=n1,label=right:{${\{f_1\}}$}, label=below:{ }] (n2)  {} ;
  \node [above left of=n1,label=left:{${\{f_2\}}$},label=below:{}] (n3) {} ;
  \node [above right of=n2,label=right:{${\{f_1, f_3\}}$},label=below:{}] (n4) {} ;
  \node [above left of=n4,label=above:{${F_{\free_1}}$},label=below:{}] (n6) {} ;
  \node [above right of=n3,label=left:{${\{f_1, f_2\}}$},label=below:{}] (n5) {} ;

  \draw  (n1) -- (n2);
  \draw (n1) -- (n3);
  \draw  (n3) -- (n5);
  \draw  (n2) -- (n4);
  \draw  (n4) -- (n6);
  \draw  (n5) -- (n6);
  \draw  (n2) -- (n5);
  \draw [fill] (n1) circle [radius=.5mm];
  \draw [fill] (n2) circle [radius=.5mm];
  \draw [fill] (n3) circle [radius=.5mm];
  \draw [fill] (n4) circle [radius=.5mm];
  \draw [fill] (n5) circle [radius=.5mm];
  \draw [fill] (n6) circle [radius=.5mm];

\end{tikzpicture}
\end{center}
\caption{{\small From left to right: The Hasse diagram of the free G\"odel algebra over one generator $\free_1$, the forest $\bF_{\free_1}$ of its prime filters, and the Hasse diagram of its isomorphic copy ${\bf G}({\bF_{\free_1}})$}}
\label{fig1}
\end{figure}
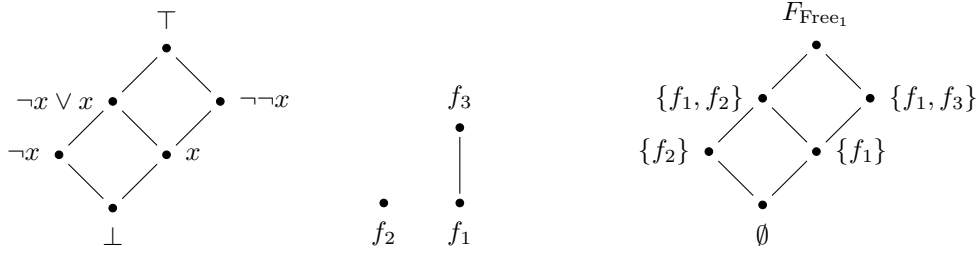
\end{example}

\section{G\"odel algebras with operators} \label{GAO}


In this section we introduce the basic class of G\"odel algebras with two modal operators that we will deal with in this chapter, and prove a representation theorem for them \`a la  J\'onsson-Tarski. 


Before entering into the details of the representation theorem we will show, it is interesting to recall that J\'onsson-Tarski theorem \cite{JT61} allows to isomorphically represent a Boolean algebra with operators (BAOs) as a another BAO on a field of sets (a subalgebra of a power set). Moreover, in Lemmon's celebrated paper \cite{Lemmon}, it is shown that for every {\em finite} BAO $({\bf A}, \Box, \Diamond)$ there exists another  BAO $({\bf A}', \Box', \Diamond')$, built from the set of prime filters of ${\bf A}$, such that $({\bf A}, \Box, \Diamond)$ and $({\bf A}', \Box', \Diamond')$ are isomorphic.

Results in the direction of giving similar representations for modal algebras, have been provided by several scholars. We here recall just a few, and in particular \cite{CelJan} for  the case of positive modal algebras, \cite{Cel08} for distributive lattices with operators, and \cite{Orlo,Ale} for Heyting algebras with operators. 

As for the particular case of Heyting algebras with operators (HAOs), similar representation results have been proven. More precisely, we can identify two types of representation theorems for (not finite in general) HAOs. The first kind provides, starting from any HAO $({\bf A}, \Box, \Diamond)$, another HAO $({\bf A}', \Box', \Diamond')$ in which the first one embeds (see for instance \cite{Orlo}); the second kind uses topological  techniques to construct $({\bf A}', \Box', \Diamond')$ that, more in the style of  J\'onsson-Tarski result, turns out to be isomorphic to the starting one (see \cite{Ale}). 

Unlike the case of BAO's, however, few papers consider the case of finite algebras and, although both techniques used to determine the embedding and the isomorphism just recalled should easily apply to this finite case, no explicit proof has been presented, as far as the authors knows.   This is the main reason why, in this section, we decided to make clear all the  steps needed to prove our main result, namely Theorem \ref{thm:isoBox} below whose statement could reasonably look familiar to some reader. Let us stress once again that our algebraic setting of finite G\"odel algebras lies, in terms of generality, between finite Boolean algebras with operators and finite Heyting algebras with operators.

Next definition is from \cite[Definition 5]{FGR}. 
 
\begin{definition}\label{def:GAO}
A {\em G\"odel algebra with operators} ({\em GAO} for short) is a triple $({\bf A}, \Box, \Diamond)$ where ${\bf A}$ is a G\"odel algebra, $\Box$ and $\Diamond$ are unary operators on $A$ satisfying the following equations:
\begin{itemize}
\item[$(\Box1)$] $\Box\top=\top$;
\item[$(\Box2)$] $\Box(x\wedge y)=\Box x\wedge \Box y$;
\item[$(\Diamond1)$] $\Diamond\bot=\bot$;
\item[$(\Diamond2)$] $\Diamond(x\vee y)=\Diamond x\vee \Diamond y$.
\end{itemize}
\end{definition}
Clearly the class of G\"odel algebras with operators forms a variety (i.e., an equational class) that we will henceforth denote by $\mathbb{GAO}$. 

 The class of GAOs in fact coincides with the class of  extensions with the prelinearity axiom of the algebras associated to the intuitionistic modal logic $IK(\Box, \Diamond)$ considered by Sotirov in \cite{Sotirov}, which in turn coincide with the so-called HK-algebras in \cite{Orlo}. Moreover, the fragment of GAOs without implication also coincide with the so-called $\Box\Diamond$-lattices in \cite{Cel06,Cel08}.
 
Although it could be presented as an adaptation for GAOs of what is proved in \cite{Orlo} for HK-algebras, in what follows we will provide all the needed details and proofs for a representation theorem for GAOs (next Theorem \ref{thm:isoBox}).

Let us start by showing some easy properties which will turn out to be useful for the rest of this section.

\begin{proposition}\label{prop:basic}[C.f. \cite[Lemmas 3.1 and 3.2]{Orlo}]
For every GAO $({\bf A}, \Box, \Diamond)$ and for every filter $f$ of ${\bf A}$ the following facts hold.
\begin{enumerate}
\item $\Box^{-1}(f)$ is a filter;
\item If $f$ is prime, then $\Diamond^{-1}(f)$ is a co-filter. 
\end{enumerate}
\end{proposition}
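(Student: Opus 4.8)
The plan is to reduce both claims to a single auxiliary observation, namely that the operators $\Box$ and $\Diamond$ are order-preserving, and then to verify the defining clauses of a filter (resp. co-filter) one by one. First I would record monotonicity. For $\Box$, note that $x\leq y$ is equivalent to $x\wedge y=x$; applying $\Box$ and using $(\Box 2)$ gives $\Box x=\Box(x\wedge y)=\Box x\wedge\Box y$, i.e. $\Box x\leq\Box y$. Dually, $x\leq y$ means $x\vee y=y$, so by $(\Diamond 2)$ we get $\Diamond y=\Diamond(x\vee y)=\Diamond x\vee\Diamond y$, that is $\Diamond x\leq\Diamond y$. This is the one structural fact I really need beyond the clause-checking below, so I would isolate it as a preliminary remark.

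For (1), I would verify the three filter conditions for $\Box^{-1}(f)=\{a\in A\mid \Box a\in f\}$. Non-emptiness, indeed $\top\in\Box^{-1}(f)$, is immediate from $(\Box 1)$ since $\Box\top=\top\in f$. Closure under meet follows from $(\Box 2)$: if $\Box x,\Box y\in f$ then $\Box x\wedge\Box y\in f$ because $f$ is a filter, and this element equals $\Box(x\wedge y)$, so $x\wedge y\in\Box^{-1}(f)$. Upward closure is exactly monotonicity of $\Box$ combined with upward closure of $f$: if $\Box x\in f$ and $x\leq y$, then $\Box x\leq\Box y$ forces $\Box y\in f$.

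For (2), I would dually check the two co-filter conditions for $\Diamond^{-1}(f)=\{a\in A\mid\Diamond a\in f\}$, now invoking primeness of $f$. Upward closure is monotonicity of $\Diamond$ together with upward closure of $f$, exactly as before. The join condition is where primeness enters: if $x\vee y\in\Diamond^{-1}(f)$, then $\Diamond x\vee\Diamond y=\Diamond(x\vee y)\in f$ by $(\Diamond 2)$, and since $f$ is prime this yields $\Diamond x\in f$ or $\Diamond y\in f$, i.e. $x\in\Diamond^{-1}(f)$ or $y\in\Diamond^{-1}(f)$.

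The one point that deserves genuine care, and the only place where the argument is not completely automatic, is the non-emptiness demanded of a co-filter. Unlike the $\Box$ case there is no axiom forcing $\Diamond\top\in f$; in fact for a degenerate GAO in which $\Diamond a=\bot$ for every $a$ one has $\Diamond^{-1}(f)=\emptyset$ for every proper $f$. I would therefore either adopt the convention that the empty set counts as an (improper) co-filter, which is consistent with reading co-filters as complements of ideals, or restrict attention to the setting in which this boundary case is excluded. This is the step I would flag explicitly, as opposed to the purely routine verifications above.
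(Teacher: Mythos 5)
Your proof is correct, and for item (2) it takes a more self-contained route than the paper. For item (1) you and the paper do exactly the same thing: $\top\in\Box^{-1}(f)$ by $(\Box1)$, closure under $\wedge$ by $(\Box2)$, and upward closure by monotonicity of $\Box$ (which the paper invokes without writing out the two-line derivation from $(\Box2)$ that you isolate as a preliminary remark). For item (2), however, the paper does not verify the co-filter clauses directly: it observes that a prime filter is in particular a co-filter, notes that $\Diamond\bot=\bot\notin f$ because $f$ is proper, and then delegates the remaining work to Lemma 3.2(a) of Or{\l}owska and Rewitzky \cite{Orlo}. Your direct verification---upward closure from monotonicity of $\Diamond$, and the join-splitting clause from $(\Diamond2)$ together with primeness of $f$---replaces that external citation with two lines of elementary algebra, which is arguably preferable in a section whose declared purpose is ``to make clear all the steps''.

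Your final paragraph on non-emptiness is not a pedantic aside; it points at a genuine mismatch that the paper's own proof does not resolve. The paper's definition of co-filter requires non-emptiness, yet its argument only establishes that $\bot\notin\Diamond^{-1}(f)$, i.e.\ properness---equivalently, that the complementary ideal is non-empty---and your degenerate example ($\Diamond a=\bot$ for all $a$, which satisfies both $(\Diamond1)$ and $(\Diamond2)$) shows that $\Diamond^{-1}(f)=\emptyset$ can genuinely occur for a prime $f$, so no proof can close this gap with the definitions as literally stated. The proposition is true only under the convention you propose: allowing the empty set as an (improper) co-filter, i.e.\ reading co-filters as complements of arbitrary ideals, which is in fact consistent with how the paper uses \cite{Orlo}. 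On this point your treatment is more careful than the paper's own.
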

\begin{proof} 
(1) Since $f$ is a filter, $\top\in \Box^{-1}(f)$ because of $(\Box1)$; $\Box^{-1}(f)$ is $\wedge$-closed because of $(\Box2)$; $\Box^{-1}(f)$ is upward closed because $\Box$ is monotone.

(2) Since $f$ is prime,  $f$ is a co-filter. By $(\Diamond1)$, $\Diamond\bot=\bot$ and hence $\Diamond\bot\not \in f$ because $f$ is proper. Therefore the claim follows from \cite[Lemma 3.2 (a)]{Orlo}.
\end{proof}


Let $({\bf A},\Box, \Diamond)$ be a GAO and let $\bF_{\bf A}$ the forest of its prime filters. Define $R_\Box$ and $R_\Diamond$ on $F_{\bf A}\times F_{\bf A}$ as follows: 
for each $f_1, f_2\in F_{\bf A}$,
 \begin{equation}\label{eq:QBox}
f_1 R_\Box f_2\mbox{ iff } \Box^{-1}(f_1)\subseteq f_2 
 \end{equation}
 and 
 \begin{equation}\label{eq:Qdiamond}
f_1 R_\Diamond f_2\mbox{ iff }\Diamond(f_2)\subseteq f_1 \mbox{ iff } f_2 \subseteq \Diamond^{-1}(f_1). 
\end{equation}
 
 \begin{lemma}\label{lemmaMA}
 For every  GAO $({\bf A}, \Box, \Diamond)$, the relations $R_\Box$ and $R_\Diamond$ respectively satisfy
 \begin{itemize}
 \item[$(M)$]   for all $f, g, h\in F_{\bf A}$, if $f\leq g$ and $f R_\Box h$, then  $g R_\Box h$;
\item[$(A)$] for all $f, g, h\in F_{\bf A}$, if $g\leq f$ and $f R_\Diamond h$, then  $g R_\Diamond h$.
\end{itemize}
 \end{lemma}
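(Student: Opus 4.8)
The plan is to prove both $(M)$ and $(A)$ by simply unwinding the definitions of $R_\Box$, $R_\Diamond$ in \eqref{eq:QBox}--\eqref{eq:Qdiamond} together with the forest order, and then invoking the elementary monotonicity of preimage operations under set inclusion. The single point that requires care is that the order on $\bF_{\bf A}$ is \emph{reverse} inclusion: $f\leq g$ means $f\supseteq g$, so $f\leq g$ translates into $g\subseteq f$ when $f,g$ are viewed as subsets of $A$. Everything else is bookkeeping, so I do not anticipate any genuine obstacle beyond keeping these directions straight.

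For $(M)$, I would assume $f\leq g$ and $f R_\Box h$. The hypothesis $f\leq g$ gives $g\subseteq f$. I then record the general fact that $\Box^{-1}$ is monotone for inclusion: if $g\subseteq f$ then $\Box^{-1}(g)\subseteq\Box^{-1}(f)$, because $x\in\Box^{-1}(g)$ means $\Box x\in g\subseteq f$, hence $\Box x\in f$, i.e. $x\in\Box^{-1}(f)$. Since $f R_\Box h$ says by \eqref{eq:QBox} that $\Box^{-1}(f)\subseteq h$, I obtain $\Box^{-1}(g)\subseteq\Box^{-1}(f)\subseteq h$, which is exactly $g R_\Box h$. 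Intuitively, moving \emph{up} in the forest from $f$ to $g$ shrinks the filter as a set, hence shrinks its $\Box$-preimage, which preserves the inclusion witnessing the relation; this is why $R_\Box$ is monotone in its first argument.

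For $(A)$, I would use the second formulation of $R_\Diamond$ in \eqref{eq:Qdiamond}, namely that $f R_\Diamond h$ iff $h\subseteq\Diamond^{-1}(f)$. Assume $g\leq f$ and $f R_\Diamond h$. Now $g\leq f$ means $g\supseteq f$, i.e. $f\subseteq g$, and by the same monotonicity of preimage $f\subseteq g$ yields $\Diamond^{-1}(f)\subseteq\Diamond^{-1}(g)$. Combining with $h\subseteq\Diamond^{-1}(f)$ gives $h\subseteq\Diamond^{-1}(f)\subseteq\Diamond^{-1}(g)$, that is $g R_\Diamond h$. Here moving \emph{down} in the forest enlarges the filter as a set and hence enlarges its $\Diamond$-preimage, so the inclusion $h\subseteq\Diamond^{-1}(\cdot)$ is preserved; this accounts for the antimonotone behaviour of $R_\Diamond$ in its first argument.

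In both cases the only ingredient is that taking $\Box^{-1}$ or $\Diamond^{-1}$ preserves set inclusion, so I would not even need the filter/co-filter structure established in Proposition~\ref{prop:basic}; that structure matters for later results but is irrelevant to the present monotonicity statements. Consequently I expect no real difficulty, and the proof reduces to the two short chains of inclusions above, the only thing worth flagging explicitly being the translation between the forest order and plain filter inclusion.
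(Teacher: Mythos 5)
Your proof is correct and follows essentially the same route as the paper: translate the forest order into reverse filter inclusion and unwind the definitions of $R_\Box$ and $R_\Diamond$, using monotonicity of preimages. The only cosmetic difference is in $(A)$, where the paper uses the direct-image formulation $\Diamond(h)\subseteq f$ and concludes by transitivity of inclusion ($\Diamond(h)\subseteq f\subseteq g$), whereas you use the equivalent preimage formulation $h\subseteq\Diamond^{-1}(f)$ together with monotonicity of $\Diamond^{-1}$; since both formulations appear in the paper's own definition of $R_\Diamond$, the two arguments are interchangeable.
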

 \begin{proof}
 Let $f,g,h\in F_{\bf A}$. If $f\leq g$ in the order of $\bF_{\bf A}$, then $f\supseteq g$ as prime filters, whence if $\Box^{-1}(f)\subseteq h$ then $\Box^{-1}(g)\subseteq h$. Therefore, if $f R_\Box h$, then $g R_\Box h$ that shows that $R_\Box$ satisfies $(M)$.
 
 As for the second claim, let $f,g,h\in F_{\bf A}$ and assume $f R_\Diamond h$ (i.e., $\Diamond(h)\subseteq f$ as prime filters) and $f\geq g$, meaning that, as prime filters, $f\subseteq g$. Then, $\Diamond(h)\subseteq f\subseteq g$ and hence $g R_\Diamond h$.
 \end{proof}

 Now, let $(\bF, R_\Box, R_\Diamond)$ be such that $\bF$ is a forest and $R_\Box, R_\Diamond\subseteq F\times F$ respectively satisfy $(M)$ and $(A)$ of Lemma \ref{lemmaMA}. Let  ${\bf G}({\bF})$ be the G\"odel algebra of downsets of $\bF$ defined as in the previous section 
and consider the maps $\beta, \delta: G(\bF)\to G(\bF)$ such that,
for every $a\in G(\bF)$

\begin{equation}\label{eqBetaR}
 \beta(a)=\{y\in F\mid \forall z\in F, \; (y R_\Box z\Rightarrow z\in a)\},
 \end{equation}
 and 
\begin{equation}\label{eqMQDiamond}
\delta(a)=\{y\in F\mid \exists z\in a,\; y R_\Diamond z\}.
\end{equation}
\begin{remark}
(1) For all $a\in G(\bF)$, $\beta(a)$ is a subforest of $\bF$. 
Indeed, if  $x\in \beta(a)$ then $\forall z\in F, \; (x R_\Box z \Rightarrow z\in a)$. Let $y\leq x$. For all $z$, if $y R_\Box z $, then $x R_\Box z $ as well, because of $(M)$, and hence $z\in a$. Thus $y\in \beta(a)$.
\vspace{.2cm}

(2) For all $a\in G(\bF)$,
$\delta(a)\in  G(\bF)$, i.e.,  $\delta(a)$ is a subforest of $\bF$. Indeed if $x\in \delta(a)$ then there exists $z\in a$ such that $x R_\Diamond z $. Let $y\leq x$ in $\bF$. Then  (A) of Lemma \ref{lemmaMA} implies $y R_\Diamond z$ as well, that is $y\in \delta(a)$ and hence $\delta(a)$ is downward closed. 
\end{remark}
Moreover, the following properties hold.
\begin{proposition}\label{prop:properties1}
Let ${\bf F}$ be a finite forest and let $R_\Box, R_\Diamond\subseteq F\times F$ such that $R_\Box$ satisfies $(M)$ and $R_\Diamond$ satisfies $(A)$. Let $\beta,\delta:G(\bF)\to G(\bF)$ be defined as in (\ref{eqBetaR}) and (\ref{eqMQDiamond}) respectively. Then:
\begin{enumerate}
\item $\beta(\top)=\top$;
\item  For all $a, b\in G(\bF)$, $\beta(a \land b)= \beta(a) \cap \beta(b)$.
\item $\delta(\bot)=\bot$;
\item For all $a, b\in G(\bF)$, $\delta(a \lor b)= \delta(a) \cup \delta(b)$.
\end{enumerate}
\end{proposition}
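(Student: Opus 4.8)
The plan is to establish each of the four identities by directly unfolding the defining equations (\ref{eqBetaR}) and (\ref{eqMQDiamond}) and reducing each claim to an elementary manipulation of quantifiers, recalling that in $\mathbf{G}(\bF)$ the top element is $\top = F$ and the bottom is $\bot = \emptyset$. I would stress at the outset that the monotonicity hypotheses $(M)$ and $(A)$ play no role in these four identities: they were needed only in the preceding Remark to guarantee that $\beta(a)$ and $\delta(a)$ are genuine subforests. The algebraic identities themselves are purely set-theoretic.

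For item 1, I would observe that $\beta(F) = \{y \in F \mid \forall z \in F, (y R_\Box z \Rightarrow z \in F)\}$, and since the consequent $z \in F$ holds for every $z$, the implication is satisfied vacuously; hence $\beta(F) = F = \top$. Dually, for item 3, I would note that $\delta(\emptyset) = \{y \in F \mid \exists z \in \emptyset,\ y R_\Diamond z\}$, and since no $z$ lies in $\emptyset$ the existential statement is false for every $y$, so $\delta(\emptyset) = \emptyset = \bot$. Both cases are thus degenerate one-line arguments.

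For item 2, I would unfold $\beta(a \cap b) = \{y \mid \forall z,\ (y R_\Box z \Rightarrow z \in a \cap b)\}$ and invoke the logical equivalence between $\forall z\,(y R_\Box z \Rightarrow (z \in a \wedge z \in b))$ and the conjunction of $\forall z\,(y R_\Box z \Rightarrow z \in a)$ and $\forall z\,(y R_\Box z \Rightarrow z \in b)$, i.e. the distribution of $\forall$ over $\wedge$; this yields exactly $y \in \beta(a) \cap \beta(b)$. Symmetrically, for item 4, I would unfold $\delta(a \cup b) = \{y \mid \exists z,\ (z \in a \cup b) \wedge y R_\Diamond z\}$ and apply the distribution of $\exists$ over $\vee$ to split it as $\{y \mid \exists z \in a,\ y R_\Diamond z\} \cup \{y \mid \exists z \in b,\ y R_\Diamond z\} = \delta(a) \cup \delta(b)$.

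Given the above, no step presents a genuine obstacle; the only point requiring a modicum of care is keeping the two directions of the set equalities in items 2 and 4 aligned with the corresponding logical laws, which are in any case biconditionals and so deliver both inclusions at once. I would therefore present the proof compactly, treating 1 and 3 as vacuity arguments and 2 and 4 as quantifier-distribution arguments, without appealing to $(M)$ or $(A)$.
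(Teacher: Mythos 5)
Your proof is correct and follows essentially the same route as the paper's: items 1 and 3 are handled by vacuity ($z\in F$ always holds; $\emptyset$ has no elements), and items 2 and 4 by distributing $\forall$ over $\wedge$ and $\exists$ over $\vee$ after unfolding (\ref{eqBetaR}) and (\ref{eqMQDiamond}). Your side remark that $(M)$ and $(A)$ play no role in these four identities, being needed only to ensure $\beta(a),\delta(a)\in G(\bF)$, is accurate and matches how the paper itself organizes the material (that closure is established in the preceding Remark, not in this proof).
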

\begin{proof}
(1) Recall from Section \ref{sec:algandFortests} that the top element of ${\bf G}({\bF})$ is $F$. Thus, $\beta(\top)=\beta(F)=\{y\in F\mid \forall z\in F, \; (y R_\Box z \Rightarrow z\in F)\}$. Obviously, the condition $(y R_\Box z \Rightarrow z\in F)$ is true for all $z\in F$ and hence $\beta(F)=F$.
\vspace{.1cm}

\noindent(2)  For all $a,b \in G(\bF)$, we have, 
$$
\begin{array}{lll}
\beta(a \land b)&=&\{y\in F\mid \forall z\in F, \; y R_\Box z \Rightarrow z\in a \land b\}\\
&=& \{y\in F\mid \forall z\in F, \; y R_\Box z \Rightarrow z\in a \cap b\}\\
&=&\{y\in F\mid \forall z\in F, \; y R_\Box z \Rightarrow  z\in a \} \;\cap \\ & & \{y\in F\mid \forall z\in F, \; y R_\Box z \Rightarrow  z\in b \} \\
&=& \beta(a) \cap \beta(b).\\
\end{array}
$$
\vspace{.1cm}

\noindent(3) The bottom element of ${\bf G}({\bF})$ is the empty forest, whence $\emptyset=\{y\in F\mid \exists z\in \emptyset, \; y R_\Diamond z \}=\delta(\bot)$.
\vspace{.1cm}

\noindent (4) $\delta(a \lor b)=\{y\in F\mid \exists z\in a \lor b,\; y R_\Diamond z \} = \{y\in F\mid \exists z\in a \cup b,\; y R_\Diamond z \}=
\{y\in F\mid \exists z\in a,\;y R_\Diamond z \} \cup \{y\in F\mid \exists z\in b,\;y R_\Diamond z \} = \delta(a) \cup \delta(b)$.
\end{proof}

It was shown in Lemma \ref{lemma1} that, if $\bf A$ is a G\"odel algebra then ${\bf G}(\bF_{\bf A})$ is a G\"odel algebra too. Therefore, Prop. \ref{prop:properties1} shows that, if $({\bf A}, \Box, \Diamond)$ is a GAO, then $({\bf G}(\bF_{\bf A}), \beta, \delta)$ is a GAO as well. The following J\'onsson-Tarski like representation theorem shows they are isomorphic.

 \begin{theorem}\label{thm:isoBox}
 Every finite GAO $({\bf A}, \Box, \Diamond)$ is isomorphic  to the GAO $({\bf G}(\bF_{\bf A}), \beta, \delta)$ through the mapping $r: ({\bf A}, \Box, \Diamond)\to ({\bf G}(\bF_{\bf A}), \beta, \delta)$ such that, for every $a\in A$, $r(a) = \{f\in F_{\bf A}\mid a\in f\}$. In particular, for all $a\in A$,
\begin{equation}\label{eq2}
r(\Box(a))=\beta(r(a)) \mbox{ and }r(\Diamond (a))=\delta(r(a)).
\end{equation}
 \end{theorem}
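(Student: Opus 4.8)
The plan is as follows. By Lemma \ref{lemma1} the map $r$ is already an isomorphism between the G\"odel algebra ${\bf A}$ and ${\bf G}(\bF_{\bf A})$; in particular it is a bijection preserving $\wedge,\vee,\to,\bot,\top$. Since Proposition \ref{prop:properties1} guarantees that $({\bf G}(\bF_{\bf A}),\beta,\delta)$ is itself a GAO, it suffices to verify that $r$ commutes with the two modal operators, i.e.\ to establish the two identities in (\ref{eq2}). Once this is done, $r$ is a bijective GAO-homomorphism, hence a GAO-isomorphism, and the theorem follows.

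For the first identity I would unfold both sides using the definitions of $r$, $\beta$ and $R_\Box$, obtaining
$$
r(\Box a)=\{f\in F_{\bf A}\mid \Box a\in f\},\qquad
\beta(r(a))=\{f\in F_{\bf A}\mid \forall g\in F_{\bf A},\ \Box^{-1}(f)\subseteq g\Rightarrow a\in g\}.
$$
The inclusion $r(\Box a)\subseteq\beta(r(a))$ is immediate: $\Box a\in f$ means $a\in\Box^{-1}(f)$, so $a$ lies in every $g\supseteq\Box^{-1}(f)$. For the converse inclusion I would argue by contraposition: if $\Box a\notin f$, i.e.\ $a\notin\Box^{-1}(f)$, then since $\Box^{-1}(f)$ is a filter (Proposition \ref{prop:basic}(1)), the prime filter separation theorem yields a prime filter $g\supseteq\Box^{-1}(f)$ with $a\notin g$, which witnesses the failure of the $\beta$-condition. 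In the finite setting this separation can be made explicit: choosing a join-irreducible $p$ below the generator of the principal filter $\Box^{-1}(f)$ with $p\not\leq a$, the principal prime filter ${\uparrow}p$ does the job.

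For the second identity I would similarly rewrite
$$
r(\Diamond a)=\{f\in F_{\bf A}\mid \Diamond a\in f\},\qquad
\delta(r(a))=\{f\in F_{\bf A}\mid \exists g\in F_{\bf A},\ a\in g\ \text{and}\ \Diamond(g)\subseteq f\}.
$$
Here $\delta(r(a))\subseteq r(\Diamond a)$ is the easy direction: from $a\in g$ and $\Diamond(g)\subseteq f$ we get $\Diamond a\in\Diamond(g)\subseteq f$. For $r(\Diamond a)\subseteq\delta(r(a))$ I would use that, by Proposition \ref{prop:basic}(2), $\Diamond^{-1}(f)$ is a co-filter whenever $f$ is prime. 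If $\Diamond a\in f$ then $a\in\Diamond^{-1}(f)$, and since co-filters are upward closed we get ${\uparrow}a\subseteq\Diamond^{-1}(f)$; hence the filter ${\uparrow}a$ is disjoint from the ideal $A\setminus\Diamond^{-1}(f)$. Applying the prime filter separation theorem to this filter--ideal pair produces a prime filter $g$ with $a\in g$ and $g\subseteq\Diamond^{-1}(f)$, i.e.\ $\Diamond(g)\subseteq f$ and $f R_\Diamond g$, which is exactly the witness needed for $f\in\delta(r(a))$.

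The only non-routine steps are the two separation arguments in the hard inclusions; everything else is a mechanical unfolding of definitions together with the filter/co-filter facts already recorded in Proposition \ref{prop:basic}. I therefore expect the main obstacle to be the correct invocation of the prime filter separation theorem (equivalently, in the finite case, the characterization of prime filters as principal up-sets of join-irreducibles) and, for the diamond case, the passage through the complementary ideal of the co-filter $\Diamond^{-1}(f)$.
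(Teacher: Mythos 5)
Your proof is correct, but it diverges from the paper's own argument in the $\Diamond$-case, and the divergence is worth noting. For the $\Box$-identity the two proofs are essentially the same: both rest on extending the filter $\Box^{-1}(f)$ to a prime filter avoiding $a$ (the paper does this by contradiction, citing \cite[Lemma 2.3.15]{H98}; you do it by contraposition, and your explicit finite version --- pick a join-irreducible $p$ below the generator of $\Box^{-1}(f)$ with $p\not\leq a$ and take ${\uparrow}p$ --- is a sound instantiation of that separation step, since prime filters of a finite G\"odel algebra are exactly the principal upsets of join-irreducibles). For the $\Diamond$-identity, however, the paper does \emph{not} use a separation argument at all: it first reduces to the case of $a$ join-irreducible, using $(\Diamond2)$, the fact that $r$ is a G\"odel isomorphism, and the additivity of $\delta$ (Proposition \ref{prop:properties1}(4)), and then observes that for join-irreducible $a$ the principal filter $f_a={\uparrow}a$ is itself prime, so $\Diamond(f_a)\subseteq f_{\Diamond(a)}\subseteq f'$ gives the witness directly. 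You instead treat arbitrary $a$ uniformly: from $\Diamond a\in f$ you get ${\uparrow}a\subseteq\Diamond^{-1}(f)$, note that $\Diamond^{-1}(f)$ is a co-filter (Proposition \ref{prop:basic}(2)) whose complement is an ideal, and invoke the Birkhoff prime filter separation theorem to produce $g$ with $a\in g\subseteq\Diamond^{-1}(f)$ --- precisely the style of argument the paper itself deploys later in the proof of Theorem \ref{CJteo}. The trade-off: the paper's route is more constructive in the finite setting (the witness is an explicitly named principal prime filter, no choice principle even implicitly), while yours avoids the join-irreducible decomposition entirely and would carry over unchanged to settings where elements are not finite joins of join-irreducibles. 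One small point you should make explicit: for the complement of $\Diamond^{-1}(f)$ to be a (non-empty) ideal you need $\Diamond^{-1}(f)$ to be proper, which follows from $(\Diamond1)$ since $\Diamond\bot=\bot\notin f$; this is implicit in the paper's proof of Proposition \ref{prop:basic}(2) but deserves a sentence in yours.
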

 \begin{proof}
 We  showed in Lemma \ref{lemma1} that the map $r:{\bf A}\to{\bf G}(\bF_{\bf A})$ is a G\"odel isomorphism. Thus, it remains to show that (\ref{eq2}) holds. 
\vspace{.2cm}

\noindent(1) $r(\Box(a))=\beta(r(a))$.  Let us start proving that for all $a\in A$, $\beta(r(a))\subseteq r(\Box(a))$. By definition,
 $$
 \begin{array}{lll}
 \beta(r(a))&=&\{f\in F_{\bf A}\mid \forall g\in F_{\bf A}\;(f R_\Box g \Rightarrow g\in r(a))\}\\
 &=&\{f\in F_{\bf A}\mid \forall g\in F_{\bf A}\;(\Box^{-1}(f)\subseteq g\Rightarrow a\in g)\}.
 \end{array}
 $$
 Let $f\in \beta(r(a))$ and assume, by way of contradiction, that $f\not\in r(\Box(a))$, that is to say, $a\not\in \Box^{-1}(f)$. Notice that this assumption forces $a\neq \top$. 
 By Proposition \ref{prop:basic} (1), $\Box^{-1}(f)$ is a filter. Thus, 
%
 if $a\not\in \Box^{-1}(f)$ and since $a\neq\top$, by \cite[Lemma 2.3.15]{H98}, there exists a prime filter $g$ of ${\bf A}$ such that $g\supseteq \Box^{-1}(f)$ and $a\not\in g$. On the other hand, $f R_\Box g $ because $g$ extends $\Box^{-1}(f)$ and $a\not\in g$. Thus, $f\not\in \beta(r(a))$ and a contradiction has been reached.

 For the other inclusion, we have to prove that if $\Box(a)\in f$, then for all $g\in F_{\bf A}$, $f R_\Box g \Rightarrow a\in g$. If $\Box(a)\in f$, then $a\in \Box^{-1}(f)$. Therefore, for all $g\in F_{\bf A}$, if $f R_\Box g $, then $\Box^{-1}(f)\subseteq g$ and hence $a\in g$ which settles the claim.

\vspace{.2cm}

\noindent(2) $r(\Diamond (a))=\delta(r(a))$. 
First of all notice that it is sufficient to prove it for the case of $a$ being a join-irreducible element of ${\bf A}$. Indeed, assume that the right-hand-side of (\ref{eq2}) holds for join irreducible elements and let $b$ be not join irreducible. Then $b$ can be displayed as $b=a_1\vee\ldots\vee a_k$, where  the $a_i$'s are join irreducible. By $(\Diamond2)$, $\Diamond(b)=\Diamond(a_1)\vee\ldots\vee\Diamond(a_k)$. Therefore, since $r$ is a G\"odel algebra isomorphism,
$$
r(\Diamond(b))=r(\Diamond(a_1))\vee\ldots\vee r(\Diamond(a_k)).
$$
By assumption, $r(\Diamond a_i)=\delta(r(a_i))$ for all $i=1,\ldots,k$. Thus, $r(\Diamond(b))=\delta(a_1)\vee\ldots\vee \delta(a_k)$ which equals $\delta(b)$ by Proposition \ref{prop:properties1}(2).

Let hence $a$ be join irreducible and let us prove that $r(\Diamond (a))\supseteq\delta(r(a))$ and $r(\Diamond (a))\subseteq\delta(r(a))$. As for the first inclusion, notice that for all $a\in A$ (being $a$ join irreducible or not), by Lemma \ref{lemma1}, 
$$
\begin{array}{lll}
\delta(r(a))&=&\{f\in F_{\bf A}\mid \exists g\in r(a),\; f R_\Diamond g  \}\\
&=&\{f\in F_{\bf A}\mid \exists g\in F_{\bf A},\; (a\in g\;\& \; f R_\Diamond g  \}\\
&=&\{f\in F_{\bf A}\mid \exists g\in F_{\bf A},\; (a\in g\;\& \; \Diamond(g)\subseteq f)\}.
\end{array}
$$

Therefore, if $f\in \delta(r(a))$, $\Diamond(a)\in f$ and hence $f\in r(\Diamond(a))$ and hence $r(\Diamond (a))\supseteq\delta(r(a))$.

To prove the other inclusion we have to show that if $f'\in r(\Diamond(a))$,  there exists an $f\in F_{\bf A}$ such that $a\in f$ and $\Diamond(f)\subseteq f'$.
Since  $a$ is join irreducible, the filter $f_a=\{b\in A\mid b\geq a\}$ is prime. Let us prove that $\Diamond(f_a)\subseteq f'$.
\begin{claim}\label{claim0}
$\Diamond(f_a)\subseteq f_{\Diamond(a)}=\{x\in A\mid x\geq \Diamond(a)\}$.
\end{claim}
As a matter of fact, if $z\in \Diamond(f_a)$, then there exists $b\geq a$ such that $z=\Diamond(b)$. Since $\Diamond$ is monotone, $\Diamond(b)\geq \Diamond(a)$, whence $z=\Diamond(b)\in f_{\Diamond(a)}$.

\begin{claim}\label{claim1}
For all $f'\in r(\Diamond(a))$,
$f_{\Diamond(a)}\subseteq f'$.
\end{claim}
Indeed, if  $x\in f_{\Diamond(a)}$, then $x\geq \Diamond(a)$ and hence $x\in f'$ because $\Diamond(a)\in f'$ and  $f'$ is  upward closed.

By the above claims, for all $f'\in r(\Diamond(a))$, $\Diamond(f_a)\subseteq f'$, whence
$$
r(\Diamond(a))\subseteq \delta(r(a)).
$$
Thus, for all $a$, $r(\Diamond(a))= \delta(r(a))$ which settles the claim.
 \end{proof}
 Theorem \ref{thm:isoBox} above shows that every finite GAO can be isomorphically represented as the algebra of subforests of the forests of its prime filters. This kind of representation   will be henceforth call {\em forest-based representation}. In these latter algebras, the modal operators are obtained by two  binary relations $R_\Box$ and $R_\Diamond$ satisfying $(M)$ and $(A)$ respectively. 
Let us further notice that, although these two relations $R_\Box$ and $R_\Diamond$ are independent in general, there are significant cases in which they are not, or  they can even coincide. An example of the latter is the case of classical Kripke frames, the dual semantics of Boolean algebras with operators, a proper subvariety of $\mathbb{GAO}$.

We will deepen the investigation on such relational models in the next section, but it is worth pointing out that, indeed, the theorem above is sufficiently general to be rephrased for every subclass $\mathbb{K}$ of $\mathbb{GAO}$ that is closed under isomorphic images. i.e., such that $\mathbb{K}={\bf I}(\mathbb{K})$. Thus, in particular, it applies to all subvarieties of $\mathbb{GAO}$. The following easy consequence of Theorem \ref{thm:isoBox} makes this fact clear.

 \begin{corollary}\label{corollarySubclass}
 Let $\mathbb{K}$ be any subset of $\mathbb{GAO}$ that is closed under isomorphic images. Then, every algebra in $\mathbb{K}$ has an isomorphic forest-based representation in $\mathbb{K}$.
 \end{corollary}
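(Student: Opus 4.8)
The plan is to derive this directly from Theorem \ref{thm:isoBox}, since the corollary is essentially a reformulation of that theorem relativized to an isomorphism-closed subclass. First I would fix an arbitrary algebra $({\bf A}, \Box, \Diamond) \in \mathbb{K}$; because $\mathbb{K} \subseteq \mathbb{GAO}$ and (by the standing convention of the paper) all algebras under consideration are finite, the hypotheses of Theorem \ref{thm:isoBox} are met. Applying that theorem yields the forest-based GAO $({\bf G}(\bF_{\bf A}), \beta, \delta)$ together with the isomorphism $r$ witnessing $({\bf A}, \Box, \Diamond) \cong ({\bf G}(\bF_{\bf A}), \beta, \delta)$.

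The second and final step is to invoke the closure hypothesis $\mathbb{K} = {\bf I}(\mathbb{K})$. Since $({\bf A}, \Box, \Diamond)$ belongs to $\mathbb{K}$ and $({\bf G}(\bF_{\bf A}), \beta, \delta)$ is an isomorphic image of it under $r$, closure under isomorphic images forces $({\bf G}(\bF_{\bf A}), \beta, \delta) \in \mathbb{K}$. This is precisely the assertion that the forest-based representation of $({\bf A}, \Box, \Diamond)$ lies in $\mathbb{K}$, completing the argument. I do not anticipate any genuine obstacle here: the entire content of the corollary is carried by Theorem \ref{thm:isoBox}, and the only thing to be careful about is the bookkeeping, namely that the object produced by the theorem is the \emph{forest-based} representation as defined in the paragraph preceding the statement, and that ``isomorphic forest-based representation in $\mathbb{K}$'' is unpacked exactly as membership of that isomorphic copy in $\mathbb{K}$. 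Should one wish, the same line of reasoning specializes immediately to any subvariety of $\mathbb{GAO}$, since varieties are in particular closed under isomorphic images.
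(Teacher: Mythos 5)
Your proposal is correct and is exactly the argument the paper intends: the corollary is stated there as an immediate consequence of Theorem \ref{thm:isoBox} (with no written proof), obtained by applying that theorem to an arbitrary $({\bf A},\Box,\Diamond)\in\mathbb{K}$ and then using closure under isomorphic images, $\mathbb{K}={\bf I}(\mathbb{K})$, to place $({\bf G}(\bF_{\bf A}),\beta,\delta)$ in $\mathbb{K}$. Your remarks on the standing finiteness convention and on unpacking ``forest-based representation'' are the right bookkeeping points and match the paper's reading.
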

 
 Note that, in particular, this corollary will be applicable to the four subvarieties of $\mathbb{GAO}$ that will be considered along this paper, namely  $\mathbb{DGAO}$ and $\mathbb{FSGAO}$ in Section \ref{extensions}, and $\mathbb{FSDGAO}$ and $\mathbb{WGAO}$ introduced in Section \ref{one-relation}. 
 
\section{Forest frames with two relations}\label{sec:tworelations}
This section is dedicated to investigate the relational structures used in the previous results and that are made, for a GAO $({\bf A}, \Box, \Diamond)$, of the forest ${\bf F}_{\bf A}$ of its prime filters and two binary relations $R_\Box$ and $R_\Diamond$ respectively satisfying the properties (M): {\em monotonicity} in the first argument, and (A): {\em antimonotonicity} in the fist argument.

The idea of defining relational structures on the prime spectrum of the modal algebra is not new and indeed the below definition of forest frame is strongly inspired by the usual way relational structures are defined for intuitionistic modal logic and, in particular, for the logic denoted ${\bf IntK}$ in \cite{Wolter} in  which the two modalities $\Box$ and $\Diamond$ have no axioms in common and hence they are treated, on the relational side, by two (independent) accessibility relations. 

After defining forests frames, we will compare these structures with their analogous considered by Bo\v{z}i\'c and Do\v{s}en in  \cite{BoDo} and later by  Or{\l}owska and  Rewitzky in \cite{Orlo}, and by Palmigiano in \cite{Ale}.
\begin{definition}\label{def1}
A {\em forest frame} is a triple $(\bF, R_\Box, R_\Diamond)$ where $\bF=(F,\leq)$ is a finite forest and $R_\Box, R_\Diamond\subseteq F\times F$ respectively satisfy the following conditions:
\begin{itemize}
\item[$(M)$]  for all $x, y, z\in F$, if $x\leq y$ and $x R_\Box z $, then  $y R_\Box z$;
\item[$(A)$] for all $x, y, z\in F$, if $y\leq x$ and $x R_\Diamond z $, then  $y R_\Diamond z$.
\end{itemize}
\end{definition}

We have seen in the previous section that for every forest frame $(\bF, R_\Box,R_\Diamond)$ we have an associated GAO $({\bf G}({\bF}),\beta,\delta)$ and for every GAO $(\mathbf{A},\Box,\Diamond)$ we have an associated forest frame $(\bF_\mathbf{A},R_\square,R_\Diamond) $ where the relations $R_\Box , R_\Diamond\subseteq F_\mathbf{A}\times F_\mathbf{A}$ are the ones defined in the previous section.

\begin{remark}\label{remMA}
Notice that  conditions (M) and (A) in the definition above can be equivalently expressed as follows:
\begin{itemize}
\item[(M)] $(\geq \circ R_\Box) \subseteq R_\Box$ 
\item[(A)] $(\leq \circ R_\Diamond) \subseteq R_\Diamond$
\end{itemize}
where $\circ$ denotes the composition of relations. Since the converse inclusions always hold, these conditions can in turn be equivalently expressed as identities as follows:
\begin{itemize}
\item[(M)] $(\geq \circ R_\Box) = R_\Box$ 
\item[(A)] $(\leq \circ R_\Diamond) = R_\Diamond$. 
\end{itemize}
The same conditions (M) and (A) are considered by Bo\v{z}i\'c and Do\v{s}en in  \cite{BoDo} in the framework of relational models for intutionistic modal logics. More precisely, condition (M) is the one that defines in \cite{BoDo} the so-called  {\em condensed H$_\Box$ frames}, while (A) defines the {\em condensed H$_\Diamond$ frames.}
\end{remark}

Now, let  $(\bF, R_\Box, R_\Diamond)$ be any forest frame and let us define the following two binary relations on $F$:
\begin{equation}\label{eqRprime}
R'_\Box=R_\Box\circ \geq\mbox{ and }R'_\Diamond=R_\Diamond\circ \leq.
\end{equation}
In other words, for all $x,y\in F$, $x R'_\Box y$ iff there exists $z\in F$ such that $x R_\Box z $ and $z\geq y$. Analogously, $x R'_\Diamond y$ iff there exists $z\in F$ such that $x R_\Diamond z $ and $z\leq y$. Then, the following holds.
\begin{proposition}\label{prop:Rprime}
For every forest frame  $(\bF, R_\Box, R_\Diamond)$ the following conditions hold:
\begin{enumerate}
\item $(\bF, R'_\Box, R'_\Diamond)$ is a forest frame;
\item $R'_\Box(x)={\downarrow}R_\Box(x)$ and $R'_\Diamond(x)={\uparrow}R_\Diamond(x)$;
\item $(\geq\circ R_\Box'\circ \geq) = R_\Box'$ and $(\leq\circ R_\Diamond'\circ \leq) = R_\Diamond'$.
\end{enumerate}
\end{proposition}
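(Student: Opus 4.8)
The plan is to carry out all three items at the level of relational composition rather than with elements, exploiting the identity reformulations of conditions $(M)$ and $(A)$ recorded in Remark~\ref{remMA}, namely $\geq \circ R_\Box = R_\Box$ and $\leq \circ R_\Diamond = R_\Diamond$. The only extra algebraic facts needed are that composition of relations is associative and that, since the order of $\bF$ is a partial order, both $\geq$ and $\leq$ are reflexive and transitive, hence idempotent under composition: $\geq \circ \geq = \geq$ and $\leq \circ \leq = \leq$ (the inclusion $\subseteq$ is transitivity, the inclusion $\supseteq$ is reflexivity). Throughout I will keep fixed the composition convention of Remark~\ref{remMA}, under which $a\,(S \circ T)\,c$ means $\exists b\,(a\,S\,b \text{ and } b\,T\,c)$; I will state this explicitly at the outset, since it is the easiest point to slip, so as not to confuse $R_\Box \circ \geq$ with $\geq \circ R_\Box$.

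I would prove item (2) first, as it is pure unfolding of definitions and needs neither $(M)$ nor $(A)$. Spelling out $R'_\Box(x) = \{y \mid \exists z,\ x R_\Box z \text{ and } z \geq y\}$ and comparing with ${\downarrow}R_\Box(x) = \{y \mid \exists z \in R_\Box(x),\ y \leq z\}$ gives the first equality at once, and the computation for $R'_\Diamond(x) = {\uparrow}R_\Diamond(x)$ is symmetric. For item (1) I would show that $R'_\Box$ satisfies $(M)$ and $R'_\Diamond$ satisfies $(A)$; in composition form these read $\geq \circ R'_\Box = R'_\Box$ and $\leq \circ R'_\Diamond = R'_\Diamond$. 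Each follows in one line: $\geq \circ R'_\Box = \geq \circ (R_\Box \circ \geq) = (\geq \circ R_\Box) \circ \geq = R_\Box \circ \geq = R'_\Box$, using associativity and the identity form of $(M)$, and symmetrically for $R'_\Diamond$ via $(A)$.

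Finally, item (3) combines the previous observations. The key point is that $R'_\Box$ by construction already absorbs $\geq$ on the right, since $R'_\Box \circ \geq = R_\Box \circ (\geq \circ \geq) = R_\Box \circ \geq = R'_\Box$ by idempotency of $\geq$; together with the left absorption $\geq \circ R'_\Box = R'_\Box$ from item (1) this yields $\geq \circ R'_\Box \circ \geq = \geq \circ (R'_\Box \circ \geq) = \geq \circ R'_\Box = R'_\Box$, and dually $\leq \circ R'_\Diamond \circ \leq = R'_\Diamond$. I do not expect a genuine obstacle here: the whole argument is bookkeeping in relation algebra. The only real care points are (i) using reflexivity of the order to justify the identity (rather than merely inclusion) reformulations of $(M)$ and $(A)$, and (ii) never transposing the roles of $\geq$ and $\leq$, nor the left and right sides of a composition. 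Should one prefer to avoid the composition calculus, each identity admits an equivalent short element-chase, but the relational form makes the symmetry between the $\Box$ and $\Diamond$ cases transparent and keeps the proof compact.
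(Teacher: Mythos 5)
Your proof is correct, and its underlying decomposition is the same as the paper's: item (2) by unfolding definitions, item (1) by showing $R'_\Box$ and $R'_\Diamond$ inherit $(M)$ and $(A)$ from $R_\Box$ and $R_\Diamond$, and item (3) by combining $(M)$ for $R'_\Box$ with the fact that $R'_\Box$ absorbs $\geq$ on the right. The difference is one of technique: the paper argues element-wise (for (1) it chases points through the definitions using $(M)$ and $(A)$; for (3) it invokes the downset property of $R'_\Box(x)$, i.e.\ item (2)), whereas you work point-free in the composition calculus, using the identity reformulations $\geq \circ R_\Box = R_\Box$ and $\leq \circ R_\Diamond = R_\Diamond$ of Remark~\ref{remMA} together with associativity and the idempotency $\geq\circ\geq\,=\,\geq$, $\leq\circ\leq\,=\,\leq$. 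These routes are equivalent in content --- $R'_\Box(x)$ being a downset for every $x$ is precisely the inclusion $R'_\Box \circ \geq\, \subseteq R'_\Box$, which you derive instead from idempotency of $\geq$ --- but your rendering is more compact, makes the $\Box$/$\Diamond$ symmetry transparent, and actually puts Remark~\ref{remMA} to work (the paper states it but does not use it in this proof). The price, as you correctly identify, is that the whole argument hinges on fixing the composition convention once and for all; your explicit statement of that convention at the outset, and your justification of the identity (not merely inclusion) forms via reflexivity, close the two places where such a proof could silently go wrong.
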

\begin{proof}
(1) Let $x, y, z\in F$ such that $y\leq x$ and $x R_\Diamond' z$. Then, there exists $w\in F$ such that $x R_\Diamond w$ and $w\leq z$. Since $R_\Diamond$ satisfies (A), $y R_\Diamond w$. From, $y R_\Diamond w$ and $w\leq z$, we get $y R_\Diamond' z$.

Let $x, y, z\in F$ such that $x\leq y$ and $x R_\Box' z$. Then, there exists $w\in F$ such that $x R_\Box w$ and $z\leq w$. Since $R_\Box$ satisfies (M), $y R_\Box w$. From, $y R_\Box w$ and $z\leq w$, we get $y R_\Box' z$.

(2) It follows from the definition of $R_\Diamond'$ and $R_\Box'$.

(3) The inclusion $R'_\Box\subseteq (\geq\circ R'_\Box\circ \geq)$ is immediate. Let $x,y,z,w\in F$ such that $x\geq y$, $y R'_\Box z$ and $z\geq w$. We will prove that $x R'_\Box w$. Since $R'_\Box$ satisfies (M), $x R'_\Box z$. Since $R'_\Box(x)$ is an downset of $\bF$ we get that $x R'_\Box w$.

The equality $(\leq\circ R_\Diamond'\circ \leq) = R_\Diamond'$ is proved in an analogous way, 
\end{proof}

For every forest frame $(\bF, R_\Box,R_\Diamond)$, let ${\bf G}({\bF})$ be the G\"odel algebra of downsets of $\bF$  
and let the maps $\beta,\delta: G(\bF)\to G(\bF)$ be as in the previous section:
for every $a\in G(\bF)$
\begin{equation}\label{eqBetaR}
 \beta(a)=\{y\in F\mid \forall z\in F, \; (y R_\Box z\Rightarrow z\in a)\}.
 \end{equation}
\begin{equation}\label{eqMQDiamond}
\delta(a)=\{y\in F\mid \exists z\in a,\;y R_\Diamond z\}.
\end{equation}
\begin{notation}
Along this section, we will make use of subscripts to distinguish a binary relation $R$ from those that we will write $R'$, $R''$, etc. More precisely, we will use symbols $R_\Box$, $R_\Diamond$ as usual for the binary relations of a forest frame and $R'_\Box$, $R'_\Diamond$, $R''_\Box$, $R''_\Diamond$ for derived binary relations on the same forest. In addition, we will denote by $\beta'$, $\delta'$ and $\beta''$, $\delta''$ the operations on ${\bf G}({\bF})$ defined as in (\ref{eqBetaR}) and (\ref{eqMQDiamond}) by the relations $R'_\Box$, $R'_\Diamond$ and $R''_\Box$, $R''_\Diamond$ respectively.
\end{notation}
\begin{lemma} \label{diamond} Let  $(\bF, R_\Box,R_\Diamond)$ be a forest frame, then
\begin{enumerate}
\item $\delta(a)=\delta'(a)$ for all $a\in G(\bF)$.
\item $\beta(a)=\beta'(a)$ for all $a\in G(\bF)$.
\end{enumerate}

\end{lemma}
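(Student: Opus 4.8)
The plan is to prove both equalities by double inclusion, and the crucial ingredient in each nontrivial direction will be the fact, recalled from Section~\ref{sec:algandFortests}, that every $a\in G(\bF)$ is a downset of $\bF$. Before splitting into cases I would record one elementary observation: since $\leq$ is reflexive, taking the connecting element in (\ref{eqRprime}) to be the point itself gives $R_\Diamond\subseteq R'_\Diamond$ and $R_\Box\subseteq R'_\Box$. Indeed $x R_\Diamond z$ together with $z\leq z$ yields $x R'_\Diamond z$, and likewise $x R_\Box z$ with $z\geq z$ yields $x R'_\Box z$.

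For part (1), the inclusion $\delta(a)\subseteq\delta'(a)$ is immediate from $R_\Diamond\subseteq R'_\Diamond$, because enlarging the relation can only add witnesses to the existential condition defining $\delta$. For the reverse inclusion I would take $y\in\delta'(a)$, so there is some $z\in a$ with $y R'_\Diamond z$; unfolding the definition of $R'_\Diamond$, there is a $w\in F$ with $y R_\Diamond w$ and $w\leq z$. Since $a$ is a downset and $z\in a$, we get $w\in a$, and then $y R_\Diamond w$ with $w\in a$ witnesses $y\in\delta(a)$.

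For part (2), the inclusion $\beta'(a)\subseteq\beta(a)$ is immediate from $R_\Box\subseteq R'_\Box$: the condition defining $\beta$ quantifies universally over $R_\Box$-successors, and these form a subset of the $R'_\Box$-successors, so the $R'_\Box$-condition is the stronger one. For the reverse, I would take $y\in\beta(a)$ and any $z$ with $y R'_\Box z$; then there is a $w$ with $y R_\Box w$ and $z\leq w$. As $y\in\beta(a)$ and $y R_\Box w$, we obtain $w\in a$, and since $a$ is a downset with $z\leq w$, also $z\in a$. Hence $y\in\beta'(a)$.

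I do not expect a serious obstacle here; the two nontrivial inclusions are mirror images of one another, each reducing to the downward closure of $a$. The only point requiring care is keeping the direction of the order straight across the two cases: for $R'_\Box$ the auxiliary element $w$ lies \emph{above} $z$, whereas for $R'_\Diamond$ it lies \emph{below} $z$, and it is precisely the downset property of $a$ that licenses the passage from $w$ back to $z$ in the $\beta$-case and from $z$ up to $w\in a$ in the $\delta$-case.
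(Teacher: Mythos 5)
Your proof is correct and follows essentially the same route as the paper's: the two nontrivial inclusions $\delta'(a)\subseteq\delta(a)$ and $\beta(a)\subseteq\beta'(a)$ are obtained exactly as in the paper, by unfolding the definitions of $R'_\Diamond$ and $R'_\Box$ and invoking the downward closure of $a\in G(\bF)$. The only (harmless) difference is that you make explicit, via reflexivity of $\leq$, the inclusions $R_\Diamond\subseteq R'_\Diamond$ and $R_\Box\subseteq R'_\Box$ underlying the easy directions, which the paper dismisses as immediate.
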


\begin{proof}
(1) We will prove $\delta'(a)\subseteq \delta(a)$. The other inclusion follows immediately. Let $y\in \delta'(a)$. Then, there exists $z\in a$ such that $y R'_\Diamond z$. From definition of $R'_\Diamond$ there exists $w\in F$ such that $y R_\Diamond w$ and $w\leq z$. Since $a\in G(\bF)$, $a$ is a downward closed subset of $\bF$ and we get that $w\in a$. Therefore $y\in \delta(a)$.

(2) We will prove $\beta(a)\subseteq \beta'(a)$. The other inclusion follows immediately. Let $y\in \beta(a)$. Let $z\in F$ such that $y R'_\Box z$, we will prove that $z\in a$. From definition of $R'_\Box$ there exists $w\in F$ such that $y R_\Box w$ and $z\leq w$. Then, $w\in a$ and since $a\in G(\bF)$, $a$ is a downward closed subset of $\bF$ and we get that $z\in a$. Therefore $y\in \beta'(a)$.
\end{proof}

From the previous lemma we get that the forest frames $(\bF, R_\Box,R_\Diamond)$ and $(\bF, R'_\Box,R'_\Diamond)$ induce the same G\"odel algebra with operators, i.e.,  $({\bf G}({\bF}),\beta,\delta)=({\bf G}({\bF}),\beta',\delta')$.  

In \cite{Orlo} Or{\l}owska and Rewitzky defined a class of relational frames, based on posets, being a dual semantics for Heyting algebras with operators. Our interest now is to compare forest frames with them. For this, we will define Or{\l}owska and Rewitzky frames on forests as follows.
\begin{definition}
An {\em Or{\l}owska-Rewitzky frame} (or {\em OR-frame} for short) is a triple $(\bF,R_\Box,R_\Diamond)$ where $\bF$ is a forest and $R_\Box,R_\Diamond\subseteq F\times F$ satisfy the following conditions:
\begin{enumerate}
\item[(OR1)] $(\geq \circ R_\Box \circ \geq) \subseteq R_\Box$,
\item[(OR2)] $(\leq \circ R_\Diamond \circ \leq) \subseteq R_\Diamond$.
\end{enumerate}
\end{definition}
As a follow-up of the above Remark \ref{remMA}, it is interesting to notice that, similarly to (M) and (A) of Definition \ref{def1}, also the aforementioned properties (OR1) and (OR2), have indeed been considered in the paper \cite{BoDo} by Bo\v{z}i\'c and Do\v{s}en. These properties, that as we will see in a while are more specific than (M) and (A) above, are those that respectively define in \cite{BoDo} the so called {\em strictly condensed H$_\Box$} and {\em stricly condensed H$_\Diamond$} frames.

The next result is a direct consequence of Proposition \ref{prop:Rprime} and Lemma \ref{diamond}.
\begin{corollary}\label{cor:HKframe}
 Let $(\bF, R_\Box, R_\Diamond)$
be a forest frame. Then, $(\bF, R'_\Box, R'_\Diamond)$ is a OR-frame. Moreover, $({\bf G}({\bF}), \beta, \delta)=({\bf G}({\bF}), \beta', \delta')$. 
\end{corollary}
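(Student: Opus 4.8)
The plan is to read both assertions directly off the two preceding results, since all the substantive work has already been done there. For the first assertion, I would begin by recalling that an OR-frame is a triple $(\bF, R_\Box, R_\Diamond)$ in which $\bF$ is a forest and $R_\Box, R_\Diamond\subseteq F\times F$ satisfy (OR1) and (OR2). Here $\bF$ is already a forest by hypothesis, and the primed relations $R'_\Box, R'_\Diamond\subseteq F\times F$ by their very definition in (\ref{eqRprime}); indeed Proposition \ref{prop:Rprime}(1) already guarantees that $(\bF, R'_\Box, R'_\Diamond)$ is itself a forest frame. It therefore remains only to verify (OR1) and (OR2) for the primed relations.

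That verification is immediate from Proposition \ref{prop:Rprime}(3), which establishes the \emph{equalities} $(\geq\circ R'_\Box\circ \geq) = R'_\Box$ and $(\leq\circ R'_\Diamond\circ \leq) = R'_\Diamond$. Since an equality entails the corresponding inclusion, conditions (OR1) and (OR2) hold at once, and hence $(\bF, R'_\Box, R'_\Diamond)$ is an OR-frame. For the second assertion I would simply invoke Lemma \ref{diamond}, which states precisely that $\delta(a)=\delta'(a)$ and $\beta(a)=\beta'(a)$ for every $a\in G(\bF)$. As the two operators agree on every element of $G(\bF)$, we conclude $\beta=\beta'$ and $\delta=\delta'$ as maps on the (unchanged) G\"odel algebra ${\bf G}({\bF})$, so the GAOs $({\bf G}({\bF}), \beta, \delta)$ and $({\bf G}({\bF}), \beta', \delta')$ are literally the same structure.

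There is essentially no obstacle to surmount: the whole content of the corollary is contained in Proposition \ref{prop:Rprime} and Lemma \ref{diamond}. The only point that merits a moment's care is to match the definition of an OR-frame term-by-term, observing that (OR1)/(OR2) are stated as inclusions while Proposition \ref{prop:Rprime}(3) supplies the sharper equalities, so the passage requires no additional argument.
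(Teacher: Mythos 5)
Your proposal is correct and follows exactly the paper's route: the paper states the corollary is a direct consequence of Proposition \ref{prop:Rprime} and Lemma \ref{diamond}, and you derive (OR1)/(OR2) from the equalities in Proposition \ref{prop:Rprime}(3) and the identity of the algebras from Lemma \ref{diamond}, precisely as intended. The only difference is that you spell out the (trivial) passage from equalities to inclusions, which the paper leaves implicit.
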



Note that every OR-frame is a forest frame, but the converse is not always the case. In the following example we will show a forest frame that is not a OR-frame. 

\begin{example}\label{EXOrForest} Consider the following forest frame $(\{f_1,f_2,f_3\},\leq, R_\Box, R_\Diamond)$ where
\[R_\Box=\{(f_1, f_1), (f_2, f_3), (f_2, f_2), (f_3, f_1), (f_3, f_3)\}\]
as we see in Figure \ref{fig2} and \[R_\Diamond=\{(f_1, f_2), (f_1, f_3), (f_2, f_2), (f_2, f_1), (f_3, f_3)\}\]
as we see in Figure \ref{fig1}. If we compute $R'_\Box$ and $R'_\Diamond$ as in (\ref{eqRprime}) we get that:
\[R'_\Box=\{(f_1, f_1), (f_2, f_3), (f_2, f_2), (f_2, f_1),(f_3,f_1), (f_3, f_3)\}.\]
\begin{figure}
\begin{center}
\begin{tikzpicture}
  \node [label=below:{$f_2$}, label=below:{} ] (n1)  {} ;
  \node [right of=n1,label=below:{$f_1$}, label=below:{} ] (n2)  {} ;
  \node [above of=n2, label=above:{$f_3$}, label=below:{} ] (n3)  {} ;
  \draw [fill] (n1) circle [radius=.5mm];
    \draw [fill] (n2) circle [radius=.5mm];
      \draw [fill] (n3) circle [radius=.5mm];
   \draw (n2)--(n3);
       \draw [->,line width=0.07mm] (n1) edge[bend left] (n3);
          \draw [->,line width=0.07mm] (n3) edge[bend left] (n2);
                  \draw [->,line width=0.07mm] (n2) arc [radius=3.5mm, start angle=430, end angle= 90]  (n2);
           \draw [->,line width=0.07mm] (n1) arc [radius=3.5mm, start angle=430, end angle= 90]  (n1);
             \draw [->,line width=0.07mm] (n3) arc [radius=3.5mm, start angle=610, end angle= 270]  (n3);
\end{tikzpicture}
\hspace{.4cm}
\begin{tikzpicture}
  \node [label=below:{$f_2$}, label=below:{} ] (n1)  {} ;
  \node [right of=n1,label=below:{$f_1$}, label=below:{} ] (n2)  {} ;
  \node [above of=n2, label=above:{$f_3$}, label=below:{} ] (n3)  {} ;
  \draw [fill] (n1) circle [radius=.5mm];
    \draw [fill] (n2) circle [radius=.5mm];
      \draw [fill] (n3) circle [radius=.5mm];
   \draw (n2)--(n3);
    \draw [->,line width=0.07mm] (n1) edge[bend left] (n2);
       \draw [->,line width=0.07mm] (n1) edge[bend left] (n3);
          \draw [->,line width=0.07mm] (n3) edge[bend left] (n2);
                  \draw [->,line width=0.07mm] (n2) arc [radius=3.5mm, start angle=430, end angle= 90]  (n2);
           \draw [->,line width=0.07mm] (n1) arc [radius=3.5mm, start angle=430, end angle= 90]  (n1);
             \draw [->,line width=0.07mm] (n3) arc [radius=3.5mm, start angle=610, end angle= 270]  (n3);
\end{tikzpicture}
\caption{From left to right: The frame $(\{f_1,f_2,f_3\},\leq, R_\Box)$; the frame $(\{f_1,f_2,f_3\},\leq, R'_\Box)$.}
\label{fig2}
\end{center}
\end{figure}
and
\[R'_\Diamond=\{(f_1, f_2), (f_1, f_3), (f_2, f_2), (f_2, f_1),(f_2,f_3), (f_3, f_3)\}.\]
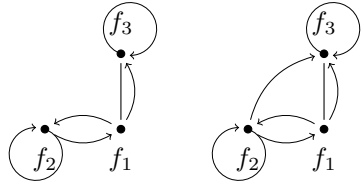
\begin{figure}
\begin{center}
\begin{tikzpicture}
  \node [label=below:{$f_2$}, label=below:{} ] (n1)  {} ;
  \node [right of=n1,label=below:{$f_1$}, label=below:{} ] (n2)  {} ;
  \node [above of=n2, label=above:{$f_3$}, label=below:{} ] (n3)  {} ;
  \draw [fill] (n1) circle [radius=.5mm];
    \draw [fill] (n2) circle [radius=.5mm];
      \draw [fill] (n3) circle [radius=.5mm];
   \draw (n2)--(n3);
    \draw [->,line width=0.07mm] (n1) edge[bend right] (n2);
          \draw [->,line width=0.07mm] (n2) edge[bend right] (n3);
                \draw [->,line width=0.07mm] (n2) edge[bend right] (n1);
           \draw [->,line width=0.07mm] (n1) arc [radius=3.5mm, start angle=430, end angle= 90]  (n1);
             \draw [->,line width=0.07mm] (n3) arc [radius=3.5mm, start angle=610, end angle= 270]  (n3);
\end{tikzpicture}
\hspace{.4cm}
\begin{tikzpicture}
  \node [label=below:{$f_2$}, label=below:{} ] (n1)  {} ;
  \node [right of=n1,label=below:{$f_1$}, label=below:{} ] (n2)  {} ;
  \node [above of=n2, label=above:{$f_3$}, label=below:{} ] (n3)  {} ;
  \draw [fill] (n1) circle [radius=.5mm];
    \draw [fill] (n2) circle [radius=.5mm];
      \draw [fill] (n3) circle [radius=.5mm];
   \draw (n2)--(n3);
    \draw [->,line width=0.07mm] (n1) edge[bend right] (n2);
       \draw [->,line width=0.07mm] (n1) edge[bend left] (n3);
          \draw [->,line width=0.07mm] (n2) edge[bend right] (n3);
                \draw [->,line width=0.07mm] (n2) edge[bend right] (n1);
           \draw [->,line width=0.07mm] (n1) arc [radius=3.5mm, start angle=430, end angle= 90]  (n1);
             \draw [->,line width=0.07mm] (n3) arc [radius=3.5mm, start angle=610, end angle= 270]  (n3);
\end{tikzpicture}
\caption{From left to right: The frame $(\{f_1,f_2,f_3\},\leq, R_\Diamond)$; the frame $(\{f_1,f_2,f_3\},\leq, R'_\Diamond)$.}
\label{fig1}
\end{center}
\end{figure}
\end{example}

Summing up, what we presented so far, shows that OR-frames form a class of relational frames strictly contained in that of forest frames. However, for every forest frame $(\bF, R_\Box, R_\Diamond)$, it is always possible to define an OR-frame $(\bF, R'_\Box, R'_\Diamond)$ based on the same forest $\bF$ such that they define the same GAO $({\bf G}({\bF}), \beta, \delta)$. 

Now, we turn our attention on the relational frames that correspond to those that satisfy the conditions of both Bo\v{z}i\'c and Do\v{s}en's $H_\Box$- and $H_\Diamond$-frames \cite{BoDo},  and then later considered by Palmigiano in \cite{Ale}. Again, we will consider the particular case of relational structures based on forests, rather than the more general case studied in \cite{Ale}.

\begin{definition}
A {\em Palmigiano frame} (or {\em P-frame} for short), is a triple $(\bF, R_\Box, R_\Diamond)$ where $\bF$ is a forest and $R_\Box, R_\Diamond\subseteq F\times F$ satisfy the following conditions:
\begin{enumerate}
\item[(P1)] $(\geq \circ R_\Box) \subseteq (R_\Box \circ \geq)$;
\item[(P2)] $(\leq \circ R_\Diamond) \subseteq (R_\Diamond \circ \leq)$.
\end{enumerate}
\end{definition}
Our next result shows that P-frames include forest frames.

\begin{proposition} Every forest frame $(\bF,R_\Box,R_\Diamond)$is a P-frame. 
\end{proposition}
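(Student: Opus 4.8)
The plan is to show that the forest-frame conditions (M) and (A) are in fact \emph{stronger} than the P-frame conditions (P1) and (P2), so that the required inclusions are essentially immediate. I would work throughout with the relational reformulation of (M) and (A) recorded in Remark \ref{remMA}, namely $(\geq \circ R_\Box) \subseteq R_\Box$ and $(\leq \circ R_\Diamond) \subseteq R_\Diamond$, rather than with the pointwise versions, since (P1) and (P2) are themselves stated as inclusions of composed relations.

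The first step is to record the two trivial inclusions $R_\Box \subseteq (R_\Box \circ \geq)$ and $R_\Diamond \subseteq (R_\Diamond \circ \leq)$. These hold purely by reflexivity of $\geq$ and $\leq$: if $x R_\Box z$, then taking the witness $w = z$ gives $x R_\Box w$ together with $w \geq z$, hence $x\,(R_\Box \circ \geq)\,z$; the argument for $R_\Diamond$ is symmetric, using $z \leq z$.

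The second step is to chain inclusions. For (P1), combining (M) in the form $(\geq \circ R_\Box) \subseteq R_\Box$ with the trivial inclusion just noted yields
\[
(\geq \circ R_\Box) \subseteq R_\Box \subseteq (R_\Box \circ \geq),
\]
which is exactly (P1). The verification of (P2) is the mirror image: from (A) in the form $(\leq \circ R_\Diamond) \subseteq R_\Diamond$ and $R_\Diamond \subseteq (R_\Diamond \circ \leq)$ we obtain $(\leq \circ R_\Diamond) \subseteq R_\Diamond \subseteq (R_\Diamond \circ \leq)$, which is (P2).

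I do not expect a genuine obstacle here; the proof reduces to a two-line chain of inclusions. The only point requiring care is to keep the composition convention fixed (so that $a\,(S \circ T)\,c$ holds iff there is some $b$ with $a\,S\,b$ and $b\,T\,c$), so that the relational rewriting of (M) and (A) from Remark \ref{remMA} lines up correctly with the antecedents $(\geq \circ R_\Box)$ and $(\leq \circ R_\Diamond)$ appearing in (P1) and (P2). Once the convention is pinned down, the argument makes transparent \emph{why} forest frames form a subclass of P-frames: passing from (M)/(A) to (P1)/(P2) only weakens the conclusion by replacing the target relation $R$ with the larger composite $R \circ \geq$ (respectively $R \circ \leq$).
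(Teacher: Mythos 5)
Your proof is correct and follows essentially the same route as the paper's: the paper also establishes the chains $(\geq\circ R_\Box)\subseteq R_\Box \subseteq (R_\Box\circ \geq)$ and $(\leq \circ R_\Diamond)\subseteq R_\Diamond\subseteq (R_\Diamond\circ \leq)$, from which (P1) and (P2) follow immediately. Your version simply spells out the reflexivity argument for the trivial inclusions and the composition convention, which the paper leaves implicit.
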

\begin{proof}
Let $(\bF,R_\Box,R_\Diamond)$ be a forest frame. It is easy to see that $(\geq\circ R_\Box)\subseteq R_\Box \subseteq (R_\Box\circ \geq
)$ and $(\leq \circ R_\Diamond)\subseteq R_\Diamond\subseteq (R_\Diamond\circ \leq)$ and the result follows. 
\end{proof}

So, in particular we have that every OR-frame is a forest frame and every forest frame is a P-frame  over the same ordered set. So we have an inclusion of frame classes.

Now, given a P-frame $(\bF, R_\Box, R_\Diamond)$, consider the following relations $R_\Box'',R_\Diamond''\subseteq F\times F$ defined by:
\begin{equation}\label{eqRsecond}
R_\Box''=(\geq \circ R_\Box)\text{ and }R_\Diamond''=(\leq \circ R_\Diamond).
\end{equation}

\begin{proposition} Let $(\bF, R_\Box, R_\Diamond)$ be a P-frame. Then, $(\bF,R_\Box'',R_\Diamond'')$ is a forest frame such that
\begin{enumerate}
\item $\delta(a)=\delta''(a)$ for all $a\in G(\bF)$.
\item $\beta(a)=\beta''(a)$ for all $a\in G(\bF)$.
\end{enumerate}
\end{proposition}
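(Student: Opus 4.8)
The plan is to treat the two assertions separately, reducing everything to the defining inclusions (P1) and (P2) of a P-frame together with the single fact that each $a\in G(\bF)$ is a downset. The key conceptual point, which I would flag from the outset, is that a P-frame is strictly more general than a forest frame, so the given relations $R_\Box, R_\Diamond$ need \emph{not} satisfy (M) and (A); consequently the arguments from Lemma \ref{diamond} cannot be transcribed verbatim, and (P1), (P2) must take over the role that (M), (A) played there.

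First I would check that $(\bF, R_\Box'', R_\Diamond'')$ is a forest frame, i.e. that $R_\Box''$ satisfies (M) and $R_\Diamond''$ satisfies (A). This is the routine part and follows purely from transitivity of the order. Since $R_\Box''=(\geq\circ R_\Box)$, verifying (M) amounts to showing $(\geq\circ\geq\circ R_\Box)\subseteq(\geq\circ R_\Box)$: concretely, if $x\leq y$ and $x R_\Box'' z$, pick $w$ with $x\geq w$ and $w R_\Box z$; then $y\geq x\geq w$ gives $y R_\Box'' z$. The verification of (A) for $R_\Diamond''=(\leq\circ R_\Diamond)$ is symmetric, using transitivity of $\leq$.

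Next I would prove $\delta(a)=\delta''(a)$. The inclusion $\delta(a)\subseteq\delta''(a)$ is free: reflexivity of $\leq$ yields $R_\Diamond\subseteq R_\Diamond''$, and $\delta$ is monotone in its relation. For $\delta''(a)\subseteq\delta(a)$, take $y\in\delta''(a)$; then there are $w,z$ with $y\leq w$, $w R_\Diamond z$ and $z\in a$, that is $y\,(\leq\circ R_\Diamond)\,z$. Here (P2) enters: it gives $y\,(R_\Diamond\circ\leq)\,z$, so there is $z'\leq z$ with $y R_\Diamond z'$; since $a$ is a downset and $z\in a$, also $z'\in a$, whence $y\in\delta(a)$. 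The identity $\beta(a)=\beta''(a)$ is dual: $\beta''(a)\subseteq\beta(a)$ is immediate from $R_\Box\subseteq R_\Box''$ (the larger relation makes the defining universal condition of $\beta''$ the stronger one), while for $\beta(a)\subseteq\beta''(a)$ one takes $y\in\beta(a)$, assumes $y R_\Box'' z$, i.e. $y\,(\geq\circ R_\Box)\,z$, and applies (P1) to obtain $z'\geq z$ with $y R_\Box z'$; then $z'\in a$ by hypothesis and $z\in a$ by the downset property.

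The main obstacle, such as it is, is precisely the two nontrivial inclusions $\delta''(a)\subseteq\delta(a)$ and $\beta(a)\subseteq\beta''(a)$: one must avoid the tempting but unavailable use of (M)/(A), and instead invoke (P1)/(P2) to convert a composition on the ``wrong'' side of the relation into one on the correct side, closing each argument through the downset property of $a$. Everything else is bookkeeping about reflexivity and transitivity of the underlying order, and the overall shape mirrors Lemma \ref{diamond} with (P1), (P2) substituted for (M), (A).
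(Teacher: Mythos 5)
Your proposal is correct and follows essentially the same route as the paper's proof: the easy inclusions come from $R_\Box\subseteq R_\Box''$ and $R_\Diamond\subseteq R_\Diamond''$ (reflexivity of the order), while the nontrivial inclusions $\delta''(a)\subseteq\delta(a)$ and $\beta(a)\subseteq\beta''(a)$ are obtained exactly as in the paper by invoking (P2), resp.\ (P1), to move the order-composition to the other side of the relation and then closing with the downset property of $a$. The only difference is that you spell out the verification of (M) and (A) for $R_\Box''$ and $R_\Diamond''$, which the paper dismisses as immediate.
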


\begin{proof}
It is immediate to see that $(\bF,R_\Box'',R_\Diamond'')$ is a forest frame.

1. Let $a\in G(\bF)$. Since $R_\Diamond\subseteq R_\Diamond''$, $\delta(a)\subseteq \delta''(a)$. Let $x\in \delta''(a)$. Then, $R_\Diamond''(x)\cap a\neq \emptyset$. Let $y\in R_\Diamond''(x)$ such that $y\in a$. Since $(\bF, R_\Box, R_\Diamond)$ is a P-frame we have that $R_\Diamond''\subseteq (R_\Diamond\circ \leq)$. So, there exists $z\in F$ such that $x R_\Diamond z $ and $z\leq y$. Thus, since $a$ is a downset, $z\in a$. Therefore $R_\Diamond(x)\cap a\neq \emptyset$ and $\delta''(a)\subseteq \delta(a)$.

2. Let $a\in G(\bF)$. Since $R_\Box\subseteq R_\Box''$, $\beta''(a)\subseteq \beta(a)$. Let $x\in \beta(a)$. Then, $R_\Box(x)\subseteq a$. Let $y\in R_\Box''(x)$. Since $(\bF, R_\Box, R_\Diamond)$ is a P-frame, we have that $R_\Box''\subseteq (R_\Box\circ \geq)$. Therefore, there exists $z\in F$ such that $x R_\Box z $ and $y\leq z$. Thus, $z\in a$ and since $a$ is a downset, $y\in a$. Therefore $R_\Box''(x)\subseteq a$ and $\beta(a)\subseteq \beta''(a)$.
\end{proof}
The next example shows that forest frames form a proper subclass of P-frames. Thus, together with the above results and Example \ref{EXOrForest}, we have that OR-frames are strictly contained in the class of forest frames that, in turn, are strictly contained in P-frames.
\begin{example}\label{EXPForest}
Consider the  forest $\bF$ (tree) depicted as in Figure \ref{FigTree}.
 \begin{figure}[h]
 \begin{center}
 \begin{tikzpicture}
\node [label=right:{${x}$}, label=below:{ }] (n1)  {} ;
\node [above  of=n1,label=right:{${y}$}, label=below:{ }] (n2)  {} ;
\node [above right of=n2,label=right:{${k}$}, label=below:{ }] (n3)  {} ;
\node [above left of=n2,label=left:{${z}$}, label=below:{ }] (n4)  {} ;

  \draw  (n1) -- (n2);
  \draw  (n2) -- (n3);
    \draw  (n2) -- (n4);
     
    \draw [fill] (n1) circle [radius=.5mm];
  \draw [fill] (n2) circle [radius=.5mm];
  \draw [fill] (n3) circle [radius=.5mm];
  \draw [fill] (n4) circle [radius=.5mm];
   \end{tikzpicture}
 \end{center}
 \caption{A tree with 4 points used in Example \ref{EXPForest}.}\label{FigTree}
 \end{figure}
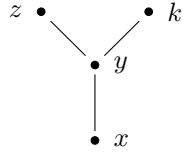
  Consider the following relations  $R_\Box$ and $R_\Diamond$ on $F$:
$$
R_\Box=\{(x,y),(y,z),(z,z),(k,z)\}
$$
and 
$$
R_\Diamond=\{(x,x),(y,y)\}
$$
Now, we compute the following composed  relations: 
$$
(\geq \circ R_\Box)=\{(x,y),(y,y),(y,z),(z,y),(z,z),(k,y),(k,z)\}
$$
and 
$$
(R_\Box\circ \geq) =\{(x,x),(x,y),(y,x),(y,y),(y,z),(z,x),(z,y),(z,z),(k,x),(k,y),(k,z)\}.
$$
Moreover, we have: 
$$
(\leq \circ R_\Diamond)=\{(x,x),(x,y),(y,y)\}
$$
and 
$$
(R_\Diamond\circ \leq) =\{(x,x),(x,y),(x,z),(x,k),(y,y),(y,z),(y,k)\}.
$$
This shows that $(\bF,R_\Box, R_\Diamond)$ is a P-frame but it  is not a forest frame.
\end{example}
Now, we end this section with a (graphical) comparison between the characterizing properties for $R_\Box$ and $R_\Diamond$ of OR-frames, forest-frames and P-frames. 
\begin{remark}
So far, we have considered three kind of relational structures based on forests, namely forest frames, OR-frames, and P-frames. For the next comparison, let us recall what properties are asked for the binary relations $R_\Box$ an $R_\Diamond$ in each of the aforementioned models.

As for forest frames, we have the following two properties to be satisfied by $R_\Box$ and $R_\Diamond$ respectively.
\begin{itemize}
\item[$(M)$]  for all $x, y, z\in F$, if $x\leq y$ and $x R_\Box z $, then  $y R_\Box z$;
\item[$(A)$] for all $x, y, z\in F$, if $y\leq x$ and $x R_\Diamond z $, then  $y R_\Diamond z$.
\end{itemize}
As for OR-frames and P-frames, let us express (OR1), (OR2), (P1) and (P2) by first-order formulas as follows:
\begin{itemize}
\item[(OR1)] for all $x,y\in F$, if there exist $z,w\in F$ such that $x\geq z$, $z R_\Box w$ and $w\geq y$, then $x R_\Box y$. 
\item[(OR2)] for all $x,y\in F$, if there exist $z,w\in F$ such that $x\leq z$, $z R_\Diamond w$ and $w\leq y$, then $x R_\Diamond y$. 
\item[(P1)] for all $x,y\in F$, if there exists $z\in F$ such that $x\geq z$ and $z R_\Box y$, then there exists $w\in F$ such that $x R_\Box w$ and $w\geq y$.
\item[(P2)] for all $x,y\in F$, if there exists $z\in F$ such that $x\leq z$ and $z R_\Diamond y$, then there exists $w\in F$ such that $x R_\Diamond w$ and $w\leq y$.
\end{itemize}
Figures \ref{Fig:RelationsBox} and \ref{Fig:RelationsDiamond} below present a graphical representation for the properties recalled above. The dashed arrows will represent the final relation corresponding to the right-hand side of the above quasi-equations. Also, for a point $a$, we will label it by $\exists a$ (instead of simply $a$) to highlight that the existence of $a$ is ensured by the left-hand side of the quasi-equations above. In particular, this is the case of (P1) and (P2). 
\begin{figure}[h]
\begin{center}
\begin{tikzpicture}
  \node [label=below:{$z$}, label=below:{} ] (n1)  {} ;
  \node [left of=n1,label=below:{$x$}, label=below:{} ] (n2)  {} ;
  \node [above of=n2, label=above:{$y$}, label=below:{} ] (n3)  {} ;
  \draw [fill] (n1) circle [radius=.5mm];
    \draw [fill] (n2) circle [radius=.5mm];
      \draw [fill] (n3) circle [radius=.5mm];
   \draw (n2)--(n3);
       \draw [->] (n2) edge[] (n1);
          \draw [->,dashed] (n3) edge[] (n1);
\end{tikzpicture}
\hspace{2cm}
\begin{tikzpicture}
  \node [label=below:{$y$}, label=below:{} ] (n1)  {} ;
  \node [above of=n1, label=above:{$w$}, label=below:{} ] (n4)  {} ;
  \node [left of=n1,label=below:{$z$}, label=below:{} ] (n2)  {} ;
  \node [above of=n2, label=above:{$x$}, label=below:{} ] (n3)  {} ;
  \draw [fill] (n1) circle [radius=.5mm];
    \draw [fill] (n2) circle [radius=.5mm];
      \draw [fill] (n3) circle [radius=.5mm];
      \draw [fill] (n4) circle [radius=.5mm];
   \draw (n2)--(n3);
   \draw (n1)--(n4);
    \draw [->,dashed] (n3) edge[] (n1);
       \draw [->] (n2) edge[] (n4);
\end{tikzpicture}
\hspace{2cm}
\begin{tikzpicture}
  \node [label=below:{$y$}, label=below:{} ] (n1)  {} ;
  \node [above of=n1, label=above:{$\exists w$}, label=below:{} ] (n4)  {} ;
  \node [left of=n1,label=below:{$z$}, label=below:{} ] (n2)  {} ;
  \node [above of=n2, label=above:{$x$}, label=below:{} ] (n3)  {} ;
  \draw [fill] (n1) circle [radius=.5mm];
    \draw [fill] (n2) circle [radius=.5mm];
      \draw [fill] (n3) circle [radius=.5mm];
      \draw [fill] (n4) circle [radius=.5mm];
   \draw (n2)--(n3);
   \draw (n1)--(n4);
    \draw [->] (n2) edge[] (n1);
          \draw [->,dashed] (n3) edge[] (n4);
\end{tikzpicture}

\end{center}
\caption{A graphical representation of the properties (M) (left-hand side), (OR1)  (center) and (P1) (right-hand side).}\label{Fig:RelationsBox}
\end{figure}
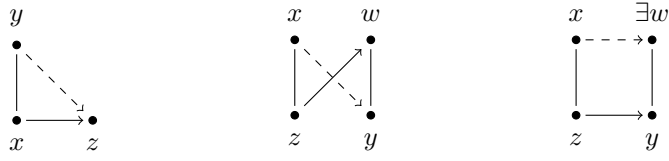

\begin{figure}[h]
\begin{center}
\begin{tikzpicture}
  \node [label=above:{$z$}, label=below:{} ] (n1)  {} ;
  \node [left of=n1,label=above:{$x$}, label=below:{} ] (n2)  {} ;
  \node [below of=n2, label=below:{$y$}, label=below:{} ] (n3)  {} ;
  \draw [fill] (n1) circle [radius=.5mm];
    \draw [fill] (n2) circle [radius=.5mm];
      \draw [fill] (n3) circle [radius=.5mm];
   \draw (n2)--(n3);
       \draw [->] (n2) edge[] (n1);
          \draw [->,dashed] (n3) edge[] (n1);
\end{tikzpicture}
\hspace{2cm}
\begin{tikzpicture}
  \node [label=below:{$w$}, label=below:{} ] (n1)  {} ;
  \node [above of=n1, label=above:{$y$}, label=below:{} ] (n4)  {} ;
  \node [left of=n1,label=below:{$x$}, label=below:{} ] (n2)  {} ;
  \node [above of=n2, label=above:{$z$}, label=below:{} ] (n3)  {} ;
  \draw [fill] (n1) circle [radius=.5mm];
    \draw [fill] (n2) circle [radius=.5mm];
      \draw [fill] (n3) circle [radius=.5mm];
      \draw [fill] (n4) circle [radius=.5mm];
   \draw (n2)--(n3);
   \draw (n1)--(n4);
    \draw [->,dashed] (n2) edge[] (n4);
       \draw [->] (n3) edge[] (n1);
\end{tikzpicture}
\hspace{2cm}
\begin{tikzpicture}
  \node [label=below:{$\exists w$}, label=below:{} ] (n1)  {} ;
  \node [above of=n1, label=above:{$y$}, label=below:{} ] (n4)  {} ;
  \node [left of=n1,label=below:{$x$}, label=below:{} ] (n2)  {} ;
  \node [above of=n2, label=above:{$z$}, label=below:{} ] (n3)  {} ;
  \draw [fill] (n1) circle [radius=.5mm];
    \draw [fill] (n2) circle [radius=.5mm];
      \draw [fill] (n3) circle [radius=.5mm];
      \draw [fill] (n4) circle [radius=.5mm];
   \draw (n2)--(n3);
   \draw (n1)--(n4);
    \draw [->,dashed] (n2) edge[] (n1);
          \draw [->] (n3) edge[] (n4);
\end{tikzpicture}

\end{center}
\caption{A graphical representation of the properties (A) (left-hand side), (OR2)  (center) and (P2) (right-hand side).}\label{Fig:RelationsDiamond}
\end{figure}
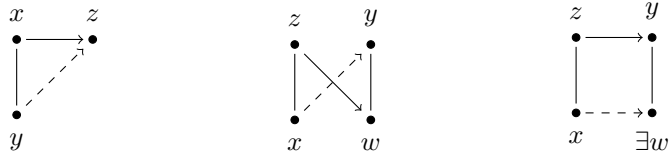

\end{remark}

\section{Adding structure to G\"odel algebras with operators}\label{extensions}

 In this section we will be concerned with two  extensions of G\"odel algebras with operators and their forest frame semantics. The first one is the subvariety $\mathbb{DGAO}$ of $\mathbb{GAO}$ obtained by the equations (D1) and (D2)  below:
 \begin{itemize}
\item[$(D1)$] $\Box(a\vee b)\leq \Box a\vee \Diamond b$;
\item[$(D2)$] $\Box a\wedge \Diamond b\leq \Diamond (a\wedge b)$.
\end{itemize}
The above axioms  have been firstly studied by Dunn in \cite{Dunn} in the logical setting of the positive fragment of classical (or intuitionistic) logic. The algebras in $\mathbb{DGAO}$ will be henceforth called {\em Dunn} GAOs. 

Before moving to the second variety, let us show an interesting property of Dunn GAOs, the fact that the modal operators are closed on the set of Boolean elements algebras in $\mathbb{DGAO}$.
Let $(\mathbf{A},\Box,\Diamond)$ be a GAO and let us consider the set of Boolean elements of $\mathbf{A}$:
\[B(\mathbf{A})=\{x\in A: x\vee\neg x=\top\}.\] Then the following holds.
\begin{proposition} \label{bool}
Let $(\mathbf{A},\Box,\Diamond)$ be a Dunn GAO, then $\Box a,\Diamond a\in B(\mathbf{A})$ for all $a\in B(\mathbf{A})$.
\end{proposition}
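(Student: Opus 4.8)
The plan is to exhibit, for each Boolean $a$, an explicit complement of $\Box a$ and of $\Diamond a$, and then invoke the fact that in a G\"odel (Heyting) algebra any complemented element is automatically Boolean. Recall first that for every $x\in A$ one always has $x\wedge\neg x=\bot$, while $x\vee\neg x=\top$ holds precisely when $x\in B(\mathbf{A})$. Consequently, if some element $y$ satisfies $\Box a\wedge y=\bot$ and $\Box a\vee y=\top$, then $y\leq\neg\Box a$ (as $\neg\Box a$ is the largest element disjoint from $\Box a$), and hence $\top=\Box a\vee y\leq\Box a\vee\neg\Box a\leq\top$, forcing $\Box a\vee\neg\Box a=\top$, i.e. $\Box a\in B(\mathbf{A})$. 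Thus it suffices to produce one complement in each of the two cases.

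For $\Box a$ I would take $\Diamond\neg a$ as the candidate complement. Applying the Dunn axiom $(D1)$ with $b=\neg a$ gives $\Box(a\vee\neg a)\leq\Box a\vee\Diamond\neg a$; since $a$ is Boolean, $a\vee\neg a=\top$, and by $(\Box1)$ the left-hand side equals $\top$, whence $\Box a\vee\Diamond\neg a=\top$. Dually, applying $(D2)$ with $b=\neg a$ gives $\Box a\wedge\Diamond\neg a\leq\Diamond(a\wedge\neg a)=\Diamond\bot=\bot$ by $(\Diamond1)$, so $\Box a\wedge\Diamond\neg a=\bot$. Hence $\Diamond\neg a$ complements $\Box a$, and $\Box a\in B(\mathbf{A})$.

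For $\Diamond a$ the symmetric choice $\Box\neg a$ works. Using $(D2)$ in the form $\Box\neg a\wedge\Diamond a\leq\Diamond(\neg a\wedge a)=\Diamond\bot=\bot$ yields $\Box\neg a\wedge\Diamond a=\bot$, and using $(D1)$ in the form $\Box(\neg a\vee a)\leq\Box\neg a\vee\Diamond a$ together with $\neg a\vee a=\top$ and $(\Box1)$ yields $\Box\neg a\vee\Diamond a=\top$. So $\Box\neg a$ complements $\Diamond a$, and $\Diamond a\in B(\mathbf{A})$.

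Since every computation is a direct substitution into the two Dunn inequalities together with the normality axioms $(\Box1)$ and $(\Diamond1)$, I do not anticipate any genuine difficulty. The only point deserving care is the lattice-theoretic observation used in the first paragraph, namely that a complemented element of a G\"odel algebra is Boolean; this rests on the uniqueness of complements in a distributive lattice (equivalently, on $\neg x$ being the largest element whose meet with $x$ is $\bot$), and it is precisely what allows me to pass from \emph{having a complement} to membership in $B(\mathbf{A})$.
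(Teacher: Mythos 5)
Your proof is correct and is essentially the same as the paper's: both apply $(D1)$ and $(D2)$ with $b=\neg a$ (resp.\ the symmetric instances) to obtain $\Box a\vee\Diamond\neg a=\top$ and $\Box a\wedge\Diamond\neg a=\bot$, and then pass to $\Box a\vee\neg\Box a=\top$ via the fact that $\neg\Box a$ is the largest element disjoint from $\Box a$. The only difference is presentational: you package that last step as the general lemma ``complemented elements are Boolean,'' whereas the paper carries out the same deduction ($\Diamond\neg a\leq\neg\Box a$, hence $\top=\Box a\vee\Diamond\neg a\leq\Box a\vee\neg\Box a$) inline.
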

\begin{proof}
Let $a\in B(\mathbf{A})$. Then, $\top=\Box(a\vee \neg a)\leq \Box a\vee \Diamond (\neg a)$. Also, $\Box a\wedge \Diamond (\neg a)\leq \Diamond (a\wedge \neg a)=\bot$. So, we get that $\Diamond (\neg a)\leq \neg \Box a$. Therefore, $\top=\Box a\vee \Diamond (\neg a)\leq \Box a\vee \neg \Box a$. And thus $\Box a\in B(\mathbf{A})$.

On the other hand, let $a\in B(\mathbf{A})$. Then, $\top=\Box(\neg a\vee a)\leq \Box(\neg a)\vee \Diamond a$. Also, $\Box(\neg a)\wedge \Diamond a \leq \Diamond( a\wedge \neg a)=\bot$. So, we get that $\Box (\neg a)\leq \neg \Diamond a$. Therefore, $\top=\Box (\neg a)\vee \Diamond a\leq \neg \Diamond a\vee \Diamond a$. And thus $\Diamond a\in B(\mathbf{A})$.
\end{proof}

The second is the variety $\mathbb{FSGAO}$ given by the well-known Fischer Servi axioms (FS1) and (FS2):

 \begin{itemize}
\item[$(FS1)$] $\Diamond (a\to b)\leq (\Box a\to \Diamond b)$;
\item[$(FS2)$] $(\Diamond a\to \Box b)\leq \Box (a\to b)$.
\end{itemize}
Algebras in $\mathbb{FSGAO}$ will be called {\em Fischer Servi} GAOs.
 
In  \cite{Palmigiano2}  it has been  proved that (D2) and (FS1) are  equivalent. For the sake of completeness, we provide another proof of this fact in the result below.
\begin{proposition}\label{prop:D2-FS2}
Every GAO $({\bf A}, \Box,\Diamond)$ satisfies (D2) iff it satisfies (FS1). 
\end{proposition}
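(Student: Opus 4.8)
The plan is to prove both implications directly from the GAO axioms, exploiting the residuation property of the G\"odel implication, namely that $x \leq y \to z$ if and only if $x \wedge y \leq z$. This adjunction turns each of the inequalities (D2) and (FS1) into an equivalent inequality with a meet on the left, and the goal is to show these transformed inequalities coincide or are easily interderivable.

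First I would prove that (FS1) implies (D2). Starting from $\Diamond(a \to b) \leq (\Box a \to \Diamond b)$, I would instantiate or manipulate the implication so as to recover a meet-form inequality. The key observation is that $\Box a \wedge \Diamond(a \to b) \leq \Diamond b$ by residuation applied to (FS1). To reach (D2), which reads $\Box a \wedge \Diamond b \leq \Diamond(a \wedge b)$, I would substitute suitable terms: replacing $b$ by $a \wedge b$ and using that $a \to (a \wedge b) \geq b$ (indeed $a \wedge b \leq a \wedge b$ gives $b \leq a \to (a\wedge b)$), together with monotonicity of $\Diamond$, should yield $\Diamond b \leq \Diamond(a \to (a\wedge b))$, and then combining with the residuated form of (FS1) instantiated at $a$ and $a \wedge b$ gives $\Box a \wedge \Diamond b \leq \Box a \wedge \Diamond(a \to (a \wedge b)) \leq \Diamond(a \wedge b)$.

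Conversely, to show (D2) implies (FS1), I would again pass through residuation: (FS1) is equivalent to $\Box a \wedge \Diamond(a \to b) \leq \Diamond b$. Applying (D2) with the pair $a$ and $a \to b$ gives $\Box a \wedge \Diamond(a \to b) \leq \Diamond(a \wedge (a \to b))$. The crucial G\"odel-algebra identity here is $a \wedge (a \to b) = a \wedge b \leq b$, so by monotonicity of $\Diamond$ we obtain $\Diamond(a \wedge (a\to b)) \leq \Diamond b$, which chains together to give exactly the residuated form of (FS1).

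The main obstacle I anticipate is bookkeeping the correct substitutions and verifying the auxiliary Heyting-algebra identities (such as $a \wedge (a \to b) = a \wedge b$ and the residuation adjunction) rather than any deep conceptual difficulty; the argument is essentially a symmetric pair of short derivations once the correct instances of the axioms are chosen. Care is needed to ensure that monotonicity of $\Diamond$ (which follows from $(\Diamond2)$) and of $\Box$ is invoked legitimately, and that the passage between the implicational and meet forms of each axiom is justified by residuation, which holds since the underlying G\"odel algebra is a Heyting algebra.
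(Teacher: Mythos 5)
Your proposal is correct and follows essentially the same route as the paper's own proof: both directions rest on residuation, the instance of (D2) at the pair $(a, a\to b)$ together with the identity $a \wedge (a\to b) = a \wedge b$, and, for the converse, the inequality $b \leq a \to (a\wedge b)$ combined with monotonicity of $\Diamond$ and (FS1) instantiated at $(a, a\wedge b)$. If anything, you spell out the monotonicity step more explicitly than the paper does, so there is nothing to fix.
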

\begin{proof}
Let us start assuming that $({\bf A}, \Box,\Diamond)$ satisfies (D2), that is, for all $a, b\in A$, $\Box a\wedge \Diamond b\leq \Box(a\wedge b)$. Let us show that $({\bf A}, \Box,\Diamond)$ thus satisfies (FS2): $\Diamond (a\to b)\leq \Box a\to \Diamond b$. The latter, by residuation, is equivalent to $\Diamond (a\to b)\wedge \Box a\leq  \Diamond b$. Now, by (D2) $\Box a\wedge \Diamond (a\to b)\leq \Diamond(a\wedge (a\to b))$ and the latter equals $\Diamond (a\wedge b)\leq \Diamond b$. 

Conversely, let us assume that $({\bf A}, \Box,\Diamond)$ satisfies (FS1), that is, for all $a, b \in A$, $\Diamond (a\to b)\leq \Box a\to \Diamond b$. By residuation, $\Diamond b\leq \Diamond (a\to (a\wedge b))$ and by (FS1), $\Diamond (a\to (a\wedge b))\leq \Box a \to \Diamond (a\wedge b)$. Thus, we conclude that $\Diamond b\wedge \Box a\leq \Diamond (a\wedge b)$. 
\end{proof}

A full comparison between Dunn's and Fischer Servi's axioms, at best of our knowledge, has not be presented. This section is hence dedicated to a comparison between these two axiom schema.
 
 By Corollary \ref{corollarySubclass}, it is clear that each GAO in either $\mathbb{DGAO}$ or $\mathbb{FSGAO}$ has an isomorphic representation, within the same classes $\mathbb{DGAO}$ and $\mathbb{FSGAO}$ respectively, in the sense of Theorem \ref{thm:isoBox}. However, in these particular cases, it is possible to consider special forest frames with only one accessibility relation that, equivalently to the forest frames studied in Subsection \ref{sec:tworelations}, allows to recover, up to isomorphism, the G\"odel algebra with operators we started with. Let us hence introduce the following.
 
 \begin{definition} \label{basic}
 A {\em basic-frame} is a pair $(\bF, R)$ where $\bF$ is a finite forest, $R\subseteq F\times F$ and  there exist $R_\Box, R_\Diamond\subseteq F\times F$ such that:
 \begin{itemize}
 \item[(1)] $R_\Box$ satisfies (M) and $R_\Diamond$ satisfies (A);
 \item[(2)] $R=R_\Box\cap R_\Diamond$.
 \end{itemize}
 \end{definition}
  Clearly, every forest frame defines a basic-frame by taking $R=R_\Box\cap R_\Diamond$ and, vice-versa, if $(\bF, R)$ is a basic-frame where  $R=R_\Box\cap R_\Diamond$, then $(\bF, R_\Box, R_\Diamond)$ is a forest frame. 
 
 In order for our next result to be clear, let us introduce the following notation. For any GAO $({\bf A},\Box, \Diamond)$ be a GAO, in accordance with the notation used in the previous section, we will denote by $\bF_{\bf A}$ the forest of its prime filters, and by $R_\Box$ and $R_\Diamond$ the binary relations on $F_{\bf A}$ defined as in (\ref{eq:QBox}) and (\ref{eq:Qdiamond}) respectively.  
 Then, by Proposition \ref{prop:Rprime} (1), it follows that $(\bF_{\bf A}, R)$, where  $R_{\bf A}=R_\Box\cap R_\Diamond$, is a basic frame.
  %
Then, let $\beta$ and $\delta$ be the unary operators on ${\bf G}(\bF_{\bf A})$ defined as in \eqref{eqBetaR} and \eqref{eqMQDiamond} resp., 
 while $\beta_{R_{\bf A}}$ and $\delta_{R_{\bf A}}$ will denote the unary maps on ${\bf G}(\bF_{\bf A})$ also defined as in (\ref{eqBetaR}) and (\ref{eqMQDiamond}) resp., but where $R_{\bf A}$ replaces both $R_\Box$ and $R_\Diamond$.  respectively. 

 As the following result shows, basic and forest frames are {\em equivalent} in the sense that they define the same Dunn GAO in the isomorphic representation theorem.
 \begin{theorem}\label{isoDUNN}
 Let $({\bf A}, \Box, \Diamond)$ be any Dunn GAO. Then 
 $$
 ({\bf A}, \Box, \Diamond)\cong ({\bf G}(\bF_{\bf A}), \beta, \delta)\cong ({\bf G}(\bF_{\bf A}), \beta_{R_{\bf A}}, \delta_{R_{\bf A}})
 $$ 
 via the same isomorphism $r$. In particular, for all $a\in A$, 
 $$
 r(\Box a)=\beta(r(a))=\beta_{R_{\bf A}}(r(a))
 $$
 and 
 $$
  r(\Diamond a)=\delta(r(a))=\delta_{R_{\bf A}}(r(a)). 
 $$
 \end{theorem}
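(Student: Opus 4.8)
The plan is to build on Theorem~\ref{thm:isoBox}, which already delivers the first isomorphism $({\bf A},\Box,\Diamond)\cong({\bf G}(\bF_{\bf A}),\beta,\delta)$ via $r$, together with the identities $r(\Box a)=\beta(r(a))$ and $r(\Diamond a)=\delta(r(a))$. Since $r$ is a G\"odel isomorphism (Lemma~\ref{lemma1}) and in particular maps $A$ onto $G(\bF_{\bf A})$, to obtain the second isomorphism via the \emph{same} $r$ it suffices to verify the two operator identities $r(\Box a)=\beta_{R_{\bf A}}(r(a))$ and $r(\Diamond a)=\delta_{R_{\bf A}}(r(a))$ for every $a\in A$; equivalently, that $\beta=\beta_{R_{\bf A}}$ and $\delta=\delta_{R_{\bf A}}$ on $G(\bF_{\bf A})$. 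Because $R_{\bf A}=R_\Box\cap R_\Diamond$ sits inside both $R_\Box$ and $R_\Diamond$, two of the four inclusions are immediate and use no extra axioms: restricting the universal quantifier to the smaller relation $R_{\bf A}$ only enlarges the box, so $r(\Box a)=\beta(r(a))\subseteq\beta_{R_{\bf A}}(r(a))$, while restricting the existential witness to $R_{\bf A}$ only shrinks the diamond, so $\delta_{R_{\bf A}}(r(a))\subseteq\delta(r(a))=r(\Diamond a)$. What remains, and where the Dunn axioms must enter, are the reverse inclusions $\beta_{R_{\bf A}}(r(a))\subseteq r(\Box a)$ and $r(\Diamond a)\subseteq\delta_{R_{\bf A}}(r(a))$, each of which I would prove pointwise over prime filters by a prime-filter separation argument.

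For the diamond inclusion I would fix $f$ with $\Diamond a\in f$ (i.e. $f\in r(\Diamond a)$) and produce a prime filter $g$ with $a\in g$, $\Box^{-1}(f)\subseteq g$ and $g\subseteq\Diamond^{-1}(f)$, as these three facts say exactly $f R_\Box g$, $f R_\Diamond g$ and $a\in g$, hence $f R_{\bf A} g$ with $a\in g$, witnessing $f\in\delta_{R_{\bf A}}(r(a))$. To this end, consider the filter $G$ generated by $\Box^{-1}(f)\cup\{a\}$, whose elements lie above $c\wedge a$ for some $c\in\Box^{-1}(f)$ (using that $\Box^{-1}(f)$ is a filter by Proposition~\ref{prop:basic}(1)). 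Axiom (D2) in the form $\Box c\wedge\Diamond a\leq\Diamond(c\wedge a)$, together with $\Box c\in f$ and $\Diamond a\in f$, yields $\Diamond(c\wedge a)\in f$, i.e. $c\wedge a\in\Diamond^{-1}(f)$; since $\Diamond^{-1}(f)$ is an upward-closed co-filter (Proposition~\ref{prop:basic}(2)) this gives $G\subseteq\Diamond^{-1}(f)$. As $A\setminus\Diamond^{-1}(f)$ is an ideal that is proper (it contains $\bot$ because $\Diamond\bot=\bot\notin f$ by $(\Diamond1)$) and disjoint from $G$, the separation lemma \cite[Lemma 2.3.15]{H98} provides a prime $g\supseteq G$ with $g\subseteq\Diamond^{-1}(f)$, and $g$ contains both $a$ and $\Box^{-1}(f)$ by construction.

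For the box inclusion I would argue contrapositively: fix $f$ with $\Box a\notin f$ and construct a prime filter $g$ with $\Box^{-1}(f)\subseteq g\subseteq\Diamond^{-1}(f)$ and $a\notin g$, so that $f R_{\bf A} g$ but $a\notin g$, i.e. $f\notin\beta_{R_{\bf A}}(r(a))$. Here I would take the ideal $I$ generated by $\{a\}\cup(A\setminus\Diamond^{-1}(f))$, so that $I=\{x\mid x\leq a\vee j\text{ for some }j\notin\Diamond^{-1}(f)\}$. The crucial step is to show $\Box^{-1}(f)\cap I=\emptyset$: if some $c\in\Box^{-1}(f)$ satisfied $c\leq a\vee j$ with $\Diamond j\notin f$, then monotonicity of $\Box$ and axiom (D1) give $\Box c\leq\Box(a\vee j)\leq\Box a\vee\Diamond j$, whence $\Box a\vee\Diamond j\in f$ (as $\Box c\in f$), and primeness of $f$ forces $\Box a\in f$ or $\Diamond j\in f$, contradicting both hypotheses. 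This disjointness makes $I$ proper, and the separation lemma yields a prime $g\supseteq\Box^{-1}(f)$ disjoint from $I$; disjointness from $I$ simultaneously forces $a\notin g$ (since $a\in I$) and $g\subseteq\Diamond^{-1}(f)$ (since $A\setminus\Diamond^{-1}(f)\subseteq I$).

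The main obstacle throughout is the \emph{simultaneous} control of the three constraints on the separating prime filter $g$, namely membership (or non-membership) of $a$ together with the two inclusions $\Box^{-1}(f)\subseteq g\subseteq\Diamond^{-1}(f)$. This is precisely where the two Dunn axioms do the work: (D2) guarantees that the filter generated from $\Box^{-1}(f)$ and $a$ never escapes the co-filter $\Diamond^{-1}(f)$, while (D1) guarantees that the ideal built from $a$ and the complement of $\Diamond^{-1}(f)$ never meets the filter $\Box^{-1}(f)$. Once these two disjointness facts are secured, the existence of $g$ is a routine invocation of prime-filter separation, and assembling the four inclusions completes the proof that $r$ is an isomorphism onto $({\bf G}(\bF_{\bf A}),\beta_{R_{\bf A}},\delta_{R_{\bf A}})$ as well.
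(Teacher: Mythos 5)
Your proof is correct, but it takes a genuinely different route from the paper's. The paper proves Theorem~\ref{isoDUNN} by reduction to known results: it passes to the $\{\to,\neg\}$-free reduct ${\bf A}^-$, observes that a Dunn GAO reduct is a positive modal algebra in the sense of Dunn and Celani--Jansana, invokes \cite[Lemma 2.1(1)]{CelJan} to identify $R_{\bf A}$ with $R_\Box\cap R_\Diamond$ and \cite[Theorem 2.2]{CelJan} for the isomorphism of positive modal algebras, and then re-expands the representation with the implication $x\to y=({\uparrow}(x\setminus y))^c$ to recover the full GAO. You instead give a self-contained argument: starting from Theorem~\ref{thm:isoBox}, you reduce everything to the two operator identities $\beta=\beta_{R_{\bf A}}$ and $\delta=\delta_{R_{\bf A}}$, dispose of the two trivial inclusions by monotonicity of the relation in the definitions of $\beta$ and $\delta$, and prove the two nontrivial ones by filter/ideal separation, with (D2) guaranteeing that the filter generated by $\Box^{-1}(f)\cup\{a\}$ stays inside the co-filter $\Diamond^{-1}(f)$, and (D1) guaranteeing that the ideal generated by $\{a\}\cup(\Diamond^{-1}(f))^c$ misses $\Box^{-1}(f)$. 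This is essentially the same separation technique the paper deploys later, in the left-to-right directions of Theorem~\ref{CJteo} (indeed your box-inclusion argument is nearly identical to the paper's proof that (D1) implies $R_\Box\subseteq R_{\bf A}\circ{\geq}$), so your proof in effect inlines the frame-correspondence content that the paper defers to that theorem. What your route buys is transparency and independence from \cite{CelJan}, making visible exactly where each Dunn axiom is used; what the paper's route buys is brevity and an explicit conceptual link between Dunn GAOs and positive modal algebras. One small bookkeeping point: the separation you need is filter--ideal separation (what the paper calls the Birkhoff prime filter theorem in the proof of Theorem~\ref{CJteo}), not the filter--element separation of \cite[Lemma 2.3.15]{H98} that you cite; in the finite distributive setting both are standard, so this is a citation slip rather than a gap.
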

 \begin{proof}
Let $({\bf A}, \Box, \Diamond)$ be a Dunn GAO, and let us denote by ${\bf A}^-$ its $\{\to, \neg\}$-free reduct. Then, since $({\bf A}, \Diamond, \Box)$ satisfies $(D1)$ and $(D2)$, $({\bf A}^-, \Diamond, \Box)$ is a {\em positive modal algebra} in the sense of \cite{Dunn,CelJan}. Since the set of prime filters of ${\bf A}$ and that of ${\bf A}^-$ coincide,  ${\bf F}_{{\bf A}^-}={\bf F}_{\bf A}$ and, following \cite{CelJan}, let us define $R_{\bf A}\subseteq F_{{\bf A}^-}\times F_{{\bf A}^-}$  as follows: for all $f_1, f_2\in F_{\bf A}$,
\begin{center}
$R_{\bf A}(f_1, f_2)$ iff $\Box^{-1}(f_1)\subseteq f_2\subseteq \Diamond^{-1}(f_1)$.
\end{center}
Since $f_2\subseteq \Diamond^{-1}(f_1)$ iff $\Diamond(f_2)\subseteq f_1$, by \cite[Lemma 2.1(1)]{CelJan}, we have that $R_{\bf A}=R_\Box\cap R_\Diamond$, where $R_\Box$ and $R_\Diamond$ are defined as usual.

Now, let ${\bf S}_{{\bf F}_{{\bf A}^-}}$ be the G\"odel algebra of subforests of ${\bf F}_{{\bf A}^-}$ and define $\delta_{R_{\bf A}}$ and $\beta_{R_{\bf A}}$ on $S_{{\bf F}_{{\bf A}^-}}$ by (\ref{eqMQDiamond}) and (\ref{eqBetaR}) respectively. Then, \cite[Theorem 2.2]{CelJan} (see also \cite[Theorem 8.1]{Dunn}), shows that $({\bf A}^-, \Diamond, \Box)$ and the positive algebra $(({\bf S}_{{\bf F}_{{\bf A}^-}})^-, \delta_{R_{\bf A}}, \beta_{R_{\bf A}})$ are isomorphic (as positive modal algebras).

Since ${\bf F}_{\bf A}={\bf F}_{{\bf A}^-}$, one has that ${\bf S}_{{\bf F}_{{\bf A}^-}}={\bf G}(\bF_{\bf A})$. Now, it is not difficult to extend the above result to Dunn GAOs   by expanding the positive modal algebra $(({\bf S}_{{\bf F}_{{\bf A}}})^-, \delta_{R_{\bf A}}, \beta_{R_{\bf A}})$ by the operator $\to$  defined as in Section \ref{sec:algandFortests}: for all $x, y\in S_{{\bf F}_{\bf A}}$, 
$$
x\to y= ({\uparrow}(x\setminus y))^c = F_{\bf A}\setminus {\uparrow}(x\setminus y).
$$ 
Then,  $({\bf S}_{{\bf F}_{{\bf A}}})^-$ plus $\to$ and $\neg$ (defined as usual by $\neg x=x\to \emptyset$) is a G\"odel algebra isomorphic to ${\bf G}(\bF_{\bf A})$.
 \end{proof}
 The case of Fischer Servi GAOs is similar and indeed the same basic frames are enough to construct the isomorphic copy of any algebra belonging to $\mathbb{FSGAO}$. The unique necessary modification consists in considering, in place of $\beta_{R_{\bf A}}$, the map $\beta_{(\geq\circ R_{\bf A})}:{\bf G}(\bF_{\bf A})\to {\bf G}(\bF_{\bf A})$ as in (\ref{eqBetaR}), but adopting the composed relation ${\geq}{\circ}R_{\bf A}$ instead of $R_\Box$.

 In order to prove that basic frames allows us to show a forest-based representation result for Fischer Servi GAOs, let us recall \cite[Theorem 4.1]{Orlo} that has been proved in the more general setting of Heyting algebras with operators that satisfy Fischer Servi equations. In the aforementioned paper, these latter algebras have been called {\em HK1-algebras}. 

\begin{theorem} [Theorem 4.1 \cite{Orlo}]
Every HK1-algebra ${\bf W}$ is embeddable into the complex algebra of its canonical frame $C(X(W))$ through the mapping $h: W \to C(X(W))$ defined as $h(w) = \{ F \in X(W) : w \in F\}$. 
\end{theorem}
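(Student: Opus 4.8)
The plan is to mirror the two-stage argument used for Theorem \ref{thm:isoBox}: first verify that $h$ embeds the underlying Heyting algebra, and then check separately that it intertwines the two modal operators with the complex operations of $C(X(W))$. Here $X(W)$ denotes the poset of prime filters of ${\bf W}$ (the Esakia-space analogue of the forest of Lemma \ref{lemma1}), equipped with the canonical accessibility relations, and the whole difficulty is concentrated in the modal clauses.

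For the lattice part I would invoke the Priestley/Esakia representation for Heyting algebras, the unbounded analogue of Lemma \ref{lemma1}, to conclude that $h(w)=\{F\in X(W):w\in F\}$ is an injective bounded-lattice homomorphism preserving $\to$; injectivity is immediate from prime-filter separation, since $w\neq w'$ yields a prime filter containing exactly one of them. It then remains to show $h(\Box a)=\beta(h(a))$ and $h(\Diamond a)=\delta(h(a))$, where $\beta,\delta$ are built, exactly as in \eqref{eqBetaR} and \eqref{eqMQDiamond}, from the canonical relations $R_\Box,R_\Diamond$ defined on $X(W)$ as in \eqref{eq:QBox} and \eqref{eq:Qdiamond}, namely $f R_\Box g$ iff $\Box^{-1}(f)\subseteq g$ and $f R_\Diamond g$ iff $\Diamond(g)\subseteq f$. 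By Proposition \ref{prop:basic}, each $\Box^{-1}(f)$ is a filter and, for $f$ prime, each $\Diamond^{-1}(f)$ is a co-filter, so these relations are well defined and suitably (anti)monotone.

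The modal clauses I would then treat exactly as in the proof of Theorem \ref{thm:isoBox}. For $\Box$, the inclusion $h(\Box a)\subseteq\beta(h(a))$ follows from monotonicity of $\Box$, while the reverse inclusion is proved contrapositively: if $\Box a\notin f$ then $a\notin\Box^{-1}(f)$, and a prime-filter extension of the filter $\Box^{-1}(f)$ avoiding $a$ produces a $g$ with $f R_\Box g$ and $a\notin g$, witnessing $f\notin\beta(h(a))$. Dually, for $\Diamond$, the inclusion $\delta(h(a))\subseteq h(\Diamond a)$ holds because $a\in g$ and $\Diamond(g)\subseteq f$ force $\Diamond a\in f$, and the converse requires, given $\Diamond a\in f$, a prime filter $g$ with $a\in g$ and $\Diamond(g)\subseteq f$.

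The step I expect to be the main obstacle is the construction of these witnessing prime filters. In the finite case of Theorem \ref{thm:isoBox} one could simply take the principal filter $f_a={\uparrow}a$ generated by a join-irreducible element, but here $g$ must \emph{simultaneously} meet a containment condition coming from $R_\Box$ and one coming from $R_\Diamond$; this forces a combined filter/ideal separation, fixing a target filter and a complementary ideal and invoking the prime-filter theorem (via Zorn's lemma in the infinite case). It is precisely at this point that the Fischer Servi axioms (FS1) and (FS2)---equivalently (D2) and its dual by Proposition \ref{prop:D2-FS2}---are needed to guarantee that the chosen filter and ideal are disjoint, so that a separating prime filter exists; these same axioms are what allow the two canonical relations to be merged into the single relation of a basic frame in the sense of Definition \ref{basic}. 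Once both witnesses are produced the two equalities close, $h$ is an embedding of HK1-algebras, and over the finite slice relevant to this paper it refines to the isomorphism of Theorem \ref{isoDUNN}.
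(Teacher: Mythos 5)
You should first be aware that the paper never proves this statement itself: it is quoted verbatim from Or{\l}owska--Rewitzky \cite[Theorem 4.1]{Orlo} and used as a black box to derive the finite representation theorem for Fischer Servi GAOs that follows it. So your attempt can only be judged against what a correct proof must contain, and as written it has a structural gap. The complex algebra of the canonical frame is built from the \emph{single} canonical relation $R$ (where $fRg$ iff $\Box^{-1}(f)\subseteq g\subseteq\Diamond^{-1}(f)$): as the paper makes explicit right after the statement, its operations are $\beta_{\geq\circ R}$ and $\delta_{R}$. What you actually set out to verify is $h(\Box a)=\beta_{R_\Box}(h(a))$ and $h(\Diamond a)=\delta_{R_\Diamond}(h(a))$ for the two \emph{independent} relations of \eqref{eq:QBox}--\eqref{eq:Qdiamond}. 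Those two identities hold for \emph{every} Heyting algebra with operators, with witnesses exactly as in Theorem \ref{thm:isoBox} (the prime filter theorem replaces finiteness, and for the $\Diamond$-clause monotonicity of $\Diamond$ already makes ${\uparrow}a$ disjoint from the ideal $(\Diamond^{-1}(f))^c$, so join-irreducibility is not needed). No Fischer Servi axiom is used there, and what comes out is the representation of arbitrary GAOs/HK-algebras --- not the stated theorem about the canonical (one-relation) frame.

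The place where (FS1)/(FS2) genuinely enter is the passage from the two relations to the single one, and this is exactly the step your proposal leaves as an aside (``these same axioms are what allow the two canonical relations to be merged''). Concretely, one must show $\beta_{R_\Box}=\beta_{\geq\circ R}$ and $\delta_{R_\Diamond}=\delta_{R}$, equivalently $R_\Box={\geq}\circ R$ and $R_\Diamond=R\circ{\leq}$, which is Theorem \ref{CJteo}(2)--(3) (i.e.\ \cite[Lemma 5.4]{Ale}); its proof is where the separation arguments you anticipate actually live. Given $\Diamond a\in f$, one needs a prime $g$ with $a\in g$ and $\Box^{-1}(f)\subseteq g\subseteq\Diamond^{-1}(f)$ \emph{simultaneously}, and the disjointness of the filter $Fi(\Box^{-1}(f)\cup\{a\})$ from the ideal $(\Diamond^{-1}(f))^c$ is exactly what (D2)$\,\equiv\,$(FS1) guarantees; given $\Box a\notin f$, one needs, besides the prime $g\supseteq\Box^{-1}(f)$ omitting $a$, a \emph{second} witness $f'\supseteq f$ with $f'Rg$, and that separation is what uses (FS2). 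Your text describes the simultaneous-containment difficulty, which belongs to this single-relation setting, while defining $\beta,\delta$ from the two separate relations, where it never arises; the two halves of the proposal therefore do not fit together, and neither separation is actually carried out. A smaller slip: (FS1) and (FS2) are not ``equivalently (D2) and its dual'' --- Proposition \ref{prop:D2-FS2} gives only (FS1)$\equiv$(D2), and the examples in Section \ref{extensions} show that (FS2) and (D1) are mutually independent.
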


Now, as we recalled above, HK1-algebras are the Heyting analogues of our Fisher-Servi GAOs, so every  $({\bf A}, \Box, \Diamond)\in \mathbb{FSGAO}$ is a HK1-algebra in particular. Moreover, the canonical frame of $({\bf A}, \Box, \Diamond)$ is exactly the basic frame $({\bf F_A}, R_{\bf A})$, where ${\bf F_A}$ is the forest of its prime filters and $R_{\bf A}$ is the binary relation on $F_{\bf A}$ defined as usual. Finally, still following \cite{Orlo}, it is immediate to see that the complex algebra of $({\bf F_A}, R_{\bf A})$ is exactly the Fischer Servi GAO $(\bf {\bf G}(\bF_{\bf A}), \beta_{\cgeq \circ R_{\bf A}}, \delta_{R_{\bf A}})$ and the mapping $h$ is the same as the mapping $r$. 

Therefore, according to the above theorem, every finite Fischer Servi GAO $({\bf A}, \Box, \Diamond)$ can be embedded (as HK1-algebra) into $({\bf G}(\bF_{\bf A}), \beta_{\cgeq \circ R_{\bf A}}, \delta_{R_{\bf A}})$ by means of $r$. Now, since $({\bf A}, \Box, \Diamond)$ is indeed a Fischer Servi  GAO, then $({\bf G}(\bF_{\bf A}), \beta_{\cgeq \circ R_A}, \delta_{R_A})$ must be a GAO as well, and since $\bf A$ is finite, according to Lemma \ref{lemma1}, $r$ is an isomorphism. Therefore we have the following representation theorem. 

 \begin{theorem}
 Let $({\bf A}, \Box, \Diamond)$ be any finite Fischer Servi GAO. Then 
 $$
 ({\bf A}, \Box, \Diamond)\cong  ({\bf G}(\bF_{\bf A}), \beta_{(\cgeq\circ R_A)}, \delta_{R_A})
 $$ 
 via the isomorphism $r$. 
\end{theorem}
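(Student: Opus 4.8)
The plan is to assemble into a short proof the argument already sketched in the paragraphs preceding the statement, whose backbone is the embedding theorem of Or{\l}owska and Rewitzky (the cited Theorem 4.1 of \cite{Orlo}) together with the finiteness of ${\bf A}$. First I would observe that, since $({\bf A}, \Box, \Diamond)$ satisfies (FS1) and (FS2) and since every G\"odel algebra is in particular a Heyting algebra, the structure $({\bf A}, \Box, \Diamond)$ is an HK1-algebra in the sense of \cite{Orlo}. This makes Theorem 4.1 directly applicable, and it yields that the map $r(a) = \{f \in F_{\bf A} \mid a \in f\}$ coincides with the embedding $h$ and embeds $({\bf A}, \Box, \Diamond)$ into the complex algebra of its canonical frame.

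The second step is the identification of the abstract objects supplied by Theorem 4.1 with the concrete forest-based ones. Here I would make precise that the canonical frame of $({\bf A}, \Box, \Diamond)$ is the basic frame $({\bf F}_{\bf A}, R_{\bf A})$, with $R_{\bf A} = R_\Box \cap R_\Diamond$, and that its complex algebra is exactly $({\bf G}(\bF_{\bf A}), \beta_{(\cgeq \circ R_{\bf A})}, \delta_{R_{\bf A}})$. Concretely, one checks that the modal operations produced by the Or{\l}owska--Rewitzky construction, once restricted from arbitrary upsets of the poset to the subforests of $\bF_{\bf A}$, agree term by term with $\beta_{(\cgeq \circ R_{\bf A})}$ and $\delta_{R_{\bf A}}$ as defined through (\ref{eqBetaR}) and (\ref{eqMQDiamond}). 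It is precisely prelinearity that enters here, guaranteeing that $\bF_{\bf A}$ is a forest and that the range of $r$ is ${\bf G}(\bF_{\bf A})$ rather than the full algebra of upsets.

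Finally, I would upgrade the embedding to an isomorphism using finiteness. By Lemma \ref{lemma1}, the map $r$ is already a surjective G\"odel-algebra isomorphism from ${\bf A}$ onto ${\bf G}(\bF_{\bf A})$; since the embedding furnished by Theorem 4.1 has $r$ as its underlying map on the lattice reduct and moreover preserves $\Box$ and $\Diamond$, it follows that $r$ is a bijection preserving all operations, hence an isomorphism of GAOs. In particular $({\bf G}(\bF_{\bf A}), \beta_{(\cgeq \circ R_{\bf A})}, \delta_{R_{\bf A}})$ is a genuine GAO, being the isomorphic image of one, so the two modal identities $r(\Box a) = \beta_{(\cgeq \circ R_{\bf A})}(r(a))$ and $r(\Diamond a) = \delta_{R_{\bf A}}(r(a))$ hold for every $a \in A$.

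I expect the main obstacle to be the bookkeeping in the second step: faithfully transporting the Heyting-algebra machinery of \cite{Orlo} to the forest setting and, in particular, checking that the box operator there is built from the composite $\cgeq \circ R_{\bf A}$ rather than from $R_{\bf A}$ alone, whereas the diamond operator uses $R_{\bf A}$ directly. Tracking this asymmetry between the two modalities through the translation from the canonical and complex structures of \cite{Orlo} into the present forest notation is the delicate part on which the whole argument rests; once it is in place, the finiteness of ${\bf A}$ together with Lemma \ref{lemma1} makes the passage from embedding to isomorphism immediate.
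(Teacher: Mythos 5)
Your proposal is correct and follows essentially the same route as the paper: both view a finite Fischer Servi GAO as an HK1-algebra so that Or{\l}owska--Rewitzky's Theorem 4.1 applies, identify the canonical frame and its complex algebra with the basic frame $({\bf F}_{\bf A}, R_{\bf A})$ and with $({\bf G}(\bF_{\bf A}), \beta_{(\cgeq\circ R_{\bf A})}, \delta_{R_{\bf A}})$ respectively (with $h=r$), and then invoke finiteness and Lemma \ref{lemma1} to upgrade the embedding to an isomorphism. The only differences are cosmetic: you flag the operator identification (the asymmetry $\cgeq\circ R_{\bf A}$ versus $R_{\bf A}$) as the delicate bookkeeping step, which the paper simply asserts as immediate, and your aside attributing the surjectivity of $r$ to prelinearity is slightly off (it comes from finiteness via Lemma \ref{lemma1}; prelinearity only ensures the dual poset is a forest), but neither point changes the substance of the argument.
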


The next result, 
the statement of which can be found in two papers by Celani (see \cite[Teorema 2.6]{Cel01} and \cite[Theorem 7]{Cel06}), is meant to show what is the effect on the relations of the associated frames, of Dunn and Fischer Servi equations once added to G\"odel algebras with operators. For the sake of completeness, we present a proof of it.

\begin{theorem} \label{CJteo} Let $({\bf A}, \Box, \Diamond)$ be a GAO, let $({\bf F_A}, R_\Box, R_\Diamond)$ be the dual frame, and let $R_A = R_\Box \cap R_\Diamond$. Then the following conditions hold:
 \begin{enumerate}
 \item $({\bf A}, \Box, \Diamond)$ 
 satisfies (D1) iff $R_\Box = R_A {\circ \geq}$.
 \item $({\bf A}, \Box, \Diamond)$ satisfies (D2) iff $R_\Diamond = R_A {\circ \leq}$.
 \item $({\bf A}, \Box, \Diamond)$ satisfies (FS2) iff $R_\Box = {\geq \circ} R_A $.
 \end{enumerate}
 \end{theorem}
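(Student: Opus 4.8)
The plan is to prove each of the three biconditionals by splitting the relational identity into its two inclusions. In every case one inclusion holds in an arbitrary GAO and needs no axiom, while the reverse inclusion is where the corresponding axiom enters; the converse implication then recovers the axiom from the identity. Throughout I write $\Box^{-1}(f)=\{a:\Box a\in f\}$ and $\Diamond^{-1}(f)=\{a:\Diamond a\in f\}$, and recall that the order on $\bF_{\bf A}$ is reverse inclusion, so $g\geq h$ means $g\subseteq h$ as prime filters. The easy inclusions are uniform: since $R_A=R_\Box\cap R_\Diamond\subseteq R_\Box$, if $f_1R_Ag$ and $g\geq f_2$ then $\Box^{-1}(f_1)\subseteq g\subseteq f_2$, so $f_1R_\Box f_2$; hence $R_A\circ{\geq}\subseteq R_\Box$, and symmetrically ${\geq}\circ R_A\subseteq R_\Box$ (using $f_1\subseteq z\Rightarrow\Box^{-1}(f_1)\subseteq\Box^{-1}(z)$) and $R_A\circ{\leq}\subseteq R_\Diamond$. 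So in each item it remains to prove the other inclusion from the axiom, and the axiom from the identity.

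For the direction ``axiom $\Rightarrow$ inclusion'' I would use prime filter separation: a filter disjoint from an ideal extends to a prime filter containing it and still disjoint from the ideal (\cite[Lemma 2.3.15]{H98}, trivial here as everything is finite); I also use Proposition \ref{prop:basic} and the fact that the complement of a proper co-filter is an ideal. For (1), given $f_1R_\Box f_2$ I seek a prime $g$ with $\Box^{-1}(f_1)\subseteq g\subseteq f_2\cap\Diamond^{-1}(f_1)$, so I separate the filter $\Box^{-1}(f_1)$ from the ideal generated by $(A\setminus f_2)\cup(A\setminus\Diamond^{-1}(f_1))$: a common element $c\in\Box^{-1}(f_1)$ with $c\leq j\vee i$, $j\notin f_2$, $\Diamond i\notin f_1$, gives $\Box c\leq\Box j\vee\Diamond i$ by (D1), so $\Box j\vee\Diamond i\in f_1$, and primeness forces $\Diamond i\in f_1$ (impossible) or $\Box j\in f_1$, i.e.\ $j\in\Box^{-1}(f_1)\subseteq f_2$ (also impossible). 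For (2), given $f_1R_\Diamond f_2$ (so $f_2\subseteq\Diamond^{-1}(f_1)$) I separate the filter generated by $f_2\cup\Box^{-1}(f_1)$ from $A\setminus\Diamond^{-1}(f_1)$: a common element $c\geq p\wedge q$ ($p\in f_2$, $q\in\Box^{-1}(f_1)$) with $\Diamond c\notin f_1$ contradicts (D2), since $\Box q\wedge\Diamond p\leq\Diamond(p\wedge q)\leq\Diamond c$ while $\Box q,\Diamond p\in f_1$. For (3), given $f_1R_\Box f_2$ I separate the filter generated by $f_1\cup\{\Diamond b:b\in f_2\}$ from the ideal generated by $\{\Box a:a\notin f_2\}$; reducing a common element (using $\bigwedge_i\Diamond b_i\geq\Diamond(\bigwedge_i b_i)$) to the form $u\wedge\Diamond b\leq\Box a_0$ with $u\in f_1$, $b\in f_2$, $a_0\notin f_2$, residuation and (FS2) give $u\leq(\Diamond b\to\Box a_0)\leq\Box(b\to a_0)$, so $b\to a_0\in\Box^{-1}(f_1)\subseteq f_2$ and $a_0\geq b\wedge(b\to a_0)\in f_2$, contradicting $a_0\notin f_2$. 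In each case the separating prime filter is the required $g$ (resp.\ $z$).

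For the direction ``identity $\Rightarrow$ axiom'' I would pass to the isomorphic frame $({\bf G}(\bF_{\bf A}),\beta,\delta)$ of Theorem \ref{thm:isoBox}, reducing each axiom to an inequality of subforests. For (1) and (2) this is immediate from downward closure: assuming $R_\Box=R_A\circ{\geq}$, if $y\in\beta(a\cup b)\setminus\delta(b)$ and $yR_\Box w$, pick $g$ with $yR_Ag\geq w$; then $g\in a\cup b$, the case $g\in b$ is excluded since $yR_\Diamond g$ would place $y$ in $\delta(b)$, so $g\in a$ and $w\leq g$ gives $w\in a$, i.e.\ $\beta(a\cup b)\subseteq\beta(a)\cup\delta(b)$; the argument for (2) from $R_\Diamond=R_A\circ{\leq}$ is symmetric. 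Because (FS2) contains an implication, this order-theoretic argument breaks for (3), so there I would argue by contradiction on prime filters: a failure of (FS2) yields a prime $f_1$ with $\Diamond a\to\Box b\in f_1$ and $a\to b\notin\Box^{-1}(f_1)$; extending $\Box^{-1}(f_1)\cup\{a\}$ to a prime $f_2$ omitting $a\to b$ (possible precisely because $a\to b\notin\Box^{-1}(f_1)$) gives $f_1R_\Box f_2$ with $a\in f_2$, $b\notin f_2$; then $R_\Box={\geq}\circ R_A$ supplies $z\supseteq f_1$ with $zR_Af_2$, and $a\in f_2\subseteq\Diamond^{-1}(z)$ forces $\Diamond a\in z$, whence $\Box b\in z$ and $b\in\Box^{-1}(z)\subseteq f_2$, a contradiction.

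The hard part will be the separation step: each disjointness computation must combine the algebraic axiom with the \emph{defining} inclusion of the starting relation ($\Box^{-1}(f_1)\subseteq f_2$ for $R_\Box$, $f_2\subseteq\Diamond^{-1}(f_1)$ for $R_\Diamond$), and choosing the correct filter/ideal pair so that both prime cases collapse is the delicate point; in (3) it additionally rests on the residuation identity $x\wedge(x\to y)\leq y$. The analogous subtlety in the converse of (3) is the enrichment of $\Box^{-1}(f_1)$ by $a$, which is exactly what makes $\Diamond a\in z$ available and is easy to overlook.
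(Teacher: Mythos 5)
Your proposal is correct, and all six implications go through; but your route differs from the paper's in two substantive ways. Where you and the paper coincide: your proof of (1), left-to-right, is essentially the paper's own argument (the ideal generated by $(A\setminus f_2)\cup(A\setminus\Diamond^{-1}(f_1))$, the computation $\Box c\leq \Box j\vee\Diamond i$ via (D1), primeness of $f_1$ killing both disjuncts, then Birkhoff separation), and your converse for (3) is the paper's argument almost verbatim (enrich $\Box^{-1}(f_1)$ by $a$, extract $f_2$, apply $R_\Box={\geq}\circ R_A$ to get $z\supseteq f_1$ with $\Diamond a\in z$, hence $\Box b\in z$ and $b\in f_2$, contradiction). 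Where you genuinely diverge: first, for the left-to-right directions of (2) and (3) the paper does not give proofs at all --- it cites \cite[Lemma 5.4]{Ale} --- whereas you give self-contained separation arguments (for (2), the filter generated by $f_2\cup\Box^{-1}(f_1)$ against the ideal $A\setminus\Diamond^{-1}(f_1)$ using (D2); for (3), the filter generated by $f_1\cup\Diamond(f_2)$ against the ideal generated by $\{\Box a: a\notin f_2\}$ using residuation plus (FS2) and the identity $b\wedge(b\to a_0)\leq a_0$); this makes the theorem independent of Palmigiano's paper, at the cost of some length. Second, for the converses of (1) and (2) the paper argues by contradiction directly on prime filters of ${\bf A}$, whereas you transfer through the isomorphism of Theorem \ref{thm:isoBox} and verify $\beta(a\cup b)\subseteq\beta(a)\cup\delta(b)$ and $\beta(a)\cap\delta(b)\subseteq\delta(a\cap b)$ order-theoretically in ${\bf G}(\bF_{\bf A})$, exploiting only downward-closure of subforests; this is shorter and cleaner than the paper's treatment, and your remark that this device fails for (FS2) (because $\to$ is not computed pointwise on downsets) correctly explains why you fall back to the filter argument there. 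Two small points you leave implicit, both harmless: in (3) left-to-right, reducing the ideal-side element to a single $\Box a_0$ also uses $\Box a_1\vee\cdots\vee\Box a_m\leq\Box(a_1\vee\cdots\vee a_m)$ together with primeness of $f_2$ (you only flag the meet-side reduction $\bigwedge_i\Diamond b_i\geq\Diamond(\bigwedge_i b_i)$); and your claim that $a\to b\notin\Box^{-1}(f_1)$ suffices to extend $\Box^{-1}(f_1)\cup\{a\}$ to a prime filter omitting $a\to b$ rests on the Heyting identity $c\wedge a\leq a\to b\iff c\leq a\to b$, which deserves a line --- though the paper's own proof leaves the analogous step equally implicit.
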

 \begin{proof} The claims concerning left-to-right implications in (2) and (3) have been proved in \cite[Lemma 5.4]{Ale}. In particular, part of claim (2) is proved by \cite[Lemma 5.4 (2)]{Ale} together with  Proposition \ref{prop:D2-FS2} above concerning the equivalence between (D2) and (FS1). 
 
 Concerning (1), the claim has been stated in  \cite{CelJan} without proof. We are hence going to show it here. 
  \vspace{.1cm}
 
 (Left-to-right). Let us start observing that the inclusion $R_\Box \supseteq R_A {\circ \geq}$ is straightforward. Let us hence prove that $R_\Box \subseteq R_A {\circ \geq}$.
 
 Let $f_1 R_\Box f_2$ and let us prove that there exists $f_3\in F_{\bf A}$ such that 
 \begin{center}
 $\Box^{-1}(f_1)\subseteq f_3\subseteq \Diamond ^{-1}(f_1)$ and $f_2\leq f_3$. 
 \end{center}
 Let $i=Id((f_2)^c\cup (\Diamond^{-1}(f_1))^c)$ be the  ideal generated by  $(f_2)^c\cup (\Diamond^{-1}(f_1))^c$ and let us prove that $\Box^{-1}(f_1)\cap i=\emptyset$. By way of contradiction, assume that $a\in \Box^{-1}(f_1)\cap i$. Thus, in particular, $a\in \Box^{-1}(f_1)$, that is to say, $\Box a\in f_1$. Moreover, $a\in i$, whence there exists $c\in (f_2)^c$ and $b\in (\Diamond^{-1}(f_1))^c$ such that $a\leq c\vee b$. Therefore, by the monotonicity of $\Box$ and (D1), $\Box a\leq \Box(c\vee b)\leq \Box c\vee \Diamond b$. Now, since $\Box a\in f_1$ and $f_1$ is a filter, $\Box c\vee \Diamond b\in f_1$ as well. Notice that $\Box c\not\in f_1$. Indeed, since $c\not\in f_2$ and $f_2\supseteq \Box^{-1}(f_1)$, $\Box c\not\in f_1$. However, $f_1$ is prime and then $\Diamond b\in f_1$ that is absurd. Thus, $\Box^{-1}(f_1)\cap i=\emptyset$.
 
By Birkhoff prime filter theorem, there exists a prime filter $g$ such that $\Box^{-1}(f_1)\subseteq g$ and $g\cap i=\emptyset$. Therefore, $g\subseteq f_2$, meaning that in ${\bF}_{\bf A}$, $f_2\leq g$,  and $g\subseteq \Diamond^{-1}(f_1)$. Thus the claim is completed by taking $g=f_3$. 
 \vspace{.1cm}
 
 (Right-to-left). Let us assume, by way of contradiction, that $R_\Box=R_{\bf A}{\circ \geq}$ and let $a, b\in A$ such that $\Box(a\vee b)\not\leq \Box a\vee \Diamond b$. Then, there exists a prime filter $f$ such that $\Box(a\vee b)\in f$ and $\Box a\vee \Diamond b\not\in f$. Then, $\Box(a\vee b)\in f$ implies that $a\vee b\in \Box^{-1}(f)$, while $\Box a\vee \Diamond b\not\in f$ entails that, in particular, 
 \begin{equation}\label{eqContradiction}
 a\not\in \Box^{-1}(f)\mbox{ and }b\not\in \Diamond^{-1}(f).
 \end{equation}
  Since $\Box^{-1}(f)$ is a filter and $a\not\in \Box^{-1}(f)$, there is a prime filter $g$ such that $g\supseteq \Box^{-1}(f)$ and $a\not\in g$. In particular, $g\supseteq \Box^{-1}(f)$ implies that $f R_\Box g $. By  hypothesis $R_\Box=R_{\bf A}{\circ \geq}$, and since $f R_\Box g $, one has that there exists a prime filter $h$ such that $\Box^{-1}(f)\subseteq h\subseteq \Diamond^{-1}(f)$ and $h\subseteq g$. Since $a\not\in g$, and $h\subseteq g$, $a\not\in h$. However, $a\vee b\in \Box^{-1}(f)\subseteq h$, whence $b\in h\subseteq \Diamond^{-1}(f)$  contradicting  (\ref{eqContradiction}).
  
Now, in order to prove the right-to-left  claim (2), let us assume that $R_\Diamond=R_A\circ {\leq}$ and let $a,b\in A$. We want to prove that $\Diamond (a \rightarrow b)\leq \Box a\rightarrow \Diamond b$
%
or equivalently  $\Diamond (a \rightarrow b) \land \Box a \leq \Diamond b$.  
So, let us consider the filter $f={\uparrow}(\Diamond (a \rightarrow b) \land \Box a)$, and  we will prove $\Diamond b\in f$. Suppose $\Diamond b \not\in f$. Then, there exists a prime filter $f_1$ such that $f \subseteq f_1$ and $\Diamond b\notin f_1$. Then, $a\rightarrow b\in \Diamond^{-1}(f_1)$ and $a\in \Box^{-1}(f_1)$. By Lemmas \cite[3.5 and (a) of 3.2]{Ale}, there exists a prime filter $f_2$ such that $a \to b\in f_2\subseteq \Diamond^{-1}(f_1)$. 
So, $f_1 R_\Diamond f_2$ and, by assumption, there exists a prime filter $f_3$ such that $\Box^{-1}(f_1)\subseteq f_3\subseteq \Diamond^{-1}(f_1)$ and $f_2\subseteq f_3$. Since $a\rightarrow b\in f_2\subseteq f_3$ and $a\in \Box^{-1}(f_1)\subseteq f_3$, we get that $a,a\rightarrow b\in f_3$ and therefore $b\in f_3$. From $b\in f_3\subseteq \Diamond^{-1}(f_1)$ we obtain $\Diamond b\in f_1$ which is a contradiction. 
Therefore, $\Diamond b \in f$.

To prove  the right-to-left claim (3), let us assume that $R_\Box={\geq} \circ R_A$ and let $a,b\in A$. We will prove that $\Diamond a \rightarrow \Box b\leq \Box (a\rightarrow b)$. So, let us consider a prime filter $f_1$ such that $\Diamond a \rightarrow \Box b\in f_1$ and suppose that $\Box(a\rightarrow b)\notin f_1$. Since $\Box^{-1}(f_1)$ is a filter, $b\notin Fi(\Box^{-1}(f_1)\cup\{a\})$, where $Fi(\Box^{-1}(f_1)\cup\{a\})$ is the filter generated by $\Box^{-1}(f_1)\cup\{a\}$.  Then, there exists a prime filter $f_2$ such that $\Box^{-1}(f_1)\subseteq f_2$, $a\in f_2$ and $b\notin f_2$. By assumption, since $f_1 R_\Box f_2$, there exists a prime filter $f_3$ such that $f_1\subseteq f_3$ and $\Box^{-1}(f_3)\subseteq f_2\subseteq \Diamond^{-1}(f_3)$. Thus, from $\Diamond a \rightarrow \Box b\in f_1\subseteq f_3$ and $a\in f_2\subset \Diamond^{-1}(f_3)$ we get $\Diamond a \rightarrow \Box b,\Diamond a\in f_3$. We can imply that $\Box b\in f_3$ and hence $b\in \Box^{-1}(f_3)\subseteq f_2$ which is a contradiction because $b\notin f_2$. Therefore $b\in Fi(\Box^{-1}(f_1)\cup\{a\})$, and we get $a\rightarrow b\in \Box^{-1}(f_1)$.
 \end{proof}

 
As we already recalled in Section \ref{sec:algandFortests}, the algebraic category finite of G\"odel algebras are dual to the category of finite forests. The following result, whose proof can be found in \cite[\S4.2]{ABG}, hence complements Lemma \ref{lemma1} and it recalls that every finite forest $({\bf X},\leq)$ is isomorphic to the finite forest of prime filters of the G\"odel algebra ${\bf G}({\bf X})$.
 
 \begin{lemma} [C.f. \cite{ABG}] \label{bijective} Let $(X, \leq)$ be a finite forest and let $F_{G(X)}$ be the set of prime filters of ${\bf G(X)}$.  Define the mapping $k: X \to F_{G(X)}$ as follows: for any $x \in X$, by $k(x) = \{f \in G(X) \mid x \in f\}$. Then $k$ is a bijective mapping such that $x \leq y$ iff $k(x) \supseteq k(y)$. Therefore, the forests ${\bf X} = (X, \leq)$ and ${\bf F_{G(X)}} = (F_{G(X)}, \supseteq)$ are isomorphic through the mapping $k$. 
\end{lemma}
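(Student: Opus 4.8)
The plan is to reduce everything to a single key observation: for each $x\in X$, the set $k(x)$ is precisely the principal filter of $\mathbf{G}(X)$ generated by the principal downset ${\downarrow}x=\{z\in X\mid z\leq x\}$. First I would note that, since every $f\in G(X)$ is downward closed, one has $x\in f$ if and only if ${\downarrow}x\subseteq f$; hence $k(x)=\{f\in G(X)\mid f\supseteq {\downarrow}x\}={\uparrow}({\downarrow}x)$, the principal filter (upset) of ${\downarrow}x$ in the lattice $\mathbf{G}(X)$, whose order is set inclusion.

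Next I would identify the join-irreducible elements of $\mathbf{G}(X)$. In the lattice of downsets of any finite poset these are exactly the principal downsets ${\downarrow}x$: on one hand, any downset $f$ equals the union $\bigcup_{z\in f}{\downarrow}z$, so a join-irreducible downset must coincide with one such ${\downarrow}x$; on the other hand, if ${\downarrow}x=a\cup b$ with $a,b$ downsets, then $x\in a$ or $x\in b$, forcing $a={\downarrow}x$ or $b={\downarrow}x$. Since in a finite G\"odel algebra the prime filters are precisely the principal filters generated by join-irreducibles (as recalled in Section~\ref{sec:algandFortests}), it follows that $k(x)={\uparrow}({\downarrow}x)$ is a prime filter, so $k$ is well defined into $F_{G(X)}$.

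The order equivalence and injectivity then follow immediately from the standard computation ${\uparrow}a\supseteq{\uparrow}b\iff a\leq b$, applied to $a={\downarrow}x$ and $b={\downarrow}y$: indeed $k(x)\supseteq k(y)$ iff ${\downarrow}x\subseteq{\downarrow}y$ iff $x\leq y$. Antisymmetry of $\leq$ then gives $k(x)=k(y)\Rightarrow x=y$, so $k$ is injective, and the biconditional $x\leq y \iff k(x)\supseteq k(y)$ is exactly the order-isomorphism condition demanded (recall that the forest order on $F_{G(X)}$ is reverse inclusion). For surjectivity I would invoke the prime-filter characterization once more, in the reverse direction: any prime filter of $\mathbf{G}(X)$ equals ${\uparrow}j$ for some join-irreducible $j$, and by the second paragraph $j={\downarrow}x$ for some $x\in X$, whence that prime filter is ${\uparrow}({\downarrow}x)=k(x)$. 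Thus $k$ is onto.

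The only point requiring genuine care is the identification of the join-irreducibles of $\mathbf{G}(X)$ as exactly the principal downsets, combined with careful bookkeeping of the two order conventions (inclusion inside $\mathbf{G}(X)$ versus reverse inclusion on $F_{G(X)}$). Once $k(x)={\uparrow}({\downarrow}x)$ is established, every remaining step is a direct consequence of elementary facts about principal up- and down-sets, so the main obstacle is organizational rather than substantive.
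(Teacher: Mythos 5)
Your proof is correct. Note that the paper itself does not prove Lemma~\ref{bijective} at all: it only points to \cite[\S4.2]{ABG}, so there is no in-paper argument to match against, and your contribution is precisely to supply the missing details. What you write is the standard finite Birkhoff-duality argument, and it is sound at every step: the observation that $x\in f$ iff ${\downarrow}x\subseteq f$ for a downset $f$, hence $k(x)={\uparrow}({\downarrow}x)$; the identification of the join-irreducibles of the downset lattice $\mathbf{G}(X)$ with the principal downsets (your two-sided argument is fine, with the usual convention that the bottom $\emptyset$ is not join-irreducible, which is harmless since ${\downarrow}x\neq\emptyset$); and the appeal to the fact that prime filters of a finite G\"odel algebra are exactly the principal filters of join-irreducibles, which is literally the fact the paper recalls in Section~\ref{sec:algandFortests}, so you are not importing anything the paper does not already license. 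The order bookkeeping is also right: ${\uparrow}({\downarrow}x)\supseteq{\uparrow}({\downarrow}y)$ iff ${\downarrow}x\subseteq{\downarrow}y$ iff $x\leq y$, which gives injectivity and the stated equivalence $x\leq y$ iff $k(x)\supseteq k(y)$ in one stroke, and surjectivity falls out of reading the join-irreducible characterization in the reverse direction. If anything, this argument is worth keeping as an explicit proof, since it makes the paper self-contained on a point it currently delegates entirely to the literature.
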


The mapping $k$ appearing in the above lemma preserves the structure of forest frames in the sense of next result. 

\begin{lemma} \label{key} Let $F = ({\bf X}, R^{\Box}, R^{\Diamond})$ be a forest frame. Then: 
\begin{itemize}
\item[(1)]
if $xR^{\Box}y$ then $k(x) R^{\Box}_{G(X)} k(y)$. 

\item[(2)] if $R^{\Box} \circ {\geq} = R^{\Box}$, then if $k(x) R^{\Box}_{G(X)} k(y)$ then $xR^{\Box}y$. 

\item[(3)] if $xR^{\Diamond}y$ then $k(x) R^{\Diamond}_{G(X)} k(y)$. 

\item[(4)] if $R^{\Diamond} \circ {\leq} = R^{\Diamond}$, then if $k(x) R^{\Diamond}_{G(X)} k(y)$ then $xR^{\Diamond}y$. 
\end{itemize}
\end{lemma}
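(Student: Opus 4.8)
The plan is to observe that all four items follow from two equivalences that identify, through $k$, the dual relations $R^\Box_{G(X)}$ and $R^\Diamond_{G(X)}$ on the prime filters of $\mathbf{G}(X)$ with the composed relations $R^\Box\circ{\geq}$ and $R^\Diamond\circ{\leq}$ on $X$. First I would unwind the definitions. By \eqref{eq:QBox}, applied to the GAO $(\mathbf{G}(X),\beta,\delta)$ (which is a GAO by Proposition \ref{prop:properties1}), one has $k(x) R^\Box_{G(X)} k(y)$ iff $\beta^{-1}(k(x))\subseteq k(y)$; and since $k(x)=\{a\in G(X)\mid x\in a\}$, the set $\beta^{-1}(k(x))$ equals $\{a\in G(X)\mid x\in\beta(a)\}=\{a\in G(X)\mid R^\Box(x)\subseteq a\}$ by \eqref{eqBetaR}, where $R^\Box(x)=\{z\in X\mid xR^\Box z\}$. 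Dually, by \eqref{eq:Qdiamond}, $k(x) R^\Diamond_{G(X)} k(y)$ iff $k(y)\subseteq\delta^{-1}(k(x))$, and $\delta^{-1}(k(x))=\{a\in G(X)\mid R^\Diamond(x)\cap a\neq\emptyset\}$ by \eqref{eqMQDiamond}.

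The heart of the argument, and the step I expect to require the most care, is a minimal-downset computation. For the box case, $\beta^{-1}(k(x))\subseteq k(y)$ says that every downset $a$ containing $R^\Box(x)$ also contains $y$; testing this against the smallest such downset, namely ${\downarrow}R^\Box(x)$ (which is itself an element of $G(X)$), shows the condition is equivalent to $y\in{\downarrow}R^\Box(x)$, i.e. to the existence of $z$ with $xR^\Box z$ and $y\leq z$. Hence
\[
k(x) R^\Box_{G(X)} k(y)\iff x\,(R^\Box\circ{\geq})\,y.
\]
For the diamond case, $k(y)\subseteq\delta^{-1}(k(x))$ says that every downset containing $y$ meets $R^\Diamond(x)$; testing against the principal downset ${\downarrow}y$, the least downset containing $y$, yields the equivalent condition that some $z\leq y$ lies in $R^\Diamond(x)$, so that
\[
k(x) R^\Diamond_{G(X)} k(y)\iff x\,(R^\Diamond\circ{\leq})\,y.
\]
Both directions of each equivalence use only that the elements of $G(X)$ are downsets and that downward closures are the least downsets above a given set.

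With these two equivalences in hand the four items are immediate. Items (1) and (3) follow from the trivial inclusions $R^\Box\subseteq R^\Box\circ{\geq}$ and $R^\Diamond\subseteq R^\Diamond\circ{\leq}$ (take $z=y$): if $xR^\Box y$ then $x\,(R^\Box\circ{\geq})\,y$, so $k(x) R^\Box_{G(X)} k(y)$, and symmetrically for $\Diamond$. For items (2) and (4), the extra hypotheses $R^\Box\circ{\geq}=R^\Box$ and $R^\Diamond\circ{\leq}=R^\Diamond$ turn the right-hand sides of the two equivalences back into $xR^\Box y$ and $xR^\Diamond y$ respectively, yielding the converse implications. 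The only genuine mathematical content is the minimal-downset step; the rest is bookkeeping of the composition conventions fixed in \eqref{eqRprime} and the Notation block, so the principal risk is merely keeping the directions of $\leq$ and of the two compositions straight.
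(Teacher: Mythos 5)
Your proof is correct, and at its computational core it coincides with the paper's: both unwind the dual relations on ${\bf F}_{G(X)}$ through $k$ into quantified statements over downsets of $X$, and both conclude by instantiating the universal quantifier at a well-chosen downset. The difference lies in the decomposition. The paper argues each item separately: for (2) it takes the witness $f=R^{\Box}(x)$, which belongs to $G(X)$ \emph{only because} of the hypothesis $R^{\Box}\circ{\geq}=R^{\Box}$, and for (4) it takes $f={\downarrow}y$ and invokes $R^{\Diamond}\circ{\leq}=R^{\Diamond}$ at the last step. You instead test against ${\downarrow}R^{\Box}(x)$ and ${\downarrow}y$, which are downsets with no hypotheses whatsoever, and so obtain the unconditional equivalences
$$
k(x)\,R^{\Box}_{G(X)}\,k(y)\ \mbox{ iff }\ x\,(R^{\Box}\circ{\geq})\,y
\qquad\mbox{and}\qquad
k(x)\,R^{\Diamond}_{G(X)}\,k(y)\ \mbox{ iff }\ x\,(R^{\Diamond}\circ{\leq})\,y,
$$
from which all four items follow by pure bookkeeping. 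What your factorization buys is a sharper statement valid for \emph{every} forest frame: through $k$, the dual frame of $({\bf G}(X),\beta,\delta)$ always recovers the relations $R'_\Box=R^{\Box}\circ{\geq}$ and $R'_\Diamond=R^{\Diamond}\circ{\leq}$ of (\ref{eqRprime}), rather than $R^{\Box}$ and $R^{\Diamond}$ themselves; this dovetails with Proposition \ref{prop:Rprime} and Lemma \ref{diamond}, and it makes transparent why the hypotheses in items (2) and (4) are exactly the right ones (and exactly what Corollary \ref{frameiso} needs). The paper's direct verification is marginally shorter for the lemma as stated, but proves nothing beyond it.
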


\begin{proof} First of all, note that, by definition: \\

$k(x) R^{\Box}_{G(X)} k(y)$ iff $\beta^{-1}(k(x)) \subseteq k(y)$

\hspace{2.3cm} iff $\forall f \in G(X)$, $x \in \beta(f)$ implies $y \in f$

\hspace{2.3cm} iff $\forall f \in G(X)$, $(\forall z) (x R^{\Box} z$ implies $z \in f$) implies $y \in f$ \\

$k(x) R^{\Diamond}_{G(X)} k(y)$ iff $ k(y) \subseteq \delta^{-1}(k(x)) $

\hspace{2.3cm} iff $\forall f \in G(X)$, $y \in f$  implies $x \in \delta(f)$

\hspace{2.3cm} iff $\forall f \in G(X)$, $y \in f$ implies $(\exists z) (x R^{\Diamond} z$ and $z \in f)$  \\

\noindent (1) Assume $xR^{\Box}y$. Let $f \in G(X)$ such that  $(\forall z) (x R^{\Box} z$ implies $z \in f$). Hence, taking $z = y$, we get $y \in f$, and thus $k(x) R^{\Box}_{G(X)} k(y)$. 

\noindent (2) Assume $k(x) R^{\Box}_{G(X)} k(y)$. Let $f = R^{\Box}(x)$. Since $R^{\Box} \circ {\geq} = R^{\Box}$, $R^{\Box} (x)$ is a down-subset of $X$, i.e. $R^{\Box}(x) \in G(X)$. Therefore, $y \in R^{\Box}(x)$, i.e. $xR^{\Box}y$. 

\noindent (3) Assume $xR^{\Diamond}y$. Let $f \in G(X)$ such that $y \in f$. Then taking $z = y$, the condition $(\exists z) (x R^{\Diamond} z$ and $z \in f)$ is trivially satisfied.  Thus $k(x) R^{\Diamond}_{G(X)} k(y)$. 

\noindent (4) Assume $k(x) R^{\Diamond}_{G(X)} k(y)$. Let $f = {\downarrow} y = \{ z \in X \mid z \leq y\}$. Clearly, $f \in G(X)$ and $y \in f$. Then  $(\exists z) (x R^{\Diamond} z$ and $z \leq y)$. Since $R^{\Diamond} \circ {\leq} = R^{\Diamond}$, it follows that $x R^{\Diamond} y$. 
\end{proof}

\begin{corollary} \label{frameiso} Let $F = ({\bf X}, R^{\Box}, R^{\Diamond})$ be a forest frame such that $R^{\Box} \circ {\geq} = R^{\Box}$ and $R^{\Diamond} \circ {\leq} = R^{\Diamond}$. Then the frame $F = ({\bf X}, R^{\Box}, R^{\Diamond})$ is isomorphic to the frame $({\bf F_{G(X)}}, R^{\Box}_{G(X)}, R^{\Diamond}_{G(X)})$. 
\end{corollary}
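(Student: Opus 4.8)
The plan is to exhibit the bijection $k$ from Lemma \ref{bijective} as the desired frame isomorphism and to verify that it respects all the structure of a forest frame, namely the underlying partial order and both accessibility relations. Recall that a frame isomorphism between forest frames is a bijection of the carrier sets that is simultaneously an order isomorphism of the two forests and that both preserves and reflects each of the two accessibility relations; so there are exactly three things to check for $k$, and the whole proof amounts to collecting facts already available.

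First I would invoke Lemma \ref{bijective} directly: it already guarantees that $k \colon X \to F_{G(X)}$ is a bijection and that $x \leq y$ in $\bf X$ if and only if $k(x) \supseteq k(y)$, i.e. $k$ is an order isomorphism from $({\bf X}, \leq)$ onto the forest $({\bf F_{G(X)}}, \supseteq)$ of prime filters of ${\bf G(X)}$. This settles the order-theoretic part with no further work, and in particular confirms that the target triple $({\bf F_{G(X)}}, R^{\Box}_{G(X)}, R^{\Diamond}_{G(X)})$ is built on a forest order-isomorphic to $\bf X$ via $k$.

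Next I would dispatch the two accessibility relations by assembling the four items of Lemma \ref{key}. For $R^{\Box}$: item (1) gives $x R^{\Box} y \Rightarrow k(x) R^{\Box}_{G(X)} k(y)$ unconditionally, while item (2), which is exactly where the hypothesis $R^{\Box} \circ {\geq} = R^{\Box}$ is consumed, gives the converse $k(x) R^{\Box}_{G(X)} k(y) \Rightarrow x R^{\Box} y$; together they yield the equivalence $x R^{\Box} y \iff k(x) R^{\Box}_{G(X)} k(y)$. Symmetrically, for $R^{\Diamond}$, items (3) and (4)---the latter consuming the hypothesis $R^{\Diamond} \circ {\leq} = R^{\Diamond}$---give $x R^{\Diamond} y \iff k(x) R^{\Diamond}_{G(X)} k(y)$. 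Hence $k$ both preserves and reflects each of the two relations.

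Combining the three checks, $k$ is a bijection that is an order isomorphism and that preserves and reflects both $R^{\Box}$ and $R^{\Diamond}$, which is precisely what it means for $k$ to be a frame isomorphism, establishing the claim. I do not anticipate any genuine obstacle, since the statement is engineered to be an immediate corollary; the only points requiring care are that both frame conditions in the hypothesis are needed exactly to activate items (2) and (4) of Lemma \ref{key} (items (1) and (3) hold for an arbitrary forest frame and so impose no constraint), and that the order isomorphism produced by Lemma \ref{bijective} is phrased with respect to reverse inclusion, which matches the forest order carried by $F_{G(X)}$.
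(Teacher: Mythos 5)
Your proof is correct and follows exactly the route the paper intends for this corollary: Lemma \ref{bijective} supplies the bijection $k$ together with the order isomorphism onto $({\bf F_{G(X)}},\supseteq)$, and items (1)--(4) of Lemma \ref{key} show that $k$ preserves and reflects $R^{\Box}$ and $R^{\Diamond}$, with the two hypotheses consumed precisely by items (2) and (4). Nothing further is needed.
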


In \cite[Theorem 7 (1-2)]{Cel08}, Celani showed what the effect of (D1) and (D2) is, for distributive modal algebras, on the relations $R_\Box$ and $R_\Diamond$ of their associated OR-frames. In the next result, we prove something similar. Indeed we prove what the effect of (D1), (D2) and (FS2) is on the forest-based representation of G\"odel algebras with operators by specific properties of the corresponding relations in forest frames.

 \begin{theorem}\label{keyD1}
 Let $({\bf X}, R_\Box, R_\Diamond)$ be a forest frame. Let $R'_\Box = R_\Box \circ {\geq}$, $R'_\Diamond = R_\Diamond \circ {\leq}$ and $R' = R'_\Box \cap R'_\Diamond$. Then
 \begin{enumerate}
 \item[(i)] $({\bf G(X)}, \beta, \delta)$ satisfies (D1) iff $R'_\Box = {R' \circ {\geq}}$
\item[(ii)] $({\bf G(X)}, \beta, \delta)$ satisfies (D2) iff $R'_\Diamond = {R' \circ {\leq}}$
\item[(iii)] $({\bf G(X)}, \beta, \delta)$ satisfies (FS2) iff $R'_\Box= {\geq\circ R'}$
 \end{enumerate}
 \end{theorem}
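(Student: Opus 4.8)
The plan is to deduce Theorem \ref{keyD1} from Theorem \ref{CJteo} by \emph{transporting} the three relational identities of the latter along a frame isomorphism, rather than re-running the filter-theoretic arguments from scratch. The key observation is that the derived relations $R'_\Box$, $R'_\Diamond$ and $R' = R'_\Box \cap R'_\Diamond$ are, up to the canonical forest isomorphism $k$ of Lemma \ref{bijective}, precisely the relations that Theorem \ref{CJteo} produces on the dual frame of the algebra $({\bf G(X)}, \beta, \delta)$. In this sense Theorem \ref{keyD1} is just Theorem \ref{CJteo} read intrinsically on ${\bf X}$.

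First I would record the two absorption properties $R'_\Box \circ {\geq} = R'_\Box$ and $R'_\Diamond \circ {\leq} = R'_\Diamond$. These follow at once from $R'_\Box = R_\Box \circ {\geq}$ and $R'_\Diamond = R_\Diamond \circ {\leq}$ together with the reflexivity and transitivity of the order (so ${\geq}\circ{\geq} = {\geq}$), and they say exactly that $({\bf X}, R'_\Box, R'_\Diamond)$ meets the hypotheses of Corollary \ref{frameiso}. Moreover, by Lemma \ref{diamond} (equivalently Corollary \ref{cor:HKframe}) the frames $({\bf X}, R_\Box, R_\Diamond)$ and $({\bf X}, R'_\Box, R'_\Diamond)$ induce one and the same algebra $({\bf G(X)}, \beta, \delta) = ({\bf G(X)}, \beta', \delta')$, so whether this algebra satisfies (D1), (D2) or (FS2) may be tested using either frame.

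Next I would invoke Corollary \ref{frameiso}: since $R'_\Box \circ {\geq} = R'_\Box$ and $R'_\Diamond \circ {\leq} = R'_\Diamond$, the map $k$ is an isomorphism of forest frames from $({\bf X}, R'_\Box, R'_\Diamond)$ onto the dual frame $({\bf F}_{G(X)}, R^\Box_{G(X)}, R^\Diamond_{G(X)})$ of $({\bf G(X)}, \beta, \delta)$. Concretely, by Lemma \ref{key} we have $x R'_\Box y$ iff $k(x) R^\Box_{G(X)} k(y)$ and $x R'_\Diamond y$ iff $k(x) R^\Diamond_{G(X)} k(y)$. Because $k$ matches both relations, it matches their intersection: writing $R_{G(X)} = R^\Box_{G(X)} \cap R^\Diamond_{G(X)}$, we obtain $x R' y$ iff $k(x) R_{G(X)} k(y)$. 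Finally, $k$ is an order isomorphism (Lemma \ref{bijective}), hence preserves $\leq$, $\geq$ and composition of relations. Applying Theorem \ref{CJteo} to the GAO $({\bf G(X)}, \beta, \delta)$ yields: (D1) holds iff $R^\Box_{G(X)} = R_{G(X)} \circ {\geq}$; (D2) holds iff $R^\Diamond_{G(X)} = R_{G(X)} \circ {\leq}$; (FS2) holds iff $R^\Box_{G(X)} = {\geq} \circ R_{G(X)}$. Transporting each identity along $k$ — which sends $R^\Box_{G(X)}$ to $R'_\Box$, $R^\Diamond_{G(X)}$ to $R'_\Diamond$, $R_{G(X)}$ to $R'$, and leaves the order-composites $\circ{\geq}$, $\circ{\leq}$, ${\geq}\circ$ intact — gives exactly (i), (ii) and (iii).

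The main obstacle here is bookkeeping rather than conceptual difficulty: one must verify carefully that the \emph{canonical} relations $R^\Box_{G(X)}, R^\Diamond_{G(X)}$ of the dual frame of $({\bf G(X)}, \beta, \delta)$ really coincide, under $k$, with $R'_\Box, R'_\Diamond$ — this is precisely where the absorption properties enter, through Lemma \ref{key}(2) and (4) — and that intersection and order-composition are indeed preserved by the isomorphism. An alternative, more self-contained route would be to mirror the proof of Theorem \ref{CJteo} directly on $({\bf G(X)}, \beta, \delta)$, arguing at the level of downsets and points of ${\bf X}$ and using $\beta = \beta'$, $\delta = \delta'$; this avoids the detour through ${\bf F}_{G(X)}$ but merely repeats the same computations, so the transport argument is the cleaner option.
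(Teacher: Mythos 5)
Your proposal is correct and follows essentially the same route as the paper's own proof: identify the algebras of $({\bf X}, R_\Box, R_\Diamond)$ and $({\bf X}, R'_\Box, R'_\Diamond)$ via Lemma \ref{diamond}, apply Corollary \ref{frameiso} to get the frame isomorphism $k$ onto $({\bf F_{G(X)}}, R^\Box_{G(X)}, R^\Diamond_{G(X)})$, and transport the three identities of Theorem \ref{CJteo} back along $k$. If anything, you spell out two steps the paper leaves implicit — the absorption properties $R'_\Box \circ {\geq} = R'_\Box$, $R'_\Diamond \circ {\leq} = R'_\Diamond$ needed to invoke Corollary \ref{frameiso}, and the fact that $k$ matches the intersections $R'$ and $R_{G(X)}$ — so your write-up is, if anything, more complete.
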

 
 \begin{proof}  First of all, due to Lemma \ref{diamond}, notice that the GAO generated by a forest frame $({\bf X}, R_\Box, R_\Diamond)$, $({\bf G(X)}, \beta, \delta)$, is the same a the one generated by the frame $({\bf X}, R'_\Box, R'_\Diamond)$, and hence we can apply Corollary \ref{frameiso} and get that $({\bf X}, R'_\Box, R'_\Diamond)$ is isomorphic to $({\bf F_{G(X)}}, R^{\Box}_{G(X)}, R^{\Diamond}_{G(X)})$. Then, letting $R_{G(X)}=R^\Box_{G(X)}\cap R^\Diamond_{G(X)}$, we have: 
 
 (i) By Theorem \ref{CJteo}, $({\bf G(X)}, \beta, \delta)$ satisfies (D1) iff $R^{\Box}_{G(X)} = R_{G(X)} \circ {\geq}$, and this holds, by Corollary \ref{frameiso}, iff $R'_\Box = R' \circ {\geq}$.  

 (ii) By Theorem \ref{CJteo}, $({\bf G(X)}, \beta, \delta)$ satisfies (D2) iff $R^{\Diamond}_{G(X)} = R_{G(X)} \circ {\leq}$, and this holds, by Corollary \ref{frameiso}, iff $R'_\Diamond = R' \circ {\leq}$. 
 
(iii) By Theorem \ref{CJteo}, $({\bf G(X)}, \beta, \delta)$ satisfies (FS2) iff $R^\Box_{G(X)} = {\geq \circ} R_{G(X)}$ and this holds, by Corollary \ref{frameiso}, iff $R'_\Box = {\geq\circ} R' $.
 \end{proof}

As we recalled at the beginning if this section, Dunn axioms have been mainly consideres in the frame of the positive fragment of classical (intuitionistic) logic. Axiomatic extensions of full intuitionistic modal logic by (D1) and (D2) have also been considered in \cite{Palmigiano2}, although e.g.\ in \cite{RodVid} the authors study the extension of bi-modal G\"odel logic by the axiom (D1). On the other hand, Fischer Servi axioms, for their own formulation that in fact needs the implication connective, have been quite deeply studied in the realm of intuitionistic modal logic, see \cite{Wolter0,Wolter}.
  
   We end this section with two examples showing that, indeed, $\mathbb{DGAO}\not\subseteq\mathbb{FSGAO}$ and $\mathbb{FSGAO}\not\subseteq\mathbb{DGAO}$, and hence showing that, in particular, (D1) and (FS2) are independent when considered in G\"odel modal logic and hence also on the modal intuitionistic basis a fortiori.
 
 The fist example is a GAO based on the free 1-generated G\"odel algebra that is a DGAO but it fails to prove (FS2).
 
  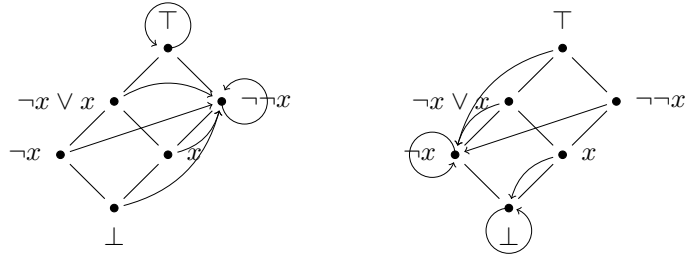
\begin{figure}[h]
 \begin{center}
 \begin{tikzpicture}

  \node [label=below:{${\bot}$}, label=below:{} ] (n1)  {} ;
  \node [above right of=n1,label=right:{${x}$}, label=below:{ }] (n2)  {} ;
  \node [above left of=n1,label=left:{${\neg x}$},label=below:{}] (n3) {} ;
  \node [above right of=n2,label=right:{${\neg \neg x}$},label=below:{}] (n4) {} ;
  \node [above left of=n4,label=above:{${\top}$},label=below:{}] (n6) {} ;
  \node [above right of=n3,label=left:{${\neg x \vee x}$},label=below:{}] (n5) {} ;

  \draw  (n1) -- (n2);
  \draw (n1) -- (n3);
  \draw  (n3) -- (n5);
  \draw  (n2) -- (n4);
  \draw  (n4) -- (n6);
  \draw  (n5) -- (n6);
  \draw  (n2) -- (n5);
  \draw [fill] (n1) circle [radius=.5mm];
  \draw [fill] (n2) circle [radius=.5mm];
  \draw [fill] (n3) circle [radius=.5mm];
  \draw [fill] (n4) circle [radius=.5mm];
  \draw [fill] (n5) circle [radius=.5mm];
  \draw [fill] (n6) circle [radius=.5mm];

  \draw [line width=0.07mm,->] (n2) edge[bend right] (n4);
    \draw [line width=0.07mm, ->] (n3) --(n4);
      \draw [line width=0.07mm, ->] (n5) edge[bend left] (n4);
        \draw [line width=0.07mm, ->] (n1) edge[bend right] (n4);
         \draw [line width=0.07mm, ->]  (n6) arc [radius=3mm, start angle=270, end angle=600 ]  (n6);
         \draw [line width=0.07mm, ->]  (n4) arc [radius=3mm, start angle=180, end angle=510 ]  (n4);
\end{tikzpicture}
\hspace{1cm}
\begin{tikzpicture}

  \node [label=below:{${\bot}$}, label=below:{} ] (n1)  {} ;
  \node [above right of=n1,label=right:{${x}$}, label=below:{ }] (n2)  {} ;
  \node [above left of=n1,label=left:{${\neg x}$},label=below:{}] (n3) {} ;
  \node [above right of=n2,label=right:{${\neg \neg x}$},label=below:{}] (n4) {} ;
  \node [above left of=n4,label=above:{${\top}$},label=below:{}] (n6) {} ;
  \node [above right of=n3,label=left:{${\neg x \vee x}$},label=below:{}] (n5) {} ;

  \draw  (n1) -- (n2);
  \draw (n1) -- (n3);
  \draw  (n3) -- (n5);
  \draw  (n2) -- (n4);
  \draw  (n4) -- (n6);
  \draw  (n5) -- (n6);
  \draw  (n2) -- (n5);
  \draw [fill] (n1) circle [radius=.5mm];
  \draw [fill] (n2) circle [radius=.5mm];
  \draw [fill] (n3) circle [radius=.5mm];
  \draw [fill] (n4) circle [radius=.5mm];
  \draw [fill] (n5) circle [radius=.5mm];
  \draw [fill] (n6) circle [radius=.5mm];

  \draw [line width=0.07mm, ->] (n2) edge[bend right] (n1);
    \draw [line width=0.07mm, ->] (n3) arc [radius=3mm, start angle=0, end angle=340 ]  (n3);
        \draw [line width=0.07mm, ->] (n5) edge[bend right] (n3);
            \draw [ line width=0.07mm,->] (n4) --(n3);
                \draw [line width=0.07mm, ->] (n6) edge[bend right] (n3);
   \draw [line width=0.07mm, ->]  (n1) arc [radius=3mm, start angle=90, end angle=430 ]  (n1);
\end{tikzpicture}\caption{A finite G\"odel algebra with a $\Box$ (left-hand-side) and a $\Diamond$ (right-hand-side) that satisfies Dunn's axioms but it does not satisfy (FS2).}
\label{figDnotFS}
\end{center}
\end{figure}

Define, on $\free(1)$, the following operators as in Figure \ref{figDnotFS}: 
\begin{center}
$\Box\top=\top$; $\Box y=\neg\neg x$ for all $y\neq \top$; $\Diamond \bot= \Diamond x=\bot$; $\Diamond\neg x=\Diamond(x\vee \neg x)=\Diamond\neg\neg x=\Diamond\top=\neg x$. 
\end{center}
Then, one has, $\Diamond x\to \Box\bot=\bot\to\neg\neg x=\top\not\leq \Box(x\to\bot)=\neg\neg x$ and hence (FS2) fails. 

However, (D1) and (D2) holds. In fact, as for (D1), notice that, for all $a, b$ such that $a\vee b\neq \top$, $\Box(a\vee b)=\neg\neg x$ and in these cases  $\Box a=\Box b=\neg\neg x$. Thus, $\Box(a\vee b)= \Box a\leq \Box a \vee \Diamond b$. Now, if $a\vee b=\top$ and avoiding the trivial case in which either $a$ or $b$ equals $\top$, one has that either $a$ or $b$ are $\neg\neg x$. Assume $a=\neg\neg x$. Then, if $b=\neg x$, $\Box (\neg\neg x\vee \neg x)=\top=\Box \neg\neg x\vee \Diamond \neg x=\neg\neg x\vee \neg x$. Conversely, if $b=\neg x\vee x$, again $\Box(\neg\neg x\vee (\neg x\vee x))=\top=\Box \neg\neg x\vee \Diamond (\neg x\vee x)=\neg\neg x\vee \neg x$. 

That (D2) also holds can be proved in a similar manner and the proof is omitted. 

As for the second example, consider the (directly indecomposable) G\"odel algebra ${\bf A}$ of Figure \ref{figFSnotD} below where $\Box$ and $\Diamond$ are so defined:
\begin{center}
$\Box \top=\Box d=\top$; $\Box c=\Box b=a$; $\Box a=\Box\bot=\bot$; $\Diamond\top=\Diamond d=\Diamond c= \Diamond b=\Diamond a=c$; $\Diamond\bot=\bot$. 
\end{center}

 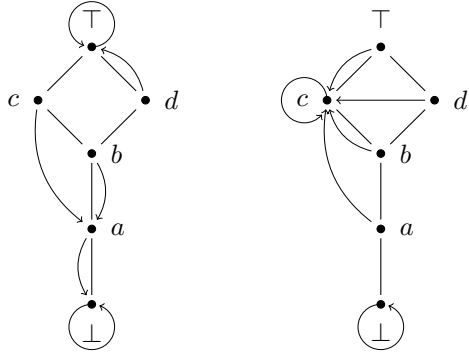
\begin{figure}[h]
\begin{center}
\begin{tikzpicture}

\node [label=below:{${\bot}$}, label=below:{} ] (n1)  {} ;
\node [above of=n1,label=right:{${a}$}, label=below:{ }] (n2)  {} ;
\node [above  of=n2,label=right:{${b}$}, label=below:{ }] (n3)  {} ;
\node [above right of=n3,label=right:{${d}$}, label=below:{ }] (n4)  {} ;
\node [above left of=n3,label=left:{${c}$}, label=below:{ }] (n5)  {} ;
\node [above right of=n5,label=above:{${\top}$}, label=below:{ }] (n6)  {} ;

  \draw  (n1) -- (n2);
  \draw  (n2) -- (n3);
    \draw  (n3) -- (n4);
      \draw  (n3) -- (n5);
        \draw  (n5) -- (n6);
          \draw  (n4) -- (n6);  
    \draw [fill] (n1) circle [radius=.5mm];
  \draw [fill] (n2) circle [radius=.5mm];
  \draw [fill] (n3) circle [radius=.5mm];
  \draw [fill] (n4) circle [radius=.5mm];
  \draw [fill] (n5) circle [radius=.5mm];
  \draw [fill] (n6) circle [radius=.5mm];
    \draw [line width=0.07mm, ->] (n4) edge[bend right] (n6);
    \draw [line width=0.07mm, ->] (n6) arc [radius=3mm, start angle=270, end angle=610 ]  (n6);
        \draw [line width=0.07mm, ->] (n3) edge[bend left] (n2);
    \draw [line width=0.07mm, ->] (n5) edge[bend right] (n2);
        \draw [line width=0.07mm, ->] (n2) edge[bend right] (n1);
    \draw [line width=0.07mm, ->] (n1) arc [radius=3mm, start angle=90, end angle=430 ]  (n1);

 
   \end{tikzpicture}
\hspace{1cm}
\begin{tikzpicture}
\node [label=below:{${\bot}$}, label=below:{} ] (n1)  {} ;
\node [above of=n1,label=right:{${a}$}, label=below:{ }] (n2)  {} ;
\node [above  of=n2,label=right:{${b}$}, label=below:{ }] (n3)  {} ;
\node [above right of=n3,label=right:{${d}$}, label=below:{ }] (n4)  {} ;
\node [above left of=n3,label=left:{${c}$}, label=below:{ }] (n5)  {} ;
\node [above right of=n5,label=above:{${\top}$}, label=below:{ }] (n6)  {} ;

  \draw  (n1) -- (n2);
  \draw  (n2) -- (n3);
    \draw  (n3) -- (n4);
      \draw  (n3) -- (n5);
        \draw  (n5) -- (n6);
          \draw  (n4) -- (n6);  
    \draw [fill] (n1) circle [radius=.5mm];
  \draw [fill] (n2) circle [radius=.5mm];
  \draw [fill] (n3) circle [radius=.5mm];
  \draw [fill] (n4) circle [radius=.5mm];
  \draw [fill] (n5) circle [radius=.5mm];
  \draw [fill] (n6) circle [radius=.5mm];
    \draw [line width=0.07mm, ->] (n2) edge[bend left] (n5);
    \draw [line width=0.07mm, ->] (n5) arc [radius=3mm, start angle=0, end angle=320 ]  (n5);
        \draw [line width=0.07mm, ->] (n3) edge[bend left] (n5);
    \draw [line width=0.07mm, ->] (n4) -- (n5);
        \draw [line width=0.07mm, ->] (n6) edge[bend right] (n5);
    \draw [line width=0.07mm, ->] (n1) arc [radius=3mm, start angle=90, end angle=430 ]  (n1);
   \end{tikzpicture}
\caption{A finite G\"odel algebra with a $\Box$ (left-hand-side) and a $\Diamond$ (right-hand-side) that satisfies Fischer Servi's axioms but it does not satisfy (D1).}
\label{figFSnotD}
\end{center}
\end{figure}

 In that GAO, one has: $\Box (a\vee d)=\Box d=\top\not\leq \Box a\vee \Diamond d=\bot\vee c=c$ and hence (D1) fails. On the other hand, it satisfies (FS1) and (FS2). In order to prove that, consider the forest (tree) $\bF_{\bf A}$ of the prime filters of ${\bf A}$  as in Figure \ref{figTree1} 
 \begin{figure}[h]
 \begin{center}
 \begin{tikzpicture}
\node [label=right:{${x}$}, label=below:{ }] (n1)  {} ;
\node [above  of=n1,label=right:{${y}$}, label=below:{ }] (n2)  {} ;
\node [above right of=n2,label=right:{${k}$}, label=below:{ }] (n3)  {} ;
\node [above left of=n2,label=left:{${z}$}, label=below:{ }] (n4)  {} ;

  \draw  (n1) -- (n2);
  \draw  (n2) -- (n3);
    \draw  (n2) -- (n4);
     
    \draw [fill] (n1) circle [radius=.5mm];
  \draw [fill] (n2) circle [radius=.5mm];
  \draw [fill] (n3) circle [radius=.5mm];
  \draw [fill] (n4) circle [radius=.5mm];
   \end{tikzpicture}
      \caption{The tree of prime filters of the G\"odel algebra ${\bf A}$ of Figure \ref{figFSnotD}.}
    \label{figTree1}
 \end{center}
 \end{figure}
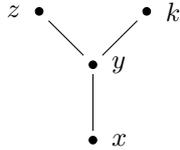
where $x$ denotes the principal filter generated by $a$; $y$ the principal filter generated by $b$; $z$ is the principal filter generated by $c$ and $k$ is the principal filter generated by $d$. Then,   
 compute the relations $R_\Box$ and $R_\Diamond$:
$$
R_\Box=\{(x,x),(x,y),(y,x),(y,y),(y,z),(z,x),(z,y),(z,z),(k,x),(k,y),(k,z)\}
$$
and 
$$
R_\Diamond=\{(x,x),(x,y),(x,z),(x,k),(y,x),(y,y),(y,z),(y,k),(k,x),(k,y),(k,z),(k,k)\}
$$
so that 
$$
R=R_\Box\cap R_\Diamond=\{(x,x),(x,y),(y,x),(y,y),(y,z),(k,x),(k,y),(k,z)\}
$$
It is not difficult to prove that $(\bF_{\bf A}, R)$ is an {\em IK-frame} in the sense of \cite{Ale} and therefore the algebra $({\bf G}(\bF_{\bf A}), \beta_{R}, \delta_R)$ is isomorphic to $({\bf A}, \Box, \Diamond)$ and it satisfies (FS1) and (FS2).


\section{Forest frames with a single relation} \label{one-relation}

In this section we review results in the literature about relational frames for Heyting and positive modal algebras with a single binary relation, and adapt them to our setting to finite GAOs. 
In particular, we will consider results by Palmigiano \cite{Ale} and Orlowska and Rewitzky \cite{Orlo} on dualities for Intuitionistic modal logics, as well as Celani and Jansana's results on duality for positive modal logic \cite{CelJan}. In the last part of the section we consider special forest frames in which the binary relation satisfies both the properties of monotonicity  and antimonotonicity in the first argument.

%
%
%
%
%
%

For the remining of this section, it is useful to recall the above Lemma \ref{bijective} showing that every finite forest is isomorphic to the forest of prime filters of its associated G\"odel algebra.
%
%
%
Furthermore, recall from Section \ref{sec:tworelations} that, for each GAO, $({\bf A}, \Box, \Diamond)$, we can consider the associated relational frame $({\bf F_A}, R_A)$, where  $R_A$ is the binary relation on $F_A$ defined as follows: for every $f, g \in F_A$, 
$$ f  R_A g   \quad \mbox { iff } \quad \Box^{-1}(f) \subseteq g \subseteq \Diamond^{-1}(f) .$$
Equivalently, one can define $R_A$ as the intersection of the two relations $R^\Box_A$ and $R^\Diamond_A$, i.e.\ $R_A = R^\Box_A \cap R^\Diamond_A$, where $f  R^\Box_A g$ if  $\Box^{-1}(f) \subseteq g$, and  $ f  R^\Diamond_A g$ if  $\Diamond(g) \subseteq f$.

\subsection{Forest frames for GAOs satisfying Fisher-Servi axioms} 

Palmigiano in \cite{Ale} and Or{\l}owska and Rewitzky in \cite{Orlo}  study duality theory for some intuitionistic modal logics. It is worth noticing that the notions of relational frames with a single relation in these papers that are relevant for our setting, namely the so-called IK-frames in \cite{Ale} and HK1-frames in \cite{Orlo}, coincide. They are shown to capture Heyting modal algebras satisfying the Fischer Servi axioms. 

As already recalled, G\"odel logic is the axiomatic extension of Intuitionistic logic with the pre-linearity axiom, and hence the variety of G\"odel algebras are the subvariety of Heyting algebras generated by the linearly-ordered ones. Thus, again, many results in \cite{Ale} and \cite{Orlo} also extend to our finitary setting of G\"odel algebras with operators, the basic difference being at the level of relational frames is that we consider here frames on forests rather than on preordered sets. 

\begin{definition} A relational frame $F = ({\bf X}, R)$ is a {\em FS-forest frame} provided that $\bf X$ is a forest and the binary relation $R \subseteq X \times X$ satisfies the following two conditions:  

(FS1) $(\cleq \circ R) \subseteq (R \circ \cleq)$

(FS2) $(R \circ \cgeq) \subseteq (\cgeq \circ R)$.

\end{definition}

Note that condition (FS1) is in fact the same as (CJ1), and conditions (FS1) and (FS2), that appear in the definition of IK-frames in \cite{Ale}, are respectively equivalent to the conditions that appear in the definition of HK1-frames in \cite{Orlo}.

\begin{lemma} \label{newxx} The conditions (FS1) and (FS2) are respectively equivalent  to: 

{\em (FS1')} $(\cleq \circ R \circ \cleq) = (R \circ \cleq)$  

{\em (FS2')} $(\cgeq \circ R \circ \cgeq) = (\cgeq \circ R)$. 
\end{lemma}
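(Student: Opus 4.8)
The plan is to treat this purely as an identity in the calculus of binary relations, exploiting only three elementary facts about the forest order: (i) $\leq$ and its converse $\geq$ are reflexive, so that $R \subseteq \leq \circ R$, $R \subseteq R \circ \leq$, and dually $R \subseteq \geq \circ R$, $R \subseteq R \circ \geq$; (ii) $\leq$ and $\geq$ are transitive, so that $\leq \circ \leq = \leq$ and $\geq \circ \geq = \geq$; and (iii) relational composition is monotone, i.e. $S \subseteq T$ implies $U \circ S \subseteq U \circ T$ and $S \circ U \subseteq T \circ U$. Nothing specific to forests beyond $\leq$ being a partial order is needed, and throughout I use the paper's composition convention, namely $x\,(S \circ T)\,y$ iff $\exists z\,(x\,S\,z$ and $z\,T\,y)$.

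For the equivalence (FS1) $\Leftrightarrow$ (FS1') I would argue both directions. Assuming (FS1'), that is $(\leq \circ R \circ \leq) = (R \circ \leq)$, I obtain (FS1) by composing $R \subseteq R \circ \leq$ on the left with $\leq$: this yields $\leq \circ R \subseteq \leq \circ R \circ \leq = R \circ \leq$. Conversely, assuming (FS1), the inclusion $R \circ \leq \subseteq \leq \circ R \circ \leq$ is immediate from reflexivity ($R \subseteq \leq \circ R$ composed with $\leq$ on the right), while the reverse inclusion follows by applying (FS1) inside $\leq \circ R \circ \leq = (\leq \circ R) \circ \leq \subseteq (R \circ \leq) \circ \leq = R \circ (\leq \circ \leq) = R \circ \leq$, the last step using transitivity.

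The equivalence (FS2) $\Leftrightarrow$ (FS2') is entirely symmetric, obtained from the previous argument by swapping $\leq$ with $\geq$ and interchanging the roles of left and right composition. Concretely, assuming (FS2'), from $R \subseteq \geq \circ R$ composed with $\geq$ on the right one gets $R \circ \geq \subseteq \geq \circ R \circ \geq = \geq \circ R$, which is (FS2). Conversely, under (FS2), the inclusion $\geq \circ R \subseteq \geq \circ R \circ \geq$ is again reflexivity, while $\geq \circ R \circ \geq = \geq \circ (R \circ \geq) \subseteq \geq \circ (\geq \circ R) = (\geq \circ \geq) \circ R = \geq \circ R$ uses (FS2) together with transitivity.

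I do not expect any real obstacle here: once the composition convention is fixed, each inclusion is a one-line monotonicity argument. The only point requiring care is bookkeeping the direction of composition, so that reflexivity is used to create the idempotent factors $\leq$ and $\geq$ on the correct side, and transitivity is used to collapse the repeated factors $\leq \circ \leq$ and $\geq \circ \geq$.
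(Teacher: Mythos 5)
Your proof is correct and follows essentially the same route as the paper's: both arguments rest on reflexivity of $\leq$ and $\geq$ to obtain the trivial inclusions, monotonicity of composition to insert (FS1)/(FS2) inside a composite, and idempotency $\leq\circ\leq\,=\,\leq$, $\geq\circ\geq\,=\,\geq$ to collapse the result. The only cosmetic difference is organizational: the paper first reduces the equalities (FS1'), (FS2') to one-sided inclusions and then proves those equivalent to (FS1), (FS2), whereas you fold that trivial-inclusion observation into each direction of the equivalence.
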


\begin{proof} First of all, note that (FS1') and (FS2') are respectively equivalent to 

{\em (FS1'')} $(\cleq \circ R \circ \cleq) \subseteq (R \circ \cleq)$  

{\em (FS2'')} $(\cgeq \circ R \circ \cgeq) \subseteq ( \cgeq \circ R )$.

\noindent since the reverse inclusions always hold. Now let us prove that (FS2) is equivalent to (FS2''): 
\begin{itemize}
\item[-] Assume $(R \circ \cgeq) \subseteq (\cgeq \circ R)$ holds.  Then, by monotonicity (wrt set inclusion) of the composition,  $(\cgeq \circ R \circ \cgeq) \subseteq (\cgeq \circ \cgeq \circ R)$, but $\cgeq \circ \cgeq = \cgeq$, and thus, $(\cgeq \circ R \circ \cgeq) \subseteq (\cgeq \circ R)$. 
\item[-] Conversely, assume $(\cgeq \circ R \circ \cgeq) \subseteq  (\cgeq \circ R)$. But, trivially, $(R \circ \cgeq) \subseteq (\cgeq \circ R \circ \cgeq)$, and hence, by transitivity, $(R \circ \cgeq) \subseteq (\cgeq \circ R)$. 
\end{itemize}
The case of (FS1) and (FS1'') can be proved analogously. 
\end{proof}

%
%
%
%
%
%

\noindent Then, if $F = ({\bf X}, R)$ is a FS-forest frame, then the operations $\beta'_R = \beta_{\cgeq \circ R}$ and $\delta_R$ on subsets of $X$ are closed on the set $G(X)$ of downsets of $(X, \leq)$ and adapting the results in \cite{Ale,Orlo}, we have that  the modal algebra
$${\bf G}'(F) = ({\bf G(X)}, \beta'_{R}, \delta_R)$$
is a FS-GAO. Now, ${\bf G}'(F)$ induces its associated  relational frame $({\bf F_{G(X)}}, R_{G'(F)})$ as defined above. Next proposition shows that we recover the initial frame $F$ up to a isomorphism. 

\begin{proposition}  \label{FSP1} 
For every FS-forest frame $F=({\bf X}, R)$, $({\bf F_{G(X)}}, R_{G'(F)})$ is a FS-forest frame and, moreover, $({\bf X}, R) \cong ({\bf F_{G(X)}}, R_{G'(F)})$.
\end{proposition}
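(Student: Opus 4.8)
The plan is to establish both assertions through the single order isomorphism $k\colon X\to F_{G(X)}$ of Lemma~\ref{bijective}; since $k$ is already an isomorphism of the underlying forests, everything reduces to showing that $k$ transports $R$ exactly onto $R_{G'(F)}$, and that the latter relation is again of Fischer Servi type.

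First I would factor the single relation through the two auxiliary relations $R_\Box:={\cgeq}\circ R$ and $R_\Diamond:=R\circ{\cleq}$ and check that $(\bF,R_\Box,R_\Diamond)$ is a forest frame to which Corollary~\ref{frameiso} applies. Indeed $R_\Box$ satisfies $(M)$ since ${\cgeq}\circ R_\Box={\cgeq}\circ{\cgeq}\circ R=R_\Box$, and $R_\Diamond$ satisfies $(A)$ since ${\cleq}\circ R_\Diamond={\cleq}\circ R\circ{\cleq}=R\circ{\cleq}=R_\Diamond$ by (FS1$'$) of Lemma~\ref{newxx}; moreover $R_\Box\circ{\cgeq}=R_\Box$ is exactly (FS2$'$) and $R_\Diamond\circ{\cleq}=R_\Diamond$ is immediate. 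By the very definition $\beta'_R=\beta_{{\cgeq}\circ R}=\beta_{R_\Box}$, and, exactly as in the proof of Lemma~\ref{diamond}, $\delta_R=\delta_{R\circ{\cleq}}=\delta_{R_\Diamond}$ on downsets, so the GAO attached to $(\bF,R_\Box,R_\Diamond)$ is precisely ${\bf G}'(F)$. Hence the box- and diamond-relations $R^\Box_{G(X)},R^\Diamond_{G(X)}$ of Lemma~\ref{key} coincide with $R^\Box_{G'(F)},R^\Diamond_{G'(F)}$, and Corollary~\ref{frameiso} yields that $k$ is a frame isomorphism for each of them:
\[
x\,({\cgeq}\circ R)\,y \iff k(x)\,R^\Box_{G'(F)}\,k(y)\quad\text{and}\quad x\,(R\circ{\cleq})\,y\iff k(x)\,R^\Diamond_{G'(F)}\,k(y).
\]
Intersecting, and recalling $R_{G'(F)}=R^\Box_{G'(F)}\cap R^\Diamond_{G'(F)}$, shows that $k$ carries the relation $({\cgeq}\circ R)\cap(R\circ{\cleq})$ onto $R_{G'(F)}$.

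The remaining, and genuinely delicate, point is to identify $({\cgeq}\circ R)\cap(R\circ{\cleq})$ with $R$ itself. The inclusion $R\subseteq({\cgeq}\circ R)\cap(R\circ{\cleq})$ holds by reflexivity of $\leq$. For the converse one is given $u\leq x$ with $u\,R\,y$ and $v\leq y$ with $x\,R\,v$, and must derive $x\,R\,y$; this is where I expect the main obstacle to lie, and where both Fischer Servi conditions and the forest hypothesis (that $\downarrow x$ and $\downarrow y$ are chains, forcing the two witnesses $u$ and $v$ into a common line) have to be combined. I would isolate this as a tightness lemma, namely $R=({\cgeq}\circ R)\cap(R\circ{\cleq})$ for FS-forest relations; granting it, $k$ both preserves and reflects $R$, so that $({\bf X},R)\cong({\bf F_{G(X)}},R_{G'(F)})$.

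Finally, that $({\bf F_{G(X)}},R_{G'(F)})$ is itself an FS-forest frame follows at once by transporting (FS1) and (FS2) along the isomorphism $k$ just produced, these conditions being preserved by forest isomorphisms; alternatively, since ${\bf G}'(F)$ is an FS-GAO, one may read off the conditions on $R_{G'(F)}$ directly from Theorem~\ref{CJteo} together with Lemma~\ref{newxx}. Either route closes the argument once the tightness step is in hand.
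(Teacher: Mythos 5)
Your first three steps are sound, and they constitute a genuinely self-contained argument where the paper simply cites \cite{Orlo} (Corollary 5.7(3) there for the frame conditions, and Lemma 4.5 there for the equivalence $xRy$ iff $k(x)\,R_{G'(F)}\,k(y)$): the factorization $R_\Box={\geq}\circ R$, $R_\Diamond=R\circ{\leq}$, the verification via Lemma \ref{newxx} that $({\bf X},R_\Box,R_\Diamond)$ is a forest frame meeting the hypotheses of Corollary \ref{frameiso}, the identification of its GAO with ${\bf G}'(F)$, and the conclusion that $k$ carries $({\geq}\circ R)\cap(R\circ{\leq})$ onto $R_{G'(F)}$ are all correct. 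The gap is the step you defer, and it cannot be repaired: the ``tightness lemma'' $R=({\geq}\circ R)\cap(R\circ{\leq})$ is false for FS-forest frames as defined in the paper, i.e.\ assuming only (FS1) and (FS2), and the forest hypothesis does not help. Take the two-element chain $a<b$ and $R=\{(a,a),(a,b),(b,a)\}$. Then ${\geq}\circ R$ and $R\circ{\leq}$ are both the total relation on $\{a,b\}$, so (FS1) and (FS2) hold trivially, while $({\geq}\circ R)\cap(R\circ{\leq})$ is total and strictly contains $R$. Since this counterexample lives on a chain, no combination of the confluence conditions with the structure of forests (your ``common line'' idea) can force the missing pair into $R$.

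Moreover, your own correct partial work shows that this is not just a missing argument but an obstruction to the statement itself under the paper's definition. The canonical relation of any FS-GAO is tight: by Theorem \ref{CJteo}(2)--(3) together with Proposition \ref{prop:D2-FS2}, one has $R_{G'(F)}=({\geq}\circ R_{G'(F)})\cap(R_{G'(F)}\circ{\leq})$. Hence $k$ (or any order isomorphism) can carry $R$ onto $R_{G'(F)}$ only if $R$ is itself tight. In the example above, ${\bf G}'(F)$ is the three-element chain with $\beta'(m)=\bot$ and $\delta(m)=\top$ (writing $m=\{a\}$), its forest of prime filters is a two-element chain, and $R_{G'(F)}$ is the total relation; as the order isomorphism between two two-element chains is unique, $({\bf X},R)\not\cong({\bf F_{G(X)}},R_{G'(F)})$. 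So what your argument actually proves is Proposition \ref{FSP1} under the extra hypothesis $R=({\geq}\circ R)\cap(R\circ{\leq})$ --- precisely the property enjoyed by the relation $R'$ of Proposition \ref{FSP2}(iii), and built into the HK1-frames of \cite{Orlo} on which the paper's citation-based proof rests. To close your proof within the paper's framework you would either have to add this tightness condition to the definition of FS-forest frame, or weaken the conclusion to $({\bf X},R')\cong({\bf F_{G(X)}},R_{G'(F)})$.
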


\begin{proof} That $({\bf F_{G(X)}}, R_{G'(F)})$ is a FS-forest frame directly follows from (3) of \cite[Corollary 5.7]{Orlo}. Let the mapping $k : X \to F_{G(X)}$ be defined, for any $x \in X$, by $k(x) = \{f \in G(X) \mid x \in f\}$. Then  Lemma \ref{bijective}
shows that $k$ is bijective and order preserving, while \cite[Lemma 4.5]{Orlo} shows that, for all $x, y \in X$, $xRy$ iff $k(x) R_{G'(F)} k(y)$. 
\end{proof}

\begin{proposition} \label{FSP2} For any FS-frame $F = ({\bf X}, R)$, let $R' = (\cgeq \circ R ) \cap (R \circ \cleq)$. Then: 
\begin{itemize}
\item[(i)]  $R'$ satisfies (FS1) and (FS2)
\item[(ii)]  $(\cgeq \circ R') = (\cgeq \circ R)$,  $(R' \circ \cleq) = (R \circ \cleq)$
\item[(iii)]  $R' = (\cgeq \circ R' ) \cap (R' \circ \cleq)$
\item[(iv)]  $\beta_{\geq \circ R'} = \beta_{\geq \circ R}$, $\delta_{R'} = \delta_{R}$
\end{itemize}
\end{proposition}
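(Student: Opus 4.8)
The plan is to establish part (ii) first, since (i), (iii) and (iv) all reduce to it. The two defining inclusions $R'\subseteq(\geq\circ R)$ and $R'\subseteq(R\circ\leq)$ hold by construction, while reflexivity of the order gives $R\subseteq(\geq\circ R)$ and $R\subseteq(R\circ\leq)$, hence $R\subseteq R'$. Composing on the left with $\geq$ and using $\geq\circ\geq=\geq$ (which holds since $\geq$ is reflexive and transitive) yields $(\geq\circ R')\subseteq(\geq\circ\geq\circ R)=(\geq\circ R)$ from $R'\subseteq(\geq\circ R)$, and the reverse inclusion $(\geq\circ R)\subseteq(\geq\circ R')$ from $R\subseteq R'$; together these give $(\geq\circ R')=(\geq\circ R)$. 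Symmetrically, composing on the right with $\leq$ and using $\leq\circ\leq=\leq$ gives $(R'\circ\leq)=(R\circ\leq)$. Note that this part uses only the order axioms, not (FS1)--(FS2).

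For (i) I would invoke the equivalent forms (FS1$'$) and (FS2$'$) from Lemma \ref{newxx}. From $R'\subseteq(R\circ\leq)$ I get $(\leq\circ R')\subseteq(\leq\circ R\circ\leq)$, which equals $(R\circ\leq)$ by (FS1$'$) for $R$, and $(R\circ\leq)=(R'\circ\leq)$ by (ii); hence $(\leq\circ R')\subseteq(R'\circ\leq)$, i.e.\ $R'$ satisfies (FS1). Dually, from $R'\subseteq(\geq\circ R)$ I get $(R'\circ\geq)\subseteq(\geq\circ R\circ\geq)=(\geq\circ R)=(\geq\circ R')$, using (FS2$'$) and (ii), so $R'$ satisfies (FS2). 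Part (iii) is then immediate: substituting the two equalities of (ii) into the definition of $R'$ gives $(\geq\circ R')\cap(R'\circ\leq)=(\geq\circ R)\cap(R\circ\leq)=R'$.

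For (iv), the box identity $\beta_{\geq\circ R'}=\beta_{\geq\circ R}$ is immediate, because by (ii) the two defining relations $\geq\circ R'$ and $\geq\circ R$ literally coincide, and the operator $\beta_{(-)}$ depends only on its relation. For the diamond identity $\delta_{R'}=\delta_{R}$, the inclusion $\delta_R(a)\subseteq\delta_{R'}(a)$ follows from $R\subseteq R'$; for the converse I would use that $a\in G(\bF)$ is a downset: if $y\,R'\,z$ with $z\in a$, then $R'\subseteq(R\circ\leq)$ provides some $v$ with $y\,R\,v$ and $v\leq z$, whence $v\in a$ and $y\in\delta_R(a)$ (this is exactly the argument of Lemma \ref{diamond}(1)). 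The whole proof is a sequence of routine relation-algebra manipulations; the only genuine inputs are Lemma \ref{newxx} for (i) and the downset-closure of $a$ for the diamond half of (iv). The main thing to get right is the ordering of the argument---proving (ii) first---since every other clause is then a one-line consequence of it.
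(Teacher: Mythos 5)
Your proof is correct and takes essentially the same route as the paper's: the same relation-algebra manipulations based on $R\subseteq R'$, idempotence of $\leq$ and $\geq$, the equivalent forms of Lemma \ref{newxx} for part (i), and the downset argument of Lemma \ref{diamond}(1) for the diamond half of (iv) (which the paper invokes as the fact $\delta_R=\delta_{R\circ\leq}$). The only differences are organizational---you prove (ii) first and derive (i) from it, whereas the paper proves (i) directly from $R\subseteq R'$---and your version of (ii) in fact corrects a left/right slip in the paper's own text, which computes $(R'\circ\geq)$ where the claimed equality concerns $(\geq\circ R')$.
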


\begin{proof} \begin{itemize}

\item[(i)] As for (FS1), the proof is practically the same than for (CJ1) in (i) of Prop. \ref{CJP2}. And as for (FS2) we have $(R' \circ \cgeq) = (((\cgeq \circ R ) \cap (R \circ \cleq)) \circ \cgeq) \subseteq (\cgeq \circ R \circ \cgeq) = (\cgeq \circ R)  \subseteq  (\cgeq \circ R' )$. 

\item[(ii)] The inclusions $\supseteq$'s are direct, let us prove the inclusions $\subseteq$'s. We have: $(R' \circ \cgeq) =  (((\cgeq \circ R) \cap (R \circ \cleq)) \circ \cgeq) \subseteq  ((\cgeq \circ R) \circ \cgeq) = (\cgeq \circ R)$, and similarly $(R' \circ \cleq) =  (((\cgeq \circ R) \cap (R \circ \cleq)) \circ \cleq) \subseteq  ((R \circ \cleq) \circ \cleq) = (R \circ \cleq)$. 

\item[(iii)] It directly follows from the definition of $R'$ and (ii). 

\item[(iv)] It directly follows from (ii) and the fact that  $\delta_R = \delta_{R\circ \cleq}. $
\end{itemize}
\end{proof}

We now present a final result on FS-forest frames that 
a direct consequence of the  properties proved in Proposition \ref{FSP1} and Proposition \ref{FSP2} above. 

\begin{corollary}  For every FS-forest frame $F = ({\bf X}, R)$, let $R' = (\cgeq \circ R ) \cap (R \circ \cleq)$. Then $F' = ({\bf X}, R')$ is a FS-forest frame that is equivalent to $F = ({\bf X}, R)$, i.e. ${\bf G}'(F') ={\bf G}(F)$. 
\end{corollary}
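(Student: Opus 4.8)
The plan is to derive both assertions directly from the properties already collected in Proposition \ref{FSP2}, since this corollary is essentially a repackaging of those facts together with the definitions of FS-forest frame and of the associated FS-GAO ${\bf G}'(\cdot)$.

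First I would check that $F' = ({\bf X}, R')$ is an FS-forest frame. The carrier $\bf X$ is unchanged and is therefore still a forest, so the only thing to verify is that the relation $R'$ satisfies the two frame conditions. But this is exactly the content of Proposition \ref{FSP2}(i), which asserts that $R' = (\cgeq \circ R) \cap (R \circ \cleq)$ satisfies both (FS1) and (FS2). Hence $F'$ is an FS-forest frame by definition.

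Next I would establish the equivalence of $F'$ and $F$, i.e.\ that they induce the same FS-GAO, ${\bf G}'(F') = {\bf G}'(F)$. By the definition of the associated FS-GAO recalled above, ${\bf G}'(F) = ({\bf G(X)}, \beta_{\cgeq \circ R}, \delta_R)$ and ${\bf G}'(F') = ({\bf G(X)}, \beta_{\cgeq \circ R'}, \delta_{R'})$. Both are built on the very same underlying G\"odel algebra ${\bf G(X)}$ of downsets of $\bf X$, so it suffices to compare the two pairs of modal operators. Proposition \ref{FSP2}(iv) supplies precisely $\beta_{\cgeq \circ R'} = \beta_{\cgeq \circ R}$ and $\delta_{R'} = \delta_R$, so the box-operators coincide and the diamond-operators coincide. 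Therefore the two FS-GAOs are literally the same algebra, which is the asserted equivalence.

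I expect no genuine obstacle here: the corollary is a bookkeeping consequence of Proposition \ref{FSP2}. The only point requiring a sliver of care is to unfold the notation ${\bf G}'(\cdot)$ so that the equality of operators furnished by part (iv) is correctly read off as the equality of the two algebraic structures, while part (i) independently certifies that $F'$ qualifies as an FS-forest frame to begin with.
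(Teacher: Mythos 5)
Your proposal is correct and follows essentially the same route as the paper, which presents this corollary without a written proof, as a direct consequence of Propositions \ref{FSP1} and \ref{FSP2}; your unfolding via Proposition \ref{FSP2}(i) (frame conditions for $R'$) and \ref{FSP2}(iv) (equality of the operators $\beta_{\cgeq\circ R'}=\beta_{\cgeq\circ R}$ and $\delta_{R'}=\delta_R$) is exactly the intended argument. You also correctly read the equality in the statement as ${\bf G}'(F')={\bf G}'(F)$, which is the natural interpretation given the definition of the FS-GAO associated to an FS-forest frame in this subsection.
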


\subsection{Forest frames for GAOs satisfying Dunn axioms}

In \cite{CelJan} Celani and Jansana study the duality theory for Dunn's positive modal logic \cite{Dunn}. Positive modal algebras can always be expanded with a implication operation such that the resulting structure is a Dunn-GAO. Many results can be easily extended to our finitary setting of G\"odel algebras with operators. 

\begin{definition} A relational frame $F = ({\bf X}, R)$ is a {\em CJ-forest frame} provided that $\bf X$ is a forest and the binary relation $R \subseteq X \times X$ satisfies the following two conditions:  \vspace{0.1cm}

(CJ1): $(\cleq \circ R) \subseteq (R \circ \cleq)$

(CJ2): $(\cgeq \circ R) \subseteq (R \circ \cgeq)$.
\end{definition}
%

The proof of the following lemma is very similar to that of Lemma \ref{newxx} and it is omitted.

\begin{lemma}  \label{CJcond} Conditions (CJ1) and (CJ2) are respectively equivalent  to: 

{\em (CJ1')} $(\cleq \circ R \circ \cleq) = (R \circ \cleq)$  

{\em (CJ2')} $(\cgeq \circ R \circ \cgeq) = (R \circ \cgeq)$. 

\end{lemma}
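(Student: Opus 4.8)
The plan is to mirror the proof of Lemma \ref{newxx}, replacing the Fischer Servi conditions by the Dunn-style (CJ) ones. First I would reduce each of the equalities (CJ1') and (CJ2') to a single inclusion. Since the identity relation is contained in both $\cleq$ and $\cgeq$ (reflexivity), we always have $(R \circ \cleq) \subseteq (\cleq \circ R \circ \cleq)$ and $(R \circ \cgeq) \subseteq (\cgeq \circ R \circ \cgeq)$. Hence (CJ1') and (CJ2') are equivalent, respectively, to the one-sided inclusions (CJ1'') $(\cleq \circ R \circ \cleq) \subseteq (R \circ \cleq)$ and (CJ2'') $(\cgeq \circ R \circ \cgeq) \subseteq (R \circ \cgeq)$, and it suffices to establish (CJ1) $\Leftrightarrow$ (CJ1'') and (CJ2) $\Leftrightarrow$ (CJ2'').

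For (CJ2) $\Rightarrow$ (CJ2''), I would compose both sides of $(\cgeq \circ R) \subseteq (R \circ \cgeq)$ with $\cgeq$ on the right; by monotonicity of relational composition this gives $(\cgeq \circ R \circ \cgeq) \subseteq (R \circ \cgeq \circ \cgeq)$, and since $\cgeq$ is reflexive and transitive we have $\cgeq \circ \cgeq = \cgeq$, yielding exactly (CJ2''). For the converse, the trivial inclusion $(\cgeq \circ R) \subseteq (\cgeq \circ R \circ \cgeq)$ combined with (CJ2'') gives $(\cgeq \circ R) \subseteq (R \circ \cgeq)$ by transitivity of $\subseteq$, i.e.\ (CJ2).

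The equivalence (CJ1) $\Leftrightarrow$ (CJ1'') is entirely analogous: one composes $(\cleq \circ R) \subseteq (R \circ \cleq)$ with $\cleq$ on the right and uses $\cleq \circ \cleq = \cleq$ for one direction, and the trivial inclusion $(\cleq \circ R) \subseteq (\cleq \circ R \circ \cleq)$ for the other. The only substantive difference from Lemma \ref{newxx} is that, whereas in (FS2) the extra copy of $\cgeq$ sits on the left of the right-hand term, in (CJ2) it sits on the right, so each composition above must be carried out on the right rather than on the left. I do not expect any genuine obstacle here: the argument is purely formal manipulation of relational compositions, and the only point requiring care is to apply the idempotency facts $\cleq \circ \cleq = \cleq$ and $\cgeq \circ \cgeq = \cgeq$ on the correct side in each step.
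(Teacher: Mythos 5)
Your proof is correct and is exactly the argument the paper has in mind: the paper omits the proof of Lemma \ref{CJcond}, stating only that it is ``very similar to that of Lemma \ref{newxx}'', and your proposal carries out precisely that mirroring, including the one correct adjustment that for (CJ2) the extra $\cgeq$ must be composed on the right (using $\cgeq\circ\cgeq=\cgeq$) rather than on the left as in (FS2). No gaps.
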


%
%
%
%
%
%

\noindent Moreover, if $F = ({\bf X}, R)$ is a CJ-forest frame, then the operations $\beta_R$ and $\delta_R$ on subsets of $X$ are in fact closed on the set $G(X)$ of downsets of $(X, \leq)$.  Then, based on \cite{CelJan}, one can check that the modal algebra 
$${\bf G}(F) = ({\bf G(X)}, \beta_R, \delta_R)$$
is a Dunn-GAO.  Hence, by Proposition \ref{bool}, the operators  $\beta_R$ and $\delta_R$ are closed on the set of Boolean elements of ${\bf G(X)}$.

It is also very interesting to observe that any CJ-forest frame is equivalent to a basic forest frame with two (different) relations in the sense of generating the same algebra. Indeed, given a CJ-frame  $F = ({\bf X}, R)$, let us consider the following two relations: $R_\Box = R \circ \cgeq$ and $R_\Diamond =  R \circ \cleq$. Then in \cite{CelJan} the authors prove that
$$\beta_R = \beta_{R_\Box}, \quad \delta_R = \delta_{R_\Diamond}. $$
Now, consider the intersection of these two relations  $R' = R_\Box \cap R_\Diamond = (R \circ \cgeq) \cap (R \circ \cleq)$. Clearly, $R \subseteq R'$, and it is not hard to prove the following further properties. 

\begin{proposition} \label{CJP2} For any CJ-forest frame $F = ({\bf X}, R)$, define a new relation $R' = (R \circ \cgeq) \cap (R \circ \cleq)$. Then: 
\begin{itemize}
\item[(i)] $R'$ satisfies (CJ1) and (CJ2)
\item[(ii)] $(R' \circ \cgeq) = (R \circ \cgeq)$,  $(R' \circ \cleq) = (R \circ \cleq)$, i.e. $R'_\Box = R_\Box$ and $R'_\Diamond = R_\Diamond$ 
\item[(iii)] $R'$ satisfies (FS2)
\item[(iv)] $\cgeq \circ  (R \circ \cgeq) = R \circ \cgeq$,  $\cleq \circ  (R \circ \cleq) = R \circ \cleq$, i.e. $\cgeq \circ R_\Box = R_\Box$ and $\cleq \circ R_\Diamond = R_\Diamond$
\item[(v)] $R' = (R' \circ \cgeq) \cap (R' \circ \cleq)$, i.e. $R' = R'_\Box \cap R'_\Diamond$
\item[(vi)] $\beta_{R'} = \beta_{R}$, $\delta_{R'} = \delta_{R}$
\end{itemize}
\end{proposition}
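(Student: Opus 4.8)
The plan is to fix the abbreviations $R_\Box = R\circ\cgeq$ and $R_\Diamond = R\circ\cleq$, to dispatch the ``structural'' items (iv), (ii), (v), (vi) first by pure relation algebra, then to read off (i), and only then to isolate (iii) as the one claim carrying genuine content. Two facts would be used throughout: that $\cgeq,\cleq$ are reflexive and transitive, so $\cgeq\circ\cgeq=\cgeq$ and $\cleq\circ\cleq=\cleq$; and that reflexivity gives $R\subseteq R\circ\cgeq$ and $R\subseteq R\circ\cleq$, hence $R\subseteq R'$ while trivially $R'\subseteq R_\Box$ and $R'\subseteq R_\Diamond$.

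Item (iv) is just a restatement of Lemma~\ref{CJcond}: $\cgeq\circ R_\Box=\cgeq\circ R\circ\cgeq=R\circ\cgeq=R_\Box$ is (CJ2$'$), and $\cleq\circ R_\Diamond=R_\Diamond$ is (CJ1$'$). For (ii) I would sandwich: from $R\subseteq R'\subseteq R_\Box$ one gets $R\circ\cgeq\subseteq R'\circ\cgeq\subseteq R_\Box\circ\cgeq=R\circ(\cgeq\circ\cgeq)=R\circ\cgeq$, forcing $R'\circ\cgeq=R_\Box$, and symmetrically $R'\circ\cleq=R_\Diamond$. Then (i) drops out: since $R'\subseteq R_\Box$, item (iv) gives $\cgeq\circ R'\subseteq\cgeq\circ R_\Box=R_\Box=R'\circ\cgeq$, which is (CJ2) for $R'$, and the dual computation with $R'\subseteq R_\Diamond$ yields (CJ1). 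Item (v) is immediate from (ii), as $(R'\circ\cgeq)\cap(R'\circ\cleq)=R_\Box\cap R_\Diamond=R'$ by definition. Finally (vi) follows from the observation underlying Lemma~\ref{diamond}, namely that on downsets $\beta_S=\beta_{S\circ\cgeq}$ and $\delta_S=\delta_{S\circ\cleq}$ for any relation $S$ (the downset absorbs the extra $\cgeq$/$\cleq$ step); applying this to $S=R'$ and $S=R$ and using (ii), $\beta_{R'}=\beta_{R'\circ\cgeq}=\beta_{R_\Box}=\beta_{R\circ\cgeq}=\beta_R$, and likewise $\delta_{R'}=\delta_R$.

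The step I expect to be the real obstacle is (iii). Using (ii), the inclusion $(R'\circ\cgeq)\subseteq(\cgeq\circ R')$ defining (FS2) for $R'$ is equivalent to $R_\Box\subseteq\cgeq\circ R'$, and since the reverse inclusion $\cgeq\circ R'\subseteq\cgeq\circ R_\Box=R_\Box$ already follows from (iv), it amounts to the equality $\cgeq\circ R'=R_\Box$. My first instinct is to copy the Fischer--Servi computation of Proposition~\ref{FSP2}, but that argument does \emph{not} transfer: there one has $R'\subseteq\cgeq\circ R$ and exploits (FS2$'$) in the shape $\cgeq\circ R\circ\cgeq=\cgeq\circ R$, whereas here $R'\subseteq R\circ\cgeq$ and one only has (CJ2$'$) $\cgeq\circ R\circ\cgeq=R\circ\cgeq$, so the composition is pushed the wrong way. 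Thus the formal relation algebra is not enough, and I would attack $R_\Box\subseteq\cgeq\circ R'$ pointwise, using that in a forest the downset of each point is a chain: given $y\in R_\Box(x)={\downarrow}R(x)$, pick $u\in R(x)$ with $y\le u$ and walk down the chain $\{w : w\le x\}$ looking for a witness $w$ with $y\in{\downarrow}R(w)\cap{\uparrow}R(w)=R'(w)$, combining (CJ1) (which sends $u$ down into ${\uparrow}R(w)$ for each $w\le x$) with the comparability of $y$ and those elements inside the chain ${\downarrow}u$. Here I would be especially careful, because by Theorem~\ref{CJteo}(iii) the equality $\cgeq\circ R'=R_\Box$ is exactly the frame condition corresponding to the represented algebra validating (FS2), a property not guaranteed by the Dunn axioms that a \emph{CJ}-forest frame encodes; pinning down whether the witness search can really be carried out in full generality is where I expect essentially all of the effort to lie.
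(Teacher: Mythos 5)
Your handling of items (i), (ii), (iv), (v) and (vi) is correct and coincides, up to reordering, with the paper's own proof: the paper likewise reads (iv) off Lemma \ref{CJcond}, gets (ii) by composing with $\cgeq$ (resp.\ $\cleq$) and absorbing $\cgeq\circ\cgeq=\cgeq$, proves (i) by the same absorption computations (directly from (CJ1') and (CJ2') rather than via your (ii)$+$(iv) route), and settles (v) and (vi) exactly as you do.

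Where you refuse to follow the statement is item (iii), and your scepticism is vindicated: (iii) is \emph{false}, so the witness search you outline cannot be completed. Take the forest $X=\{p,q,r\}$ with $q<r$ and $p$ comparable only to itself (the forest of Example \ref{example1}), and let $R=\{(p,r)\}$. Then $\cleq\circ R=\{(p,r)\}=R\circ\cleq$ and $\cgeq\circ R=\{(p,r)\}\subseteq\{(p,r),(p,q)\}=R\circ\cgeq$, so $(X,R)$ is a CJ-forest frame (in fact even a W-forest frame), and $R'=(R\circ\cgeq)\cap(R\circ\cleq)=\{(p,r)\}$. But $R'\circ\cgeq=\{(p,r),(p,q)\}$, while $\cgeq\circ R'=\{(p,r)\}$ because the only point below $p$ is $p$ itself; hence $(R'\circ\cgeq)\not\subseteq(\cgeq\circ R')$ and (FS2) fails for $R'$. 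This is precisely the obstruction you diagnosed: by your reduction, (FS2) for $R'$ amounts to the equality $R_\Box=\cgeq\circ R'$, which by Theorem \ref{keyD1}(iii) holds iff the generated algebra $({\bf G(X)},\beta_R,\delta_R)$ satisfies the algebraic axiom (FS2); a CJ-forest frame only guarantees a Dunn GAO, and the paper itself shows in Section \ref{extensions} (Figure \ref{figDnotFS}) that Dunn GAOs need not satisfy (FS2). In the example above one checks directly that $\delta_R(\{q\})\to\beta_R(\emptyset)=X\not\subseteq\{q,r\}=\beta_R(\{q\}\to\emptyset)$.

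You should also know that the paper's own argument for (iii) is not a proof: it ends with the tautology $(R'\circ\cgeq)\subseteq(R'\circ\cgeq)$ and never produces the required inclusion into $\cgeq\circ R'$. Nothing essential downstream is lost by deleting (iii): the only use the paper makes of it, namely that $F'=({\bf X},R')$ is a basic frame in the sense of Definition \ref{basic}, already follows from (iv) and (v), which exhibit $R'$ as the intersection of a relation satisfying (M) with one satisfying (A).
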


\begin{proof} \begin{itemize}

\item[(i)] As for (CJ1), we have  $(\cleq \circ R') = (\cleq \circ ( (R \circ \cgeq) \cap (R \circ \cleq)))  \subseteq (\cleq \circ R \circ \cleq) = (R \circ \cleq) \subseteq  (R' \circ \cleq)$. And as for (CJ2) we have $(\cgeq \circ R') = (\cgeq \circ ( (R \circ \cgeq) \cap (R \circ \cleq))) \subseteq  (\cgeq \circ R \circ \cgeq) = (R \circ \cgeq) \subseteq  (R' \circ \cgeq)$. 

\item[(ii)] The inclusions $\supseteq$'s are direct, let us prove the inclusions $\subseteq$'s. We have: $(R' \circ \cgeq) =  (((R \circ \cgeq) \cap (R \circ \cleq)) \circ \cgeq) \subseteq  ((R \circ \cgeq) \circ \cgeq) = (R \circ \cgeq)$, and similarly $(R' \circ \cleq) =  (((R \circ \cgeq) \cap (R \circ \cleq)) \circ \cleq) \subseteq  ((R \circ \cleq) \circ \cleq) = (R \circ \cleq)$. 

\item[(iii)] By (ii), $(R' \circ \cgeq) = (R \circ \cgeq)$, but by (and since $R  \subseteq R'$ then we have $(R \circ \cgeq) \subseteq (R'  \circ \cgeq)$, and hence 
$(R' \circ \cgeq)  \subseteq (R'  \circ \cgeq)$. 

\item[(iv)] These are exactly properties (CJ2') and (CJ1'), respectively.

\item[(v)] It directly follows from the definition of $R'$ and (ii). 

\item[(vi)] It directly follows from (ii) and the fact that $\beta_R = \beta_{R_\Box}$ and $\delta_R = \delta_{R_\Diamond}. $
\end{itemize}
\end{proof}

Observe that properties (i) and (iii) above tells us respectively that the frame $F' = ({\bf X}, R')$ is both a CJ-frame, and a basic frame in the sense of Definition \ref{basic}. Moreover, by property (vi), it generates the same Dunn-GAO than the original frame $F = ({\bf X}, R)$.


\begin{corollary} For every CJ-forest frame $F=({\bf X}, R)$, $F' = ({\bf X}, R')$ is a CJ-forest frame that is equivalent to $F = ({\bf X}, R)$, i.e. ${\bf G}(F) ={\bf G}(F')$. 
\end{corollary}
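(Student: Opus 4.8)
The plan is to obtain both parts of the statement directly from Proposition \ref{CJP2}, which already contains all the substantive work; the corollary is in effect a repackaging of its items (i) and (vi). I would split the argument into the two claims contained in the statement: that $F' = ({\bf X}, R')$ is again a CJ-forest frame, and that it is equivalent to $F$ in the sense that ${\bf G}(F) = {\bf G}(F')$.

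For the first claim, I would begin by noting that $F'$ is built on the \emph{same} forest $\bf X$ as $F$, so the forest condition in the definition of a CJ-forest frame is inherited for free; the only thing left to check is that the new relation $R'$ satisfies (CJ1) and (CJ2). But this is precisely part (i) of Proposition \ref{CJP2}, so no further computation is needed.

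For the second claim, I would unfold the definition of the generated algebra. Recall that ${\bf G}(F) = ({\bf G(X)}, \beta_R, \delta_R)$, so that by the same recipe applied to $R'$ one has ${\bf G}(F') = ({\bf G(X)}, \beta_{R'}, \delta_{R'})$. Since both algebras share the underlying G\"odel algebra ${\bf G(X)}$, establishing ${\bf G}(F) = {\bf G}(F')$ amounts exactly to showing $\beta_{R'} = \beta_R$ and $\delta_{R'} = \delta_R$, which is part (vi) of Proposition \ref{CJP2}.

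There is essentially no obstacle here, since every ingredient has been front-loaded into Proposition \ref{CJP2}; the only point requiring a little care is to make explicit that ``$F'$ equivalent to $F$'' is, by the convention fixed just before Proposition \ref{CJP2}, literally the identity ${\bf G}(F) = {\bf G}(F')$ of generated algebras rather than a mere isomorphism. This is exactly what items (i) and (vi) deliver together.
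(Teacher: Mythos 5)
Your proposal is correct and matches the paper's own justification: the paper likewise derives the corollary by citing item (i) of Proposition \ref{CJP2} for the claim that $F'$ is a CJ-forest frame and item (vi) for the identity $\beta_{R'}=\beta_R$, $\delta_{R'}=\delta_R$, which gives ${\bf G}(F)={\bf G}(F')$ on the common underlying algebra ${\bf G(X)}$. Your added remark that equivalence means literal equality of the generated algebras (not mere isomorphism) is also exactly the reading intended in the paper.
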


Finally, similarly to the case of FS-frames, starting from the algebra ${\bf G}(F)$, one can consider its associated relational frame $({\bf F_{G(X)}}, R_{G(F)})$. However, next proposition shows that, unlike the case of FS-frames, in general we do not recover the initial frame $F$, but the modified frame $F'$.

\begin{proposition}  \label{CJP1}
For every CJ-forest frame $F = ({\bf X}, R)$ and its associated frame $F' = ({\bf X}, R')$, 
$({\bf F_{G(X)}}, R_{G(F)})$ is a CJ-forest frame and, moreover, $F' = ({\bf X}, R') \cong ({\bf F_{G(X)}}, R_{G(F)})$.
\end{proposition}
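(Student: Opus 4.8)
The plan is to factor the single relation $R$ through the two auxiliary relations $R_\Box = R \circ \geq$ and $R_\Diamond = R \circ \leq$, and then read off the claim from the two-relation frame isomorphism of Corollary \ref{frameiso}. First I would verify that $({\bf X}, R_\Box, R_\Diamond)$ is a forest frame meeting the hypotheses of that corollary. Conditions $(M)$ for $R_\Box$ and $(A)$ for $R_\Diamond$ amount to $\geq \circ R_\Box = R_\Box$ and $\leq \circ R_\Diamond = R_\Diamond$, which are exactly the identities (CJ2$'$) and (CJ1$'$) established in Proposition \ref{CJP2}(iv); and the extra hypotheses $R_\Box \circ \geq = R_\Box$, $R_\Diamond \circ \leq = R_\Diamond$ hold because $R \circ \geq \circ \geq = R \circ \geq$ and $R \circ \leq \circ \leq = R \circ \leq$, using that $\geq$ and $\leq$ are reflexive and transitive.

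Next I would identify the algebra involved. By the Celani--Jansana identities recalled just before Proposition \ref{CJP2}, $\beta_R = \beta_{R_\Box}$ and $\delta_R = \delta_{R_\Diamond}$, so the GAO induced by the forest frame $({\bf X}, R_\Box, R_\Diamond)$ is precisely ${\bf G}(F) = ({\bf G(X)}, \beta_R, \delta_R)$. Hence the relations on the prime filters of ${\bf G(X)}$ induced by $\beta_R$ and $\delta_R$ are exactly $R^{\Box}_{G(F)}$ and $R^{\Diamond}_{G(F)}$, and Corollary \ref{frameiso} applies verbatim to give that $k$ is a frame isomorphism $({\bf X}, R_\Box, R_\Diamond) \cong ({\bf F_{G(X)}}, R^{\Box}_{G(F)}, R^{\Diamond}_{G(F)})$; that is, for all $x,y \in X$, $x R_\Box y \iff k(x) R^{\Box}_{G(F)} k(y)$ and $x R_\Diamond y \iff k(x) R^{\Diamond}_{G(F)} k(y)$.

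The single-relation isomorphism then follows by intersecting the two equivalences. Since the associated frame is built from $R_{G(F)} = R^{\Box}_{G(F)} \cap R^{\Diamond}_{G(F)}$, while $R' = (R \circ \geq) \cap (R \circ \leq) = R_\Box \cap R_\Diamond$, the two displayed equivalences combine to give $x R' y \iff k(x) R_{G(F)} k(y)$; as $k$ is already a forest isomorphism by Lemma \ref{bijective}, this establishes $F' = ({\bf X}, R') \cong ({\bf F_{G(X)}}, R_{G(F)})$. Note that this is exactly where $R'$ rather than $R$ appears: intersecting the two recovered relations returns $R_\Box \cap R_\Diamond = R'$, not $R$. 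To finish, I would check that $({\bf F_{G(X)}}, R_{G(F)})$ is a CJ-forest frame by transporting (CJ1) and (CJ2) along this isomorphism: ${\bf F_{G(X)}}$ is a forest (being the prime-filter forest of a G\"odel algebra), $R'$ satisfies (CJ1) and (CJ2) by Proposition \ref{CJP2}(i), and since these conditions are stated purely in terms of the order and relational composition they are preserved by the order-and-relation isomorphism $k$.

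I expect the only delicate point to be the bookkeeping in the second paragraph: one must be sure that the relations $R^{\Box}_{G(F)}, R^{\Diamond}_{G(F)}$ produced by Corollary \ref{frameiso} (which are read off from the modal operators of ${\bf G}(F)$) genuinely coincide with the components used to define $R_{G(F)}$. Once the operator identities $\beta_{R_\Box} = \beta_R$ and $\delta_{R_\Diamond} = \delta_R$ are invoked, this coincidence is immediate, and no computation of real substance remains, the hard work having been front-loaded into Corollary \ref{frameiso} and Proposition \ref{CJP2}.
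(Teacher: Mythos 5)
Your proof is correct and takes essentially the same route as the paper's: the same decomposition $R_\Box = R \circ \geq$, $R_\Diamond = R \circ \leq$, the same identification $\beta_R = \beta_{R_\Box}$, $\delta_R = \delta_{R_\Diamond}$, the appeal to Lemma \ref{key} (in its packaged form, Corollary \ref{frameiso}), and the intersection of the two resulting equivalences to get $x R' y$ iff $k(x)\, R_{G(F)}\, k(y)$. The only minor divergences are that you verify the forest-frame hypotheses $(M)$ and $(A)$ for $R_\Box, R_\Diamond$ explicitly via Proposition \ref{CJP2}(iv) (the paper leaves this implicit when invoking Lemma \ref{key}), and that you obtain the CJ-frame property of $({\bf F_{G(X)}}, R_{G(F)})$ by transporting (CJ1) and (CJ2) along the isomorphism, whereas the paper cites \cite[Lemma 2.1]{CelJan}; both refinements are sound and make your version slightly more self-contained.
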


\begin{proof} That $({\bf F_{G(X)}}, R_{G(F)})$ is a CJ-forest frame directly follows from \cite[Lemma 2.1]{CelJan}. By Lemma \ref{bijective}, the mapping $k : X \to F_{G(X)}$, defined as $k(x) = \{f \in G(X) \mid x \in f\}$ for any $x \in X$, is a bijection. Thus, we are left to prove that $x, y \in X$, $xR'y$ iff $k(x) R_{G(F)} k(y)$. But, by defining as above $R_\Box = R \circ \cgeq$ and $R_\Diamond \circ \cleq$, it turns out that $R_\Box = R_\Box \circ \cgeq$ and $R_\Diamond = R_\Diamond \circ \cleq$, and hence we can apply Lemma \ref{key} and get, for all $x, y \in X$: \vspace{0.1cm}

(i) $x R_\Box y$ iff $k(x) R_\Box^{G(F)} k(y)$ \vspace{0.1cm}

(ii) $x R_\Diamond y$ iff $k(x) R_\Diamond^{G(F)} k(y)$ \vspace{0.1cm}

\noindent Now, by definition of $R'$, $xR'y$ iff $x R_\Box y$ and $x R_\Diamond y$, and by (i) and (ii), this holds iff $k(x) R_\Box^{G(F)} k(y)$ and $k(x) R_\Diamond^{G(F)} k(y)$, but this is just $k(x) R_{G(F)} k(y)$.      
\end{proof}

\subsection{Forest frames for GAOs satisfying both Dunn and Fischer Servi axioms} \label{DGAO}

In this section we consider the class of frames that are both CJ- and FS-forest frames. 

\begin{definition} A relational frame $F = ({\bf X}, R)$ is a {\em FSD-forest frame} provided that $\bf X$ is a forest and the binary relation $R \subseteq X \times X$ satisfies the following two conditions:  

(FS1) $(\cleq \circ R) \subseteq (R \circ \cleq)$

(FS2) $(R \circ \cgeq) \subseteq (\cgeq \circ R)$

(CJ2) $(\cgeq \circ R) \subseteq (R \circ \cgeq)$.

\end{definition}

By definition, it is clear that $F = ({\bf X}, R)$ is a FSD-forest frame iff $F$ is both a CJ-forest frame and a FS-forest frame. 

It is easy to check that requiring the above  three conditions is equivalent to require the following two conditions:
\begin{itemize}
\item[] (FS1') $(R \circ \cleq) = (\cleq \circ R \circ \cleq)$

(FSCJ2) $(R \circ \cgeq) = (\cgeq \circ R)$
\end{itemize}
It is clear that (FS1') is a simple reformulation of (FS1), which is commonly satisfied by both CJ- and FS-forest frames, while (FSCJ2) is obtained by combining (FS2) and (CJ2).

In this case, notice that if $F = ({\bf X}, R)$ is a FSD-forest frame, then $\beta_R = \beta_{\cgeq \circ R}$, and thus the G\"odel modal algebra
$${\bf G}(F) = ({\bf G(X)}, \beta_R, \delta_R)$$
is both a FS-GAO and a Dunn-GAO. 
Note that, as in the case of CJ-forest frames, the operators $\beta_R$ and $\delta_R$ in ${\bf G}(F)$ keep being closed on the set of Boolean elements of ${\bf G(X)}$. 

Similarly to the previous cases, now we have the following two propositions that can be easily proved by  combining respectively Props \ref{CJP1} and \ref{CJP2} on the one hand and Props. \ref{FSP1} and \ref{FSP2} on the other. 

\begin{proposition} Let  $F=({\bf X}, R)$ be a FSD-forest frame and let $({\bf F_{G(X)}}, R_{G(F)})$ be the  frame such that ${\bf F_{G(X)}}$ is the forest of prime filters of  ${\bf G}(F)$ and $R_{G(F)}=R_{G(F)}^\Box\cap R_{G(F)}^\Diamond$. Then 
$({\bf F_{G(X)}}, R_{G(F)})$ is a FSD-forest frame and, moreover, $F = ({\bf X}, R) \cong ({\bf F_{G(X)}}, R_{G(F)})$.
\end{proposition}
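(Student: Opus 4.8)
The plan is to reduce the statement to the two cases already settled for CJ- and FS-forest frames, exploiting that, by definition, a FSD-forest frame is simultaneously a CJ-forest frame and an FS-forest frame, so that both Proposition \ref{FSP1} and Proposition \ref{CJP1} apply to $F$. The single genuinely new ingredient is the identity recorded just before the statement, namely that for a FSD-forest frame one has $\beta_R = \beta_{\cgeq \circ R}$. First I would spell out what this buys us: it forces the two candidate algebras attached to $F$ to coincide, since
\[
{\bf G}(F) = ({\bf G(X)}, \beta_R, \delta_R) = ({\bf G(X)}, \beta_{\cgeq \circ R}, \delta_R) = {\bf G}'(F).
\]
Because the relation $R_{G(F)} = R^\Box_{G(F)} \cap R^\Diamond_{G(F)}$ is defined purely from the algebra attached to $F$, the equality ${\bf G}(F) = {\bf G}'(F)$ immediately yields $R_{G(F)} = R_{G'(F)}$, together with the matching of their $\Box$- and $\Diamond$-components.

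With this identification in hand, the first assertion is obtained by intersecting the two earlier structural results. Applying Proposition \ref{FSP1} to $F$ (legitimate, as $F$ is an FS-forest frame) shows that $({\bf F_{G(X)}}, R_{G'(F)})$ is an FS-forest frame, while applying Proposition \ref{CJP1} to $F$ (legitimate, as $F$ is a CJ-forest frame) shows that $({\bf F_{G(X)}}, R_{G(F)})$ is a CJ-forest frame. Since $R_{G(F)} = R_{G'(F)}$, the reconstructed frame is at once a CJ- and an FS-forest frame, hence a FSD-forest frame by definition.

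For the isomorphism I would rely on the FS side only, which is the one that reconstructs the original relation rather than a modified one. Proposition \ref{FSP1} gives $F = ({\bf X}, R) \cong ({\bf F_{G(X)}}, R_{G'(F)})$ through the bijection $k$ of Lemma \ref{bijective}, and rewriting $R_{G'(F)} = R_{G(F)}$ turns this into the desired $F \cong ({\bf F_{G(X)}}, R_{G(F)})$.

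The main point to be careful about — and the reason one cannot simply quote Proposition \ref{CJP1} for the isomorphism — is that the CJ-reconstruction recovers only the modified frame $F' = ({\bf X}, R')$ with $R' = (R \circ \cgeq) \cap (R \circ \cleq)$, whereas the FS-reconstruction recovers $F$ itself; the whole argument therefore hinges on using Proposition \ref{FSP1} for the isomorphism and on the identity ${\bf G}(F) = {\bf G}'(F)$ to identify the two reconstructed relations. As a byproduct, combining the isomorphisms coming from Propositions \ref{FSP1} and \ref{CJP1}, both realized by the same $k$, shows that $R = R'$ for FSD-forest frames, but this stronger fact is not needed for the statement.
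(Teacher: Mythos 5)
Your argument is precisely the proof the paper intends: the text preceding the statement says it "can be easily proved by combining" Propositions \ref{CJP1} and \ref{FSP1}, and you fill this sketch in correctly, supplying the two points the paper leaves tacit, namely that $\beta_R=\beta_{\geq\circ R}$ forces ${\bf G}(F)={\bf G}'(F)$ (hence $R_{G(F)}=R_{G'(F)}$), and that the isomorphism must come from the FS side, since Proposition \ref{CJP1} only recovers the modified frame $F'$.

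However, your closing \emph{byproduct} --- that $R=R'$ for every FSD-forest frame --- is false, and its falsity is not harmless: it shows that Propositions \ref{FSP1} and \ref{CJP1} cannot both hold as stated, so the step of your proof (shared with the paper's sketch) that invokes Proposition \ref{FSP1} genuinely fails. Concretely, let ${\bf X}$ be the chain $a<b<c$ and
\[
R=\{(a,a),(a,b),(b,a),(b,c),(c,c)\}.
\]
A direct check gives $(\leq\circ R)\subseteq(R\circ\leq)$ and $(R\circ\geq)=(\geq\circ R)$, so $({\bf X},R)$ is an FSD-forest frame, while $R'=(R\circ\geq)\cap(R\circ\leq)=R\cup\{(b,b)\}\neq R$. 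On the dual side, ${\bf G}(F)$ is the four-element chain $\emptyset<\{a\}<\{a,b\}<X$ with $\beta(\{a\})=\emptyset$, $\beta(\{a,b\})=\{a\}$, $\delta(\{a\})=\delta(\{a,b\})=\{a,b\}$, and one computes $\beta^{-1}(k(b))=\{X\}\subseteq k(b)$ and $k(b)\subseteq\delta^{-1}(k(b))$, i.e.\ $k(b)\,R_{G(F)}\,k(b)$ although $(b,b)\notin R$; in fact the reconstructed relation corresponds under $k$ exactly to $R'$, just as Proposition \ref{CJP1} predicts. Since $k$ is the only order-isomorphism between these two three-element chains, $F\not\cong({\bf F_{G(X)}},R_{G(F)})$, so the proposition itself fails for this frame. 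The culprit is Proposition \ref{FSP1}: its conclusion requires the stability condition $R=(\geq\circ R)\cap(R\circ\leq)$ (exactly the property that $R'$ enjoys by Proposition \ref{FSP2}(iii)), which the paper's conditions (FS1)--(FS2) do not imply; whatever the precise hypotheses of the lemma of Or{\l}owska and Rewitzky cited in its proof, they must go beyond (FS1)--(FS2), since the example above satisfies both and violates the conclusion. So your derivation is formally faithful to the paper, but it (and the statement) is only valid under the additional hypothesis $R=(\geq\circ R)\cap(R\circ\leq)$, and the contradiction you dismissed as an unneeded byproduct is precisely the symptom of this gap.
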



\begin{proposition} Let $F = ({\bf X}, R)$ be a FSD-forest frame, and define a new relation 
$R' = (\cgeq \circ R ) \cap (R \circ \cleq)  =  (R \circ \cgeq ) \cap (R \circ \cleq)$. Then: 
\begin{itemize}
\item[(i)]  $R'$ satisfies (FS1), (FS2) and (CJ2)
\item[(ii)]  $(\cgeq \circ R') = (\cgeq \circ R)$,  $(R' \circ \cleq) = (R \circ \cleq)$
\item[(iii)]   $R' = (\cgeq \circ R' ) \cap (R' \circ \cleq) =  (R' \circ \cgeq ) \cap (R' \circ \cleq) $
\item[(iv)]  $\beta_{\geq \circ R'} = \beta_{\geq \circ R} = \beta_R$, $\delta_{R'} = \delta_{R}$
\end{itemize}
\end{proposition}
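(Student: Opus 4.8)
The plan is to exploit the fact that an FSD-forest frame is, by definition, simultaneously a CJ-forest frame and an FS-forest frame, so that the two auxiliary Propositions \ref{CJP2} and \ref{FSP2} both apply to $F$. The first point to settle, and the one genuine subtlety of the whole argument, is that the relation $R'$ defined in the statement coincides with the relation called $R'$ in each of those two propositions. Indeed, combining (FS2) and (CJ2) yields (FSCJ2), i.e.\ $\geq \circ R = R \circ \geq$; hence $(\geq \circ R)\cap(R \circ \leq) = (R \circ \geq)\cap(R \circ \leq)$, which is exactly the double description of $R'$ given in the statement. In particular the $R'$ of Proposition \ref{FSP2} (built from $\geq \circ R$) and the $R'$ of Proposition \ref{CJP2} (built from $R \circ \geq$) are literally the same relation, so I may quote results about either of them for our $R'$.

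Once this identification is made, each clause follows by citing the appropriate earlier item. For (i), Proposition \ref{FSP2}(i) gives that $R'$ satisfies (FS1) and (FS2), while Proposition \ref{CJP2}(i) gives that $R'$ satisfies (CJ1) and (CJ2); selecting (FS1), (FS2) and (CJ2) yields the claim. Clause (ii) is verbatim Proposition \ref{FSP2}(ii). For (iii), the first equality $R' = (\geq \circ R') \cap (R' \circ \leq)$ is Proposition \ref{FSP2}(iii) and the second equality $R' = (R' \circ \geq) \cap (R' \circ \leq)$ is Proposition \ref{CJP2}(v); the two expressions are mutually consistent because, by clause (i), $R'$ itself satisfies (FS2) and (CJ2), so $\geq \circ R' = R' \circ \geq$. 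Finally, for (iv), Proposition \ref{FSP2}(iv) provides $\beta_{\geq \circ R'} = \beta_{\geq \circ R}$ and $\delta_{R'} = \delta_R$, and the remaining equality $\beta_{\geq \circ R} = \beta_R$ is exactly the FSD-frame identity recalled at the start of this subsection.

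The argument is thus essentially bookkeeping: no new computation is required beyond the verification $\geq \circ R = R \circ \geq$. I expect the only place demanding care to be precisely this coincidence of the two $R'$'s together with the internal consistency check in (iii); everything else is a direct transcription of the two already-proved propositions. A cleaner alternative, if one prefers self-containment, would be to prove the four clauses directly by the same relation-algebra manipulations used in Propositions \ref{CJP2} and \ref{FSP2} (repeatedly using $\geq \circ \geq = \geq$, $\leq \circ \leq = \leq$ and monotonicity of composition with respect to inclusion), but invoking the prior propositions is shorter and is exactly what the surrounding text suggests.
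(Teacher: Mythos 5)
Your proposal is correct and follows essentially the same route as the paper: the paper's proof likewise reduces everything to Propositions \ref{CJP2} and \ref{FSP2} after observing that (FS2) and (CJ2) combine to give $(\cgeq \circ R) = (R \circ \cgeq)$, so that the two candidate definitions of $R'$ coincide and all items of both earlier propositions transfer to the FSD setting. Your write-up is in fact somewhat more explicit than the paper's (which simply asserts that all the properties become valid), but the underlying argument is identical.
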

\begin{proof}
The proof of this proposition follows straightforwardly from Props. \ref{CJP2} and \ref{FSP2} by noticing that the relations $R'$ defined in those propositions coincide in a FSD-forest frame, i.e. if $R$ satisfies (FSJ2)  then $(\cgeq \circ R ) = (R \circ \cgeq )$ and hence, $ (\cgeq \circ R ) \cap (R \circ \cleq)  =  (R \circ \cgeq ) \cap (R \circ \cleq)$ and all the properties in Props.  \ref{FSP2}  and \ref{CJP2} are valid in a a FSD-forest frame. 
\end{proof}

\begin{corollary}  For every FSD-forest frame $F = ({\bf X}, R)$, let $R' = (\cgeq \circ R ) \cap (R \circ \cleq)$. Then $F' = ({\bf X}, R')$ is a FSD-forest frame that is equivalent to $F = ({\bf X}, R)$, i.e. ${\bf G}'(F') ={\bf G}(F)$. 
\end{corollary}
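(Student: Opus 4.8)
The plan is to read the corollary off directly from the preceding Proposition, which already contains all the substantive work concerning the relation $R' = (\cgeq \circ R ) \cap (R \circ \cleq)$; the corollary itself is just a repackaging of parts (i) and (iv).

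First I would check that $F' = ({\bf X}, R')$ is again an FSD-forest frame. By definition this amounts to verifying that $R'$ satisfies (FS1), (FS2) and (CJ2), but these three conditions are exactly what part (i) of the Proposition asserts, so no additional argument is required. Next I would establish the equivalence ${\bf G}'(F') = {\bf G}(F)$. Recall that for an FSD-forest frame the associated algebra is ${\bf G}(F) = ({\bf G(X)}, \beta_R, \delta_R)$, and that in this setting $\beta_R = \beta_{\cgeq \circ R}$, as was observed above when the algebra ${\bf G}(F)$ was introduced. On the other hand ${\bf G}'(F')$ is formed in the FS-style, namely ${\bf G}'(F') = ({\bf G(X)}, \beta_{\cgeq \circ R'}, \delta_{R'})$. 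Part (iv) of the Proposition yields precisely $\beta_{\geq \circ R'} = \beta_R$ and $\delta_{R'} = \delta_R$, so the two triples of data coincide and hence ${\bf G}'(F') = ({\bf G(X)}, \beta_R, \delta_R) = {\bf G}(F)$.

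The only point that genuinely needs care — and thus the closest thing to an obstacle — is keeping the two notational conventions apart: the algebra ${\bf G}'$ is built from the box operator induced by the composed relation $\cgeq \circ R'$, whereas ${\bf G}$ uses the operator induced directly by $R$. The reconciling identity is $\beta_R = \beta_{\cgeq \circ R}$, which holds exactly because $F$ is an FSD-forest frame (so that $R$ satisfies (FSCJ2), giving $\cgeq \circ R = R \circ \cgeq$). Once this bookkeeping is made explicit, the corollary follows with no further computation.
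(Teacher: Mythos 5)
Your proposal is correct and follows exactly the route the paper intends: the frame part of the corollary is part (i) of the preceding proposition, and the algebra identity ${\bf G}'(F')={\bf G}(F)$ is part (iv), with the notational reconciliation $\beta_R=\beta_{\cgeq\circ R}$ (valid since (FS2) and (CJ2) combine into (FSCJ2)) being precisely the observation the paper makes when defining ${\bf G}(F)$ for FSD-forest frames. Nothing is missing.
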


\subsection{Forest frames with one relation satisfying (A) and (M)} \label{MDGAO} 

In this section we finally consider forest frames with a single relation satisfying both monotonicity properties (A) and (M) on the first variable. As we will show, this class of frames determine a proper subvariety of 
 $\mathbb{DGAO}$.

\begin{definition} A relational frame $F = ({\bf X}, R)$ is a W-forest frame provided that $\bf X$ is a forest and the binary relation $R \subseteq X \times X$ satisfies the following two conditions:  

(W1): $(\cleq \circ R) \subseteq R $

(W2): $(\cgeq \circ R) \subseteq R$

\end{definition}

\noindent It is easy to check that conditions (W1) and (W2) are equivalent to the compound condition

 \begin{itemize}

\item[(W):] $(\cleq \circ R) = R = (\cgeq \circ R)$
\end{itemize}
Moreover, in a W-forest frame we further have all the following relations:
$$ R = (\cleq \circ R) = (\cgeq \circ R) \subseteq \left \{ 
\begin{array}{l}
\subseteq (R \circ \cleq) = (\cleq \circ R \circ \cleq) = (\cgeq \circ R \circ \cleq)  \vspace{0.2cm} \\
\subseteq (R \circ \cgeq) = (\cleq \circ R \circ \cgeq) = (\cgeq \circ R \circ \cgeq) \\
\end{array} 
\right . $$
Also note the following.

\begin{remark}\label{remCJW}
\begin{itemize}

\item[(i)] W-forest frames are CJ-forest frames, since (W1) and (W2) imply (CJ1) and (CJ2) respectively, but the converse is not true. 



\item[(ii)]  W-forest frames are not FS-forest frames in general, but if $F = ({\bf X}, R)$ is a W-forest frame, then $F' = ({\bf X}, R')$, where $R' = R \circ \cgeq$, is both a W-forest frame and a FS-forest frame. 

Indeed, we have: 

- (W1) $\cleq \circ R' = \cleq \circ R \circ \cgeq = R \circ \cgeq = R'$,

- (W2) $\cgeq \circ R' = \cgeq \circ R \circ \cgeq = R \circ \cgeq = R'$,

- (FS2) $R' \circ \cgeq = R  \circ \cgeq  \circ \cgeq = R  \circ \cgeq = R' \subseteq \cgeq \circ R'$.

\item[(iii)] From (i) and (ii) it follows that if $F = ({\bf X}, R)$ is a W-forest frame, since $\cgeq\circ R' = R'$, then $ \beta_{R'}$ and $\delta_{R'}$ are closed on $Down(X)$ and 
$${\bf G}(F') = ({\bf G(X)}, \beta_{R'}, \delta_{R'})$$
is both a Dunn-GAO and a FS-GAO. Notice that, in general ${\bf G}(F')$ is not isomorphic to the GAO ${\bf G}(F)$, for otherwise, every algebra defined by a W-forest frame would belong to $\mathbb{FSGAO}$ and this is not the case because of next Proposition \ref{propFinal} (iv). 

\end{itemize} 
\end{remark}
Next, we axiomatise the subvariety of Dunn-GAOs whose associated frames are W-forest frames. 

\begin{definition}\label{defWGAO} An algebra $({\bf A}, \Box, \Diamond)$ is a $W$-GAO if it is a D-GAO that satisfies the following two equations:
\begin{itemize}
\item[(BB)] $\Box(x) \lor \neg \Box(x) = 1$,
\item[(DB)] $\Diamond(x) \lor \neg \Diamond(x) = 1$.
\end{itemize}
\end{definition}
The class of $W$-GAOs is a variety, denoted $\mathbb{WGAO}$, that is a subvariety of  $\mathbb{DGAO}$. 

\begin{theorem} (1) Let $F = ({\bf X}, R)$ be a W-forest frame. Then ${\bf G}(F) = ({\bf G(X)}, \beta_R, \delta_R)$ is a $W$-GAO. 

(2) Let  $({\bf A}, \Box, \Diamond)$ be a $W$-GAO. Then $(F_{\bf A}, R_{\bf A})$ is a W-forest frame. 
\end{theorem}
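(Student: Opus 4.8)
The plan is to prove the two directions separately, using in both the following description of Boolean elements. In the G\"odel algebra ${\bf G(X)}$ of downsets of a forest $\bf X$ one has $\neg b = X\setminus{\uparrow}b$, so a downset $b$ satisfies $b\vee\neg b = X$ exactly when ${\uparrow}b\subseteq b$, that is, precisely when $b$ is simultaneously down- and up-closed. Hence, for ${\bf G}(F)$, conditions (BB) and (DB) amount to requiring that $\beta_R(a)$ and $\delta_R(a)$ be both downsets and upsets for every $a\in G(X)$.

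For part (1) I would first invoke Remark \ref{remCJW}(i): a W-forest frame is in particular a CJ-forest frame, so ${\bf G}(F)=({\bf G(X)},\beta_R,\delta_R)$ is already a Dunn-GAO by the results of the CJ subsection. It then remains to verify (BB) and (DB). That $\beta_R(a)$ and $\delta_R(a)$ are downsets is the closure already observed for $\beta$ and $\delta$ in Section \ref{GAO} (using (M), equivalently (W2), and (A), equivalently (W1), respectively). For the missing closure I argue directly: if $y\in\beta_R(a)$ and $y\leq y'$, then for any $z$ with $y'Rz$ condition (W1) gives $yRz$, hence $z\in a$, so $y'\in\beta_R(a)$ and $\beta_R(a)$ is an upset; symmetrically, if $y\in\delta_R(a)$ with witness $z\in a$, $yRz$, and $y\leq y'$, then (W2) yields $y'Rz$, so $y'\in\delta_R(a)$. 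Thus $\beta_R(a)$ and $\delta_R(a)$ are Boolean, and ${\bf G}(F)$ is a W-GAO.

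For part (2) the key is a purely algebraic lemma on Boolean elements: if $b\in B({\bf A})$ and $g\subseteq f$ are prime filters, then $b\in f$ implies $b\in g$ (otherwise primeness forces $\neg b\in g\subseteq f$, whence $\bot=b\wedge\neg b\in f$, contradicting properness). Now let $({\bf A},\Box,\Diamond)$ be a W-GAO; it is a Dunn-GAO with all $\Box a,\Diamond a$ Boolean, and recall $fR_{\bf A}h$ iff $\Box^{-1}(f)\subseteq h\subseteq\Diamond^{-1}(f)$, while $f\leq g$ iff $f\supseteq g$ in $\bF_{\bf A}$. To check (M), equivalently (W2), assume $f\leq g$ (so $g\subseteq f$) and $fR_{\bf A}h$: then $\Box^{-1}(g)\subseteq\Box^{-1}(f)\subseteq h$, and for each $x\in h\subseteq\Diamond^{-1}(f)$ we have $\Diamond x\in f$ Boolean, so the lemma (applied to $g\subseteq f$) gives $\Diamond x\in g$, i.e. $x\in\Diamond^{-1}(g)$; hence $gR_{\bf A}h$. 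The check of (A), equivalently (W1), is dual: assuming $g\leq f$ (so $f\subseteq g$) and $fR_{\bf A}h$, one has $h\subseteq\Diamond^{-1}(f)\subseteq\Diamond^{-1}(g)$, while for $x\in\Box^{-1}(g)$ the element $\Box x\in g$ is Boolean and $f\subseteq g$, so the lemma yields $\Box x\in f$, i.e. $x\in\Box^{-1}(f)\subseteq h$. Thus $R_{\bf A}$ satisfies (W1) and (W2), and $(F_{\bf A},R_{\bf A})$ is a W-forest frame.

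The lattice identity for Boolean downsets and the monotonicity computations are routine; the conceptual heart, and the step I expect to need the most care, is part (2). There one must recognise that the order-theoretic conditions (W1)/(W2) on $R_{\bf A}$ are exactly the frame-side counterpart of the requirement that $\Box a$ and $\Diamond a$ be Boolean, with the downward-inheritance lemma as the bridge. Keeping the direction of the forest order ($f\leq g$ iff $f\supseteq g$) straight at each use, and pairing the correct half of $R_{\bf A}=R_\Box\cap R_\Diamond$ with (BB) versus (DB), is where a slip is easiest.
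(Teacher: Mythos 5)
Your proof is correct and takes essentially the same route as the paper's: in part (1) both arguments show that the W-conditions force $\beta_R(a)$ and $\delta_R(a)$ to be simultaneously down- and up-closed and hence Boolean, and in part (2) both rest on the same key lemma that a Boolean element belonging to one of two comparable prime filters must belong to the other (by primeness and properness). The only cosmetic differences are that you certify Booleanness directly from $\neg b = X\setminus{\uparrow}b$ where the paper invokes unions of maximal trees, and you make explicit the Dunn-GAO half of (1), which the paper leaves implicit in Remark \ref{remCJW}(i).
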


\begin{proof}
(1) Let $F = (X, R)$ a W-forest frame, that is, $R$ is such that $(\cleq \circ R) = R = (\cgeq \circ R)$. This means that if $xRy$ and either $z \leq x$ or $z \geq x$, then $zRy$ as well.  This implies that, for any $a \in G(X)$,  if $x\in \beta_R(a)$ (resp. $x \in \delta_R(a)$)  then $y \in \beta_R(a)$ (resp. $y \in \delta_R(a)$) for any $y \in X$ such that $y \leq x$ or $y \geq x$.  In other words, for any $a$, $\beta_R(a)$ and $\delta_R(a)$ are both a downset and an upset. Since any subset of a forest that is both downwards and upwards closed must be a union of a collection of maximal trees of the forest. But maximal trees correspond to joint-irreducible Boolean elements in the algebra 
$\bf G(F)$, and therefore, for any $a \in G(X)$, $\beta_R(a)$ and $\delta_R(a)$ must be Boolean elements of $\bf G(F)$. 

(2) Let $f, g \in F_A$ such that $f R_A g$, that is,  such that $\Box^{-1}(f) \subseteq g$ and $g \subseteq \Diamond^{-1}(f)$. We have to show that if $f' \in F_A$ is such that $f' \subseteq f$ or $f \subseteq f'$, then $f' R_A g$ as well. 

Suppose $f \subseteq f'$. Then clearly, $g \subseteq \Diamond^{-1}(f')$, so let us how that $\Box^{-1}(f') \subseteq g$ as well. By definition, $\Box^{-1}(f') = \{ y \in G(X) \mid \Box(y) \in f'\}$. But by assumption every such $\Box(y)$ is a Boolean element of the prime filter $f'$, and hence  $\Box(y) \in f$ as well. Indeed, by contradiction, suppose $\Box(y)\not \in f$. Then, since $f$ is prime, $\neg \Box(y) \in f$, and thus  $\neg \Box(y) \in f'$ as well, that is a contradiction with the fact that $\Box(y) \in f'$. Therefore, $\Box^{-1}(f') = \Box^{-1}(f) \subseteq G$ and thus  $f' R_A g$. 

The case  $f' \subseteq f'$ can be proved in a similar way. 
\end{proof}

\subsection{A final comparison}

It is now convenient to summarize how the relational frames and their associated classes of G\"odel algebras with operators relate each other.

First of all, notice that the two subvarieties $\mathbb{DGAO}$ and $\mathbb{FSGAO}$ have a non empty intersection as, for instance, the variety $\mathbb{BAO}$ of Boolean algebras with operators is a subvariety of their intersection $\mathbb{FSDGAO} = \mathbb{DGAO}\cap\mathbb{FSGAO}$. Moreover, $\mathbb{DGAO}$ and $\mathbb{FSGAO}$ can be distinguished as we showed in Section \ref{extensions}.

As for the variety $\mathbb{WGAO}$ that we introduced in the above Subsection \ref{MDGAO}, the next result is going to make clear how it relates with the aforementioned varieties as depicted in Figure \ref{figFinal}.

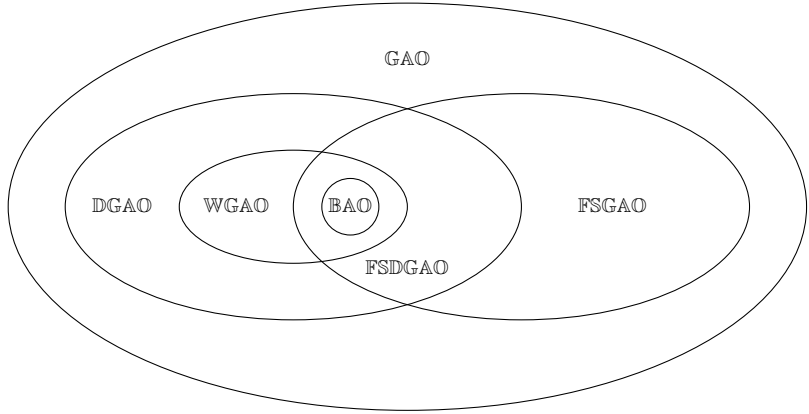
\begin{figure}[h]
\begin{center}
\begin{tikzpicture}[scale=1.5,every node/.style={scale=1.5}]
 \draw (0,0) ellipse (2cm and 1cm);
  \draw (0,0) ellipse (1cm and 0.5cm);
  \draw (2,0) ellipse (2cm and 1cm);
   \draw (0.5,0) ellipse (0.25cm and 0.25cm);
   \draw (1,0) ellipse (3.5cm and 1.8cm);
   \node[] at (2.8,0) {\tiny $\mathbb{FSGAO}$};
   \node[] at (1,-0.55) {\tiny $\mathbb{FSDGAO}$};
    \node[] at (0.5,0) {\tiny $\mathbb{BAO}$};
     \node[] at (-0.5,0) {\tiny $\mathbb{WGAO}$};
      \node[] at (-1.5,0) {\tiny $\mathbb{DGAO}$};
            \node[] at (1,1.3) {\tiny $\mathbb{GAO}$};
\end{tikzpicture}
\end{center}
\caption{A diagram explaining the inclusions among the subvarieties of $\mathbb{GAO}$ that we have studied in the present paper.}\label{figFinal}
\end{figure}

\begin{proposition}\label{propFinal}
The following properties hold:
\begin{itemize}
\item[(i)] $\mathbb{WGAO}\subsetneq \mathbb{DGAO}$,
\item[(ii)] $\mathbb{WGAO}\cap \mathbb{FSDGAO}\neq \emptyset$,
\item[(iii)] $\mathbb{WGAO}\cap \mathbb{FSDGAO}\subsetneq \mathbb{FSDGAO}$,
\item[(iv)] $\mathbb{WGAO}\setminus\mathbb{FSGAO}\neq\emptyset$,
\item[(v)] $\mathbb{BAO}\subsetneq (\mathbb{WGAO}\cap\mathbb{FSGAO})$. 
\end{itemize}
\end{proposition}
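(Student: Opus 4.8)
The five assertions split naturally into the inclusions, which follow almost directly from the definitions and the remark preceding the statement, and the separations, which I would witness by explicit small finite GAOs. For the inclusions: $\mathbb{WGAO}\subseteq\mathbb{DGAO}$ holds by Definition~\ref{defWGAO}, since a $W$-GAO is by definition a Dunn GAO. Next, as already observed just before the proposition, $\mathbb{BAO}\subseteq\mathbb{FSDGAO}$; moreover $\mathbb{BAO}\subseteq\mathbb{WGAO}$, because in a Boolean algebra every element $a$ satisfies $a\vee\neg a=\top$, so $(BB)$ and $(DB)$ hold trivially, and a BAO, being already a Dunn GAO, is therefore a $W$-GAO. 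Combining these gives $\mathbb{BAO}\subseteq\mathbb{WGAO}\cap\mathbb{FSDGAO}\subseteq\mathbb{WGAO}\cap\mathbb{FSGAO}$. Since the two-element Boolean algebra with $\Box=\Diamond=\mathrm{id}$ lies in $\mathbb{BAO}$, this already settles~(ii) and the inclusion $\subseteq$ in~(v).

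For the strictness in (i) and (iii) I would take the three-element G\"odel chain $C=\{\bot,m,\top\}$ with the identity operators $\Box=\Diamond=\mathrm{id}$. A direct check shows that $(D1)$, $(D2)$, $(FS1)$ and $(FS2)$ all hold with equality; for instance $\Box(a\vee b)=a\vee b\leq a\vee b=\Box a\vee\Diamond b$, and $\Diamond a\to\Box b=a\to b\leq a\to b=\Box(a\to b)$ (equivalently one invokes Proposition~\ref{prop:D2-FS2} to reduce $(FS1)$ to $(D2)$). Hence $(C,\mathrm{id},\mathrm{id})\in\mathbb{FSDGAO}\subseteq\mathbb{DGAO}$. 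However $\Box m=m$ is not Boolean, since $m\vee\neg m=m\vee\bot=m\neq\top$, so $(BB)$ fails and $(C,\mathrm{id},\mathrm{id})\notin\mathbb{WGAO}$. This proves $\mathbb{WGAO}\subsetneq\mathbb{DGAO}$ in (i) and $\mathbb{WGAO}\cap\mathbb{FSDGAO}\subsetneq\mathbb{FSDGAO}$ in (iii). For the strictness in (v), I would keep the same chain $C$ but take the constant operators $\Box\equiv\top$ and $\Diamond\equiv\bot$; these are immediately seen to satisfy $(\Box1),(\Box2),(\Diamond1),(\Diamond2)$, both Dunn equations, both Fischer Servi equations, and $(BB)$, $(DB)$, so this GAO lies in $\mathbb{WGAO}\cap\mathbb{FSGAO}$; yet its underlying lattice $C$ is not Boolean, whence it is not in $\mathbb{BAO}$, giving $\mathbb{BAO}\subsetneq\mathbb{WGAO}\cap\mathbb{FSGAO}$.

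It remains to prove (iv). Here I would reuse the GAO of Figure~\ref{figDnotFS} built on the free $1$-generated G\"odel algebra $\free_1$: the text already verifies that it satisfies $(D1)$ and $(D2)$ while failing $(FS2)$, so it lies in $\mathbb{DGAO}\setminus\mathbb{FSGAO}$. To place it in $\mathbb{WGAO}$ it suffices to check $(BB)$ and $(DB)$, i.e.\ that every value of $\Box$ and of $\Diamond$ is Boolean. Inspecting the definition, these values are exactly $\top,\bot,\neg x$ and $\neg\neg x$; and in any G\"odel algebra the weak excluded middle $\neg a\vee\neg\neg a=\top$ holds, so every negated element is Boolean (in particular $\neg x\vee\neg\neg x=\top$ exhibits both $\neg x$ and $\neg\neg x$ as Boolean). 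Hence $(BB)$ and $(DB)$ hold and the algebra belongs to $\mathbb{WGAO}\setminus\mathbb{FSGAO}$, settling~(iv).

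The computational content is concentrated in these axiom checks, and the only conceptually delicate point is the $W$-GAO membership in~(iv): the main obstacle is recognising that all outputs of the modal operators in that example are negations, which are automatically Boolean by weak excluded middle, so that $(BB)$ and $(DB)$ hold even though the underlying G\"odel algebra is far from Boolean. Everything else reduces to routine evaluation on the three- or six-element algebras involved.
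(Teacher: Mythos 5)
Your proof is correct, and for the separation parts it takes a genuinely different route from the paper. The paper witnesses the strictness in (i) and (iii) by constructing explicit forest frames (a CJ-forest frame, resp.\ an FSD-forest frame, on the four-element tree) and then computing that the induced operator $\beta$ produces a non-Boolean value, so that the complex algebra fails $(BB)$; and for (v) it equips the three-element chain with a particular non-constant pair of operators and verifies (FS2) case by case. You instead exhibit directly two tiny algebraic witnesses: the three-element chain with $\Box=\Diamond=\mathrm{id}$ (which lies in $\mathbb{FSDGAO}$ but fails $(BB)$ since $m\vee\neg m=m$), settling (i) and (iii) at once, and the same chain with constant operators $\Box\equiv\top$, $\Diamond\equiv\bot$ for (v). Both witnesses check out: the identity operators satisfy $(\Box1)$--$(\Diamond2)$, (D1), (D2), (FS1), (FS2) with equality, and the constant operators satisfy all of these plus $(BB)$ and $(DB)$, while the underlying chain is not Boolean. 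For (ii) and (iv) your argument coincides with the paper's -- same appeal to $\mathbb{BAO}$ lying in both varieties, and same reuse of the Figure~\ref{figDnotFS} example -- except that you make explicit the one point the paper leaves implicit in (iv), namely \emph{why} $(BB)$ and $(DB)$ hold there: all outputs of $\Box$ and $\Diamond$ are (double) negations, hence Boolean by the weak excluded middle $\neg a\vee\neg\neg a=\top$ valid in every G\"odel algebra. What the paper's frame-based examples buy is a tighter connection to the duality machinery of Sections~\ref{sec:tworelations}--\ref{one-relation} (they show concretely that CJ- and FSD-forest frames can generate algebras outside $\mathbb{WGAO}$); what your route buys is economy and independence from that machinery -- every separation reduces to equation checking on a three- or six-element algebra, and your inclusion $\mathbb{WGAO}\subseteq\mathbb{DGAO}$ follows directly from Definition~\ref{defWGAO} rather than via the frame-theoretic Remark~\ref{remCJW}.
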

\begin{proof}
(i) By Remark \ref{remCJW} (i), every W-forest frame satisfies the conditions that characterize Dunn GAOs. Therefore, $\mathbb{WGAO}\subseteq \mathbb{DGAO}$. So as to prove that the inclusion is proper, consider the tree $\bF$ as in Figure \ref{figTree1} with a relation $R=\{(x,y), (y,y), (z, k), (k, z)\}$. Then $(\bF, R)$ is a CJ-forest frame. Indeed, $(\geq \circ R)\subseteq (R\circ \geq)$ as $(\geq\circ R)=R\cup\{(z, y), (k, y)\}$ and $(R\circ \geq)=R\cup\{(y,x), (z, y), (z, x), (k, y), (k, x), (x,x)\}$. Similarly, $(\leq \circ R)\subseteq (R\circ\leq)$, because $(\leq \circ R)=R\cup\{(y, k), (x, k), (y, z), (x, z)\}$ and $(R\circ \leq)=R\cup\{(x, k), (x, z), (y, k), (y,z)\}$. On the other hand, in the corresponding GAO   $({\bf G}(\bF), \beta, \delta)$, whose G\"odel algebra is that one as in Figure \ref{figFSnotD}, one has that $\beta(\{x, y\})=\{x, y\}$ that is not Boolean and hence $({\bf G}(\bF), \beta, \delta)\not\in \mathbb{WGAO}$. 

(ii) follows because Boolean algebra with operators are, at the same time, a proper subvariety of both $\mathbb{WGAO}$ and $\mathbb{FSDGAO}$. 

As for (iii), consider the finite forest $\bF$ being the tree as in Figure \ref{figTree1} and whose associated G\"odel algebra is as in Figure \ref{figFSnotD}.  Let $R$ be the following relation on $F$: $\{(x, x), (x, y), (y, x), (y,y), (y, z), (k, x), (k, y), (k, z), (z, x), (z, y), (z,z)\}$. Then one can check that $(\leq \circ R)\subseteq (R\circ \leq)$ and $(R\circ \geq)=(\geq \circ R)$. In other words $(\bF, R)$ is a FSD-forest frame and hence its associated GAO $({\bf G}(\bF), \beta, \delta)$ belongs to $\mathbb{FSDGAO}$ (recall Subsection \ref{DGAO}). However, $\beta(\{x, y\})=\{x\}$ in $G(\bF)$ and $\{x\}$ is not Boolean. Therefore, $({\bf G}(\bF), \beta, \delta)\not\in \mathbb{WGAO}$ by definition. 

In order to prove that (iv) holds, consider the algebra  of Figure \ref{figDnotFS} and recall that it does not belong to $\mathbb{FSGAO}$. Furthermore, notice that it belongs to $\mathbb{WGAO}$ as it satisfies (BB) and (DB) of Definition \ref{defWGAO}.

Finally, in order to prove (v), i.e., that $\mathbb{BAO}$ is strictly contained in both $\mathbb{WGAO}$ and $\mathbb{FSGAO}$, let us consider the three element G\"odel chain on domain $A=\{\bot, a, \top\}$ together with the following modal operators: $\Box\top=\Diamond\top=\Diamond a=\top$, $\Box\bot=\Box a=\Diamond \bot=\bot$. Obviously $({\bf A}, \Box, \Diamond)$ is not a BAO and it is a WGAO. Let us hence see that $({\bf A}, \Box, \Diamond)$ satisfies (FS2). For all $x, y\in A$ such that $x\leq y$, $x\to y=\top$ and $\Box\top=\top$. Thus, in these cases (FS2) holds. The remaining three cases are the following: (1) $\Box(\top\to a)=\Box a=\bot$ and,  in this case, $\Diamond \top\to \Box a = \top\to \bot=\bot$, whence (FS2) holds; (2) $\Box(a\to \top)=\bot$; (3) $\Box(\top\to \bot)=\bot$. As for (2), notice that $\Diamond a=\top$ and $\Box\bot=\bot$. Thus $\Diamond a\to \Box \bot=\bot$. In case (3), $\Diamond \top=\top$ and hence $\Diamond \top\to \Box\bot=\bot$. Thus, (FS2) holds in $({\bf A}, \Box, \Diamond)$.
\end{proof}

\section{Conclusions and future work}
In this paper we have been concerned with finite G\"odel modal algebras from several varieties and their corresponding classes of forest frames, which are their dual relational structures.  In particular, we have first considered the more basic class of G\"odel algebras with operators (GAOs) where there is no interaction between the operators and proved a representation theorem in terms of algebras defined on the set of downsets of their prime spectra. The dual relational structures based on forests, called forest frames, are defined by two independent binary relations, one per each modal operator, that satisfy (anti)monotonicity properties on the first argument of them. Then we have considered two subvarieties of G\"odel modal algebras where the operators are not independent any longer, namely those satisfying the so-called Dunn's axioms of positive modal logic and Fischer Servi's axioms for intuitionistic modal logic. For these algebras, the associated dual forest frames are basically specified by a single relation that accounts for the relationship among the operators. In this aspect, we have essentially adapted available results in the general case of Heyting algebras with operators \cite{CelJan,Ale,Orlo} to our case and have further investigated on the relations between the additional axioms and properties of the relations in the forest frames. Finally we have considered two further subvarieties of G\"odel modal algebras, the one whose algebras satisfy both Dunn and Fischer Servi axioms and the one  whose modal operators always yield Boolean elements, and their corresponding  forest frames have a single binary relation satisfying both monotonicity and antimonotonicity properties in the first argument.

As for future work, there are at least a couple of interesting issues that  deserve further research. 
A first issue is the relationship between the forest-based relational semantics for G\"odel modal logics, considered in this paper along the line of intuitionistic modal logics, and the $[0, 1]$-valued Kripke models that have been used in other venues like e.g. in \cite{XARO,BEGR, RodVid} following the strand of fuzzy modal logics. It is not clear whether there exists a direct relationship between them. On the one hand, the minimal modal logic complete with respect to $[0, 1]$-valued Kripke models, i.e. the bimodal  G\"odel logic studied in \cite{XARO}, already satisfies the Fischer Servi axioms. So it seems that $[0, 1]$-valued Kripke models only account for G\"odel modal logics  satisfying both Fischer Servi axioms. Thus, for instance $[0, 1]$-valued Kripke models seems not to be a semantics for the logic of our GAOs algebras nor  those extended with axioms (D1), (BB) and (DB).

A second issue is to extend our research on G\"odel modal algebras over related  algebraic structures with nice duality theories. A clear candidate is the variety of Nilpotent Minimum algebras (NM-algebras for short) that, similarly to G\"odel algebras, is dual to a forest-based category (see \cite{BuCi}). Another class of algebras that is dual to forests is that of IUML-algebras, studied in \cite{ABCG}. The latter are algebras based on uninorms rather than  t-norms, but similar methods to those developed in the present chapter might be used to define and study modal operators on them.

%
%


\end{document}